\documentclass[11pt]{article}
\usepackage{hyperref}

\usepackage[english]{babel}
\usepackage[left=0.5in,right=0.5in,top=1in,bottom=1.5in]{geometry}
\usepackage{amssymb,amsmath,amsthm,amscd,ifthen}
\usepackage{mathtools}
\mathtoolsset{showonlyrefs}
\usepackage{graphicx}
\usepackage{xcolor}
\definecolor{ultramarine}{rgb}{0.07, 0.04, 0.56}
\definecolor{darkspringgreen}{rgb}{0.09, 0.45, 0.27}
\hypersetup{colorlinks,linkcolor={darkspringgreen},citecolor={ultramarine},urlcolor={ultramarine}}
\usepackage{bbm}
\usepackage{dsfont}
\usepackage{comment}
\usepackage{framed}
\usepackage{transparent}
\usepackage{color}
\usepackage{dirtytalk}
\usepackage{authblk}
\usepackage{framed}
\usepackage[authoryear]{natbib}
\bibpunct{[}{]}{,}{n}{,}{,}
\bibliographystyle{alpha}
\usepackage[shortlabels]{enumitem}
\usepackage{soul} 


\newcommand{\ip}[2]{\ensuremath{\left\langle {#1}, {#2} \right\rangle}}

\newcommand{\what}{\ensuremath{\widehat{\omega}}}



\makeatletter
\def\BState{\State\hskip-\ALG@thistlm}
\makeatother

\newcommand{\R}{{\mathbb R}}

\DeclareMathOperator{\E}{\mathbb{E}}
\DeclareMathOperator{\ind}{\mathds{1}}

\newtheorem*{theorem*}{Theorem}

\DeclareMathOperator*{\argmin}{arg\,min}
\DeclareMathOperator*{\arginf}{arg\,inf}
\newcommand{\inr}[1]{\left\langle #1 \right\rangle}

\definecolor{cd60952}{RGB}{214,9,82}

\date{}

\newtheorem{Theorem}{Theorem}[section]
\newtheorem{Lemma}[Theorem]{Lemma}

\newtheorem{Proposition}[Theorem]{Proposition}

\newtheorem{Example}[Theorem]{Example}


\DeclareMathOperator{\Tr}{Tr}

\DeclarePairedDelimiter{\norm}{\lVert}{\rVert}
\DeclarePairedDelimiter{\abs}{\lvert}{\rvert}

\newcommand{\myparagraph}[1]{\paragraph*{#1.}}

\title{Suboptimality of Constrained Least Squares and Improvements via Non-Linear Predictors}
\author[1]{Tomas Va\v{s}kevi\v{c}ius}
\author[2]{Nikita Zhivotovskiy}
\affil[1]{Department of Statistics, University of Oxford, \href{mailto:tomas.vaskevicius@stats.ox.ac.uk}{tomas.vaskevicius@stats.ox.ac.uk}}
\affil[2]{Department of Mathematics, ETH Z\"{u}rich, \href{mailto:nikita.zhivotovskii@math.ethz.ch}{nikita.zhivotovskii@math.ethz.ch}}

\begin{document}

\maketitle

\abstract{
We study the problem of predicting as well as the best linear predictor in a bounded Euclidean ball with respect to the squared loss. When only boundedness of the data generating distribution is assumed, we establish that the least squares estimator constrained to a bounded Euclidean ball does not attain the classical $O(d/n)$ excess risk rate, where $d$ is the dimension of the covariates and $n$ is the number of samples. In particular, we construct a bounded distribution such that the constrained least squares estimator incurs an excess risk of order $\Omega(d^{3/2}/n)$ hence refuting a recent conjecture of Ohad Shamir [JMLR 2015]. In contrast, we observe that non-linear predictors can achieve the optimal rate $O(d/n)$ with no assumptions on the distribution of the covariates. We discuss additional distributional assumptions sufficient to guarantee an $O(d/n)$ excess risk rate for the least squares estimator. Among them are certain moment equivalence assumptions often used in the robust statistics literature. While such assumptions are central in the analysis of unbounded and heavy-tailed settings, our work indicates that in some cases, they also rule out unfavorable bounded distributions. }

\section{Introduction}

We study random design linear regression under boundedness assumptions on the data generating distribution. Let $S_{n}$ denote a sample of $n$ i.i.d.\ input-output pairs $(X_{i}, Y_{i}) \in \R^{d} \times \R$ sampled from some unknown distribution $P$. In a traditional statistical learning theory setup, the aim of a (linear) learning algorithm is to map the observed learning sample $S_{n}$ to a linear predictor $\langle \what, \cdot \rangle$ that incurs a small \emph{risk} $R(\what) = \E(Y - \langle \what, X\rangle)^2$, where the random pair $(X, Y)$ is distributed according to $P$. In this paper, we analyze the performance of least squares (or empirical risk minimization (ERM)) estimators constrained to bounded Euclidean balls.

As a motivating example, consider the well-specified model 
$Y = \langle \omega^{*}, X \rangle + \xi.$
Here $\omega^{*} \in \R^{d}$, $\xi$ is zero mean and independent of $X$; we always assume that $Y$ is square integrable and that the covariance matrix $\Sigma$ of $X$ exists. Assuming additionally that $n \geq 2d$, $\xi$ is Gaussian and that $X$ is zero mean multivariate Gaussian with invertible covariance matrix $\Sigma$, a basic result \citep[Theorem 1.1]{breiman1983many} implies that the \emph{excess risk} of unconstrained least squares $\what$  (also known as the ordinary least squares estimator) satisfies
\begin{equation}
  \label{eq:well-specified-excess-risk}
  \E R(\what) - R(\omega^{*}) \lesssim \frac{dR(\omega^{*})}{n},
\end{equation}
where the expectation is taken with respect to the sample $S_{n}$, the notation $\lesssim$ suppresses absolute multiplicative constants, and the optimal risk $R(\omega^{*})$ is equal to the variance of the noise random variable $\xi$.
Remarkably, the bound \eqref{eq:well-specified-excess-risk} depends neither on the exact form of the covariance matrix $\Sigma$ nor on the magnitude of $\omega^{*}$.
Recent work \citep[Theorem 1]{mourtada2019exact} shows that if the model is well-specified then for any distribution of the covariates $X$ such that the sample covariance matrix is almost surely invertible and any $n \ge d$, the excess risk of unconstrained least squares is \emph{exactly} equal to the minimax risk. While the above result attests to the existence of regimes where least squares is a statistically optimal estimator in a minimax sense, there is a growing interest in the statistics and machine learning communities in understanding the robustness of statistical estimators to  various forms of model \emph{misspecification}. For instance, the regression function $\E(Y \mid X = \cdot)$ might be non-linear, or the distribution of the noise random variable $\xi$ might depend on the corresponding covariate $X$. 

Many authors have matched the $O(d/n)$ rate \eqref{eq:well-specified-excess-risk} for ERM-based algorithms under significantly less restrictive assumptions than that of a well-specified model with Gaussian covariates (e.g., assuming a favorable covariance structure and sub-Gaussian noise \citep{hsu2014random}, assuming $L_{q}$--$L_2$ (for some $q > 2$) moment equivalence of the marginals $\langle \omega, X \rangle$ and the noise random variable $\xi$ \citep{audibert2011robust, oliveira2016lower, catoni2016pac, mourtada2019exact}, or the weaker small-ball assumption \citep{Mendelson:2015:LWC:2799630.2699439, lecue2016performance}). Moment equivalence type assumptions allow for modelling heavy-tailed distributions, and, in particular, they have played a crucial role in recent developments in the robust statistics literature (e.g., \citep{catoni2016pac, chinot2019robust, lugosi2016risk, mendelson2020robust}); however, in some cases, such assumptions only hold with constants that can deteriorate arbitrarily with respect to the parameters of the unknown distribution $P$, even for light-tailed or bounded distributions. For instance, the smallest constant with respect to which $\operatorname{Bernoulli}(p)$ distribution satisfies the $L_{4}$--$L_{2}$ moment equivalence can get arbitrarily large for small $p$. In the context of linear regression, the work \citep[a discussion following Proposition 4.8]{catoni2016pac} highlights that some of the prior results on the performance of least squares relying on such assumptions can have constants that may unintentionally depend on the dimension of the covariates $d$. Recent literature has further accentuated this problem and witnessed an emerging interest in refining moment equivalence and small-ball assumptions \citep{saumard2018optimality, chinot2019robust, mendelson2017extending}. 

While the moment equivalence assumptions allow us to study unbounded and possibly heavy-tailed problems, such assumptions might impose undesirable structural constraints on the unknown distribution $P$ and, in some cases, result in overly optimistic bounds, as our work suggests. In this work we take an alternative point of view, frequently adopted in the aggregation theory literature \citep{nemirovski2000topics, tsybakov2003optimal, juditsky2008learning, audibert2009fast, lecue2013empirical, rakhlin2017empirical}: we impose no assumptions on the distribution $P$ other than boundedness and aim to obtain a (possibly non-linear) predictor  that performs at least as well as the best linear predictor in  $\mathcal{W}_{b} = \{\omega \in \R^{d} : \|\omega\| \leq b \}$, where $\|\cdot\|$ denotes the Euclidean norm. We remark that some distributional assumptions need to be made since otherwise, any algorithm that returns a linear predictor (including least squares) can incur arbitrarily large excess risk (cf.\ the lower bounds in \citep{shamir2015sample, lecue2016performance, catoni2016pac}).

Let $\what_{b} = \what_{b}(S_{n})$ denote a \emph{proper} estimator which corresponds to a linear function $\langle \what_{b}, \cdot \rangle$ for some
$\what_{b} \in \mathcal{W}_{b}$.
Otherwise, the estimator is called \emph{improper}. Fix any proper estimator $\what_{b}$ and any constants $r, m > 0$. The recent work \citep{shamir2015sample} shows that there exists a distribution $P = P(\what_{b}, r, m)$ satisfying $\| X\| \leq r$ almost surely and $\|Y\|_{L_{\infty}(P)} \leq m$ such that the following lower bound holds for any large enough sample size $n$ (see Section~\ref{sec:corollaries} for a more general statement):
\begin{equation}
  \label{eq:shamir-simplified}
  \E R(\what_{b}) - \inf\limits_{\omega \in \mathcal{W}_{b}}R(\omega)
  \gtrsim \frac{dm^2}{n} + \frac{r^2b^2}{n}.
\end{equation}
Note that the first term in the above lower bound corresponds to the rate in the upper bound \eqref{eq:well-specified-excess-risk}: the excess risk of a best predictor in class $\mathcal{W}_{b}$ is upper bounded by that of a zero function, whose risk is in turn bounded by $m^{2}$. On the other hand, the second term in \eqref{eq:shamir-simplified} shows that in the absence of simplifying distributional assumptions, the statistical performance of linear predictors can deteriorate arbitrarily with respect to the boundedness constants $r,b$, even in one-dimensional settings; in contrast, the upper bound \eqref{eq:well-specified-excess-risk} does not depend on $b$ and $r$.

Imposing only boundedness constraints on $P$, we study excess risk bounds of least squares performed over the class $\mathcal{W}_{b}$, denoted in what follows by $\what_{b}^{\operatorname{ERM}}$. A baseline for our work is a conjecture proposed in \citep{shamir2015sample} postulating the statistical optimality of the constrained least squares estimator $\what^{\operatorname{ERM}}_{b}$ in a sense that it matches the lower bound \eqref{eq:shamir-simplified}. For some of the recent discussions and attempts to resolve this conjecture see, for example, the works \citep*{koren2015fast, balazs2016chaining, gonen2017average, Wang2018RevisitingDP}.
The existing results, however, only partially address this conjecture, 
restricting to the regimes where $br \sim m$ (the notation $a \sim b$ means $a \lesssim b \lesssim a$). Specifically, the best known guarantees that can be obtained, for instance, via localized Rademacher complexity arguments \citep{bartlett2005local, koltchinskii2006local, liang2015learning} yielding the following upper bound
\begin{equation}
  \label{eq:excess-risk-suboptimal-b-plus-m}
  \E R(\what^{\operatorname{ERM}}_{b}) - \inf\limits_{\omega \in \mathcal{W}_{b}}R(\omega)
  \lesssim
  \frac{dm^2}{n} + d\cdot\frac{r^2b^2}{n},
\end{equation}
We note that an overlooked aspect of the work \citep{shamir2015sample} is that the lower-bound \eqref{eq:shamir-simplified} proved for \emph{proper} algorithms is matched there via the \emph{improper} Vovk-Azoury-Warmuth (VAW) forecaster. Among the proper algorithms, least squares is arguably the most natural and most extensively studied candidate that could potentially match the lower bound \eqref{eq:shamir-simplified} (as conjetured by Shamir). Thus, a natural reformulation of Shamir's conjecture arises:
\begin{framed}
    Provided that the covariate vectors and the response variable are bounded almost surely, is the constrained least squares estimator $\what_{b}^{\operatorname{ERM}}$ optimal among all (potentially non-linear) estimators in a sense that it always matches the lower bound \eqref{eq:shamir-simplified}?
\end{framed}

We address this question by showing that there exist bounded distributions inducing a multiplicative $\sqrt{d}$ gap between the excess risk achievable by the constrained least squares estimator $\what_{b}^{\operatorname{ERM}}$ and that achievable via non-linear predictors. It is important to highlight that this statistical gap holds despite performing ERM over a \emph{convex} and \emph{bounded} function class with respect to the \emph{squared loss}, a setting considered to be favorable in the literature (see, e.g.,  \citep[Chapter 5]{koltchinskii2011oracle}). In particular, the so-called \emph{Bernstein class condition} (see\ \citep{bartlett2006empirical}) is always satisfied in our setup, which is known to imply fast rates for least squares in the bounded setup whenever the underlying class is not too complex. Our work identifies a contrasting scenario: we find that the least squares algorithm is suboptimal for a convex problem and as such, the failure of the least squares procedure cannot be attributed to complex/non-convex structure of the underlying class. Instead, we identify the localized multiplier process as the complexity measure giving rise to unfavorable distributions; see Sections~\ref{sec:upper-bounds} and \ref{sec:lower-bound} for an extended discussion. We now state an informal version of our main result.

\begin{theorem*}[An informal statement]
    There exists a distribution $P$ satisfying $\|X\| \le 1$ almost surely and $\|Y\|_{L_{\infty}} \le 1$ (i.e., $r=m=1$) such that for any large enough $d$, $b \sim \sqrt{d}$ and large enough $n$ the following holds:
    \begin{equation}
        \label{eq:lower-bound-erm-informal}
        \E R(\what_{b}^{\operatorname{ERM}}) - \inf\limits_{\omega \in \mathcal{W}_{b}}R(\omega)
        \gtrsim \frac{d^{3/2}}{n}
        \sim \sqrt{d} \cdot \left(\frac{dm^2}{n} + \frac{r^2b^2}{n}\right).
    \end{equation}
    At the same time, there exists a non-linear predictor $\widehat{f}(\cdot)$ such that for any boundedness constant $m > 0$, any distribution $P$ (with no assumptions on the distribution of the covariates) satisfying $\|Y\|_{L_{\infty}} \le m$, and any $d, n > 0$, the following holds:
    \begin{equation}
    \label{eq:upper-bound-vovk-informal}
        \E R(\widehat{f}) - \inf\limits_{\omega \in \mathbb{R}^d}R(\omega)
        \lesssim \frac{dm^{2}}{n}.
    \end{equation}
\end{theorem*}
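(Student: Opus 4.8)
The statement splits into two unrelated parts, which I treat separately: an existence statement about a single bad bounded distribution for the specific proper estimator $\what_b^{\operatorname{ERM}}$ (the lower bound \eqref{eq:lower-bound-erm-informal}), proved via the variational description of constrained least squares together with a carefully engineered construction; and an existence statement about an improper predictor under minimal assumptions (the upper bound \eqref{eq:upper-bound-vovk-informal}), proved by reduction to a finite-dimensional aggregation problem.

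\textbf{The lower bound.} Let $\omega^*$ minimize the population risk over $\mathcal W_b$ and set $\xi_i:=Y_i-\langle\omega^*,X_i\rangle$. The constrained normal equations force the estimation error $v:=\what_b^{\operatorname{ERM}}-\omega^*$ into the tangent cone of $\mathcal W_b$ at $\omega^*$ and to satisfy $\|v\|_{\widehat\Sigma}^2\lesssim\bigl|\tfrac1n\sum_i\xi_i\langle X_i,v\rangle\bigr|$, so that --- up to the benign quadratic-process term and absolute constants --- the excess risk $\E R(\what_b^{\operatorname{ERM}})-\inf_{\mathcal W_b}R$ is comparable to the squared fixed point of the localized multiplier process $\rho\mapsto\E\sup\{\,\bigl|\tfrac1n\sum_i\xi_i\langle X_i,v\rangle\bigr|:\|v\|_\Sigma\le\rho,\ v\text{ feasible}\,\}$. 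For benign (homoscedastic, isotropic) $P$ this fixed point is $\Theta(d/n)$; the task is to exhibit a bounded $P$ for which it is $\Theta(d^{3/2}/n)$. I would build it from a calibrated combination of (i) anisotropy of $\Sigma$, with a block of small-eigenvalue directions along which the covariate can occasionally have nearly full length, and (ii) heteroscedasticity of $\xi$, arranging the conditional law of $Y$ so that $|\xi|$ is atypically large --- of order $\sqrt d$ --- on a rare event and negligible otherwise. This is about the largest heteroscedasticity compatible with $\|X\|\le1$, $|Y|\le1$ and with $\omega^*$ remaining the in-class minimizer (since $R(\omega^*)\le R(0)\le m^2$ caps the probability of such an event by $\sim1/d$), and $b\sim\sqrt d$ is precisely the scale at which the ball constraint becomes active and couples to this heteroscedasticity, inflating $\operatorname{tr}(\Sigma^{-1}\E[\xi^2XX^\top])$ and hence the multiplier fixed point up to the target order $d^{3/2}/n$.

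\textbf{The upper bound.} Here the first move is that truncation is free: since $|Y|\le m$ almost surely, for every predictor $g$ one has $(\operatorname{clip}_{[-m,m]}(g(x))-y)^2\le(g(x)-y)^2$ pointwise on the support, so it suffices to compete against the class $\mathcal G=\{x\mapsto\operatorname{clip}_{[-m,m]}(\langle\omega,x\rangle):\omega\in\R^d\}$, and $\inf_{\mathcal G}R\le\inf_{\omega\in\R^d}R(\omega)$. Two things are thereby gained. First, every member of $\mathcal G$ is bounded by $m$, which removes any dependence on the uncontrolled magnitudes of $X$ and of the optimal $\omega$, and the residual $Y-\operatorname{clip}_{[-m,m]}(\langle\omega^*,X\rangle)$ of the best element is bounded by $2m$, so the heteroscedastic blow-up that defeats the linear ERM in the lower bound is absent. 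Second, $\mathcal G$ is parametric --- a fixed $1$-Lipschitz nonlinearity composed with a $d$-dimensional linear family --- hence has pseudo/fat-shattering dimension $O(d)$ at every scale. One then produces $\widehat f$ by running an aggregation procedure over a data-dependent discretization of $\mathcal G$ (a $Q$-aggregation / star-type estimator, or progressive mixture), for which the exact-oracle-inequality machinery of the aggregation literature \citep{nemirovski2000topics, tsybakov2003optimal, juditsky2008learning, audibert2009fast, lecue2013empirical, rakhlin2017empirical} delivers, for bounded parametric classes and with an absolute constant and no logarithmic overhead, $\E R(\widehat f)-\inf_{g\in\mathcal G}R(g)\lesssim dm^2/n$; combined with the previous display this is the claim, uniformly in $d$ and $n$ and with no assumption on the law of $X$.

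\textbf{Main obstacle.} The upper bound is, modulo the truncation observation, a reduction to aggregation-theoretic tools; the real difficulty is the lower bound, and specifically its \emph{lower} direction. One must prove an anti-concentration / lower bound for the localized multiplier process on the engineered heavy-tailed, heteroscedastic distribution --- the reverse of the familiar upper bounds on empirical-process suprema --- while simultaneously checking that the quadratic-process term stays subdominant there, that $\widehat\Sigma$ is well enough conditioned in the relevant range of $n$, and that $\|X\|\le1$, $|Y|\le1$, $\|\omega^*\|\le b\sim\sqrt d$ and the genuine optimality of $\omega^*$ in $\mathcal W_b$ all hold at once with absolute constants. Turning the heuristic ``excess risk $\asymp$ multiplier fixed point'' into a rigorous two-sided bound for $\E R(\what_b^{\operatorname{ERM}})$ on this particular $P$ is where the work concentrates.
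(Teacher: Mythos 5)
What you have written is a plan, not a proof, and the missing step is exactly the one that carries all the difficulty. Your reduction gives only the \emph{upper} direction (excess risk $\lesssim$ localized multiplier fixed point, as in Theorem~\ref{thm:main_thm}); the claim that the excess risk of $\what_b^{\operatorname{ERM}}$ is \emph{at least} of the order of that fixed point on your engineered distribution is nowhere established, and you yourself defer it as the ``main obstacle.'' The paper does not prove such a two-sided equivalence: instead it writes down one explicit distribution \eqref{eq:bad-distribution}, computes the least squares solution in closed form via the Sherman--Morrison formula (Lemma~\ref{lemma:erm-expressions}), shows by matrix Chernoff/Bernstein bounds that the unconstrained solution has norm $\lesssim\sqrt d$ (so it coincides with the constrained one) and has a component of size $\gtrsim d^{3/2}/\sqrt n$ along the sampling-noise direction $\zeta$, and converts this into excess risk through the curvature bound $R(\omega)-R(\omega^*_b)\ge \tfrac12 d^{-3/2}\norm{\omega-\omega^*_b}^2$ (Lemma~\ref{lem:first_lemma}). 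Moreover, the parameters you sketch do not produce the $d^{3/2}/n$ scaling. If $\abs{\xi}\sim\sqrt d$ on an event of probability $\sim 1/d$, then since $\abs{Y}\le 1$ and $\norm{\omega^*_b}\le b\sim\sqrt d$, Cauchy--Schwarz forces the rare covariates to have a constant-size projection on $\omega^*_b/\norm{\omega^*_b}$. Either the rare covariates are essentially aligned with that single direction, in which case their total leverage is $O(1)$ and the multiplier term is only $O(d/n)$; or you give them spread-out orthogonal components to create leverage, in which case a short computation with $\Sigma^{-1}\E[YX]$ shows the unconstrained population minimizer is pushed to norm of order $d$, so $\omega^*_b$ lands on the boundary of $\mathcal W_b$ and the whole ``ERM versus interior minimizer'' picture you rely on collapses. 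The paper's calibration is different and deliberate: $\xi^2\sim\sqrt d$ (i.e.\ $\abs{\xi}\sim d^{1/4}$) with probability $d^{-1/2}$, carried by sparse covariates $d^{-1/4}\mathbf 1_S$, which keeps $\omega^*_\infty\approx\tfrac12\mathbf 1$ inside the ball while giving each rare point leverage $\sim d^{3/2}/n$.

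\textbf{On the upper bound.} The clipping observation is sound, but the key claim --- that exact-oracle aggregation over a (data-dependent) discretization of the clipped-linear class yields $dm^2/n$ ``with no logarithmic overhead'' and no assumption on the law of $X$ --- is not an off-the-shelf theorem. Finite-class model-selection aggregation gives $m^2\log M/n$, and any $\epsilon$-net of the $d$-parameter clipped class has $\log M\gtrsim d\log(m/\epsilon)$; balancing $\epsilon^2$ against $dm^2\log(m/\epsilon)/n$ leaves a $\log(n/d)$ factor, on top of the issue of constructing a net in $L_2(P_X)$ without knowing $P_X$ (sample splitting only pushes this into further empirical-vs-population comparisons). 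Removing that logarithm distribution-free is precisely the nontrivial content of \eqref{eq:upper-bound-vovk-informal}; the paper obtains it from the Forster--Warmuth modification of the VAW forecaster, whose proof is an exact leave-one-out identity (via the Moore--Penrose/Sherman--Morrison algebra) showing each leave-one-out loss exceeds the in-sample loss of the minimum-norm ERM by at most $2h_iY_i^2$, and then summing $\sum_i h_i\le d$. So both halves of your proposal need a genuinely new argument beyond what you cite, and in both cases the paper's actual route is an explicit algebraic computation rather than a generic empirical-process or aggregation reduction.
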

In particular, the lower bound \eqref{eq:lower-bound-erm-informal} resolves the conjecture of Ohad Shamir on the optimality of $\what_{b}^{\operatorname{ERM}}$ in the negative. The distribution $P$ used to prove the lower bound relies on a mixture of dense and sparse covariate vectors, with the majority of the samples having dense covariates; intuitively, the sparse covariates have high statistical leverage which in turn forces least squares to overfit on a small subset of the observed data. 

The construction of such a distribution is guided by our upper bounds presented in Section~\ref{sec:upper-bounds}, where we prove a refined version of the upper bound \eqref{eq:excess-risk-suboptimal-b-plus-m} that allows us to get rid of the excess factor $d$ appearing in the second term of \eqref{eq:excess-risk-suboptimal-b-plus-m}, while replacing the first term with a quantity that appears in minimax lower bounds for the well-specified model. In Section~\ref{sec:corollaries}, we demonstrate that some additional assumptions, such as $L_{4}$--$L_{2}$ moment-equivalence of the marginals $\langle \omega, X \rangle$ and the noise, are enough to ensure that the constrained least squares estimator matches the lower bound \eqref{eq:shamir-simplified}. An emerging picture is that there exist scenarios (namely, when statistical leverage scores are suitably correlated with the noise as we discuss in Section~\ref{sec:lower-bound}) under which the performance of least squares is sensitive to the constants with respect to which such assumptions are satisfied. Consequently, unfavorable distributions exist even when both the covariates and the response variables are bounded. Our lower bound \eqref{eq:lower-bound-erm-informal} is proved by constructing one such distribution with the moment equivalence constants proportional to $\sqrt{d}$.  Rather than viewing our lower bound as an isolated case, we note that there exists a spectrum of bounded distributions, ranging from the ones that satisfy moment equivalence conditions with absolute constants, to the ill-behaved ones used to prove our theorem above.

On the positive side, our work unveils the potential statistical improvements offered by non-linear predictors. In the theorem statement above, $\widehat{f}$ denotes a modified Vovk-Azoury-Warmuth forecaster \citep{vovk2001competitive, azoury2001relative} introduced in \citep{forster2002relative}. In addition to the $\sqrt{d}$ gap between \eqref{eq:lower-bound-erm-informal} and \eqref{eq:upper-bound-vovk-informal}, the non-linear predictor $\widehat{f}$ surpasses the lower bound \eqref{eq:shamir-simplified} that holds for \emph{any} algorithm returning a linear predictor in $\mathcal{W}_{b}$. In particular, it removes the dependence on the boundedness constants $b$ and $r$ appearing in the lower bound \eqref{eq:shamir-simplified} completely removing \emph{any} dependence on the distribution of the covariates. 

Regarding the terminology, we note the word pairs \emph{proper} and \emph{linear}, as well as \emph{improper} and \emph{non-linear} are used synonymously in this work. Thus, the separation between proper and improper learning is not to be confused with the separation between linear and non-linear \emph{procedures} considered in other works. For example, the work \citep{donoho1998minimax} finds a statistical gap between minimax rates achievable by procedures that are linear and non-linear in the observed response variables. In contrast, in our case, the linearity/properness means that an estimator always selects a predictor of the form $x \mapsto \langle x, w\rangle$ for some $w \in \mathcal{W}_{b}$, while non-linear/improper predictor is allowed to select predictors that are not of the above form.

Finally, we highlight that the construction used to prove our main lower bound \eqref{eq:lower-bound-erm-informal} is specifically designed to simultaneously satisfy moment equivalence assumptions (on the noise and the design) with ill-behaved constants. The fact that non-linear estimators can always achieve the optimal $d/n$ rate shows that they are insensitive to moment equivalence constants. Thus, the observed statistical gap between proper and improper estimators is of interest to the line of work investigating  estimators (typically proper) robust to heavy-tailed data. Indeed, following the first appearance of the preprint of this work, this observation has motivated the design of a non-linear statistical estimator robust to heavy-tailed data that works without any assumptions on the distribution of the covariates \citep{mourtada2021distribution}.

\subsection{Related Work}
\label{sec:related-work}
\myparagraph{Linear regression} Ordinary least squares and its variations (e.g., ridge regression or constrained least squares) have been studied extensively in the literature, with most of the results primarily focusing on the upper bounds. Many variations of our problem were previously considered in the literature (e.g., fixed-design regression, distributional assumptions different from boundedness or performance metrics that differ from the excess risk); see \citep{shamir2015sample} for a detailed comparison of different setups. For comprehensive surveys of existing work, we refer to \citep{audibert2010linear, audibert2011robust, hsu2014random, mourtada2019exact} and the books \citep{gyorfi2002distribution, koltchinskii2011oracle, wainwright2019high}. The lower bound \eqref{eq:shamir-simplified} is due to \citep{shamir2015sample}, and it is the tightest lower bound in the literature that covers the setting considered in our work. The best known upper bound in such a setting is of the form given in \eqref{eq:excess-risk-suboptimal-b-plus-m}; the gap between \eqref{eq:shamir-simplified} and \eqref{eq:excess-risk-suboptimal-b-plus-m} is currently not addressed in the literature.

Many of the existing upper bounds in the literature hold with high probability. In contrast, we focus on establishing suboptimality of constrained least squares and demonstrating a form of statistical separation between proper and improper estimators; thus, we concentrate on in-expectation analysis to convey our main findings without introducing additional technicalities. In the bounded regime, our upper bound for constrained least squares can be translated into high-probability results via standard arguments based on Talagrand's concentration inequality for empirical processes (see discussions in the book \citep{koltchinskii2011oracle}). At the same time, our upper bounds for ridge regression and Vovk-Azoury-Warmuth estimators are based on stability and online-to-batch arguments, respectively, neither of which easily generalizes to a high-probability counterpart.

\myparagraph{(Sub)optimality of ERM} Understanding statistical guarantees pertaining to estimators based on ERM has been a subject of intense study in many contexts. Among the simplest problems where ERM is known to incur suboptimal excess risk rates is the \emph{model selection aggregation} \citep{nemirovski2000topics, tsybakov2003optimal}, where the aim is to predict as well as the best function in a given finite class of bounded functions of size $M$. It is well-known that the \emph{non-convex} structure of finite classes inhibits the excess risk achievable by any proper estimator (including ERM over the class); in particular, proper estimators can only achieve a $\sqrt{\log M/{n}}$ rate as opposed to the optimal $\log M/n$ rate achievable by improper estimators \citep{juditsky2008learning}. Among the similarities between the aggregation setup and our work is that optimal rates are often achieved via procedures taking their roots in the sequential prediction setup (in our case, the Vovk-Azoury-Warmuth forecaster) \citep{audibert2009fast}. A key difference, however, is the fact that our results establish suboptimality of ERM for a \emph{convex} and bounded class, albeit with respect to the dependence on $d$ instead of $n$.

Constrained ERM with the squared loss is also actively studied in an on-going line of work concerning the shape restricted regression literature (e.g., \citep{chatterjee2014new, chatterjee2015risk, bellec2018sharp, han2019isotonic}), where the least squares projection is performed over constraint sets that may be significantly more complex than Euclidean balls. In particular, when considering some expressive nonparametric classes of functions, ERM can be either optimal \citep{han2019isotonic} or suboptimal \citep{birge1993rates, pmlr-v125-kur20a}, depending on some additional properties of these classes. In contrast, our results establish suboptimality of the constrained least squares estimator for a parametric class that has a small intrinsic complexity. The work \citep{chatterjee2014new} allows more general convex constraints and shows that ERM can be rate suboptimal. However, establishing suboptimality of ERM in our setting is more complicated: we are not free to choose an arbitrary ill-behaved convex constraint set and also, we study a random design setting and thus cannot choose a fixed set of ill-behaved covariates $X_i$. We additionally refer to \citep{birge2006model} for an extensive discussion on optimality and suboptimality of least squares and maximum likelihood estimators in different setups. 

A phenomenon separating statistical performance achievable by proper and improper estimators, related to the one observed in our work for linear regression, has recently attracted considerable attention in the logistic regression literature. Consider the setting of online logistic regression over a bounded Euclidean ball $\mathcal{W}_{b}$ in $\mathbb{R}^d$ and denote the number of rounds by $t$ (for the background on the setup see \citep{pmlr-v75-foster18a}). In this case, the cumulative regret of the online Newton step algorithm \citep{hazan2007logarithmic} is of order $e^{b}d\log t$. The exponential dependence on $b$ lead to a question formulated in \citep{mcmahan2012open} asking whether there exists an algorithm with logarithmic regret but polynomial dependence on the radius $b$ of the constraint set. The work \citep{hazan2014logistic} shows that such an algorithm does not exist in the class of proper estimators; the work \citep{pmlr-v75-foster18a} provides an improper algorithm that attains a cumulative regret guarantee of order $d\log(bt)$ with a doubly-exponential improvement in the dependence on $b$.

\subsection{Summary of our Techniques and Results}
As mentioned above, the upper bound \eqref{eq:excess-risk-suboptimal-b-plus-m} can also be readily obtained via the classical localized Rademacher complexity arguments \citep*{bartlett2005local, koltchinskii2006local} or the more recently introduced offset Rademacher complexity \citep*{liang2015learning}. Crucially, the lower bound \eqref{eq:shamir-simplified} and the upper bound \eqref{eq:excess-risk-suboptimal-b-plus-m} differ by a factor of $d$ in the worst case. It appears that the suboptimal dependence on the boundedness constants $br$ and $m$ arises due to an application of the Ledoux-Talagrand contraction inequality \citep{ledoux1991probability}. In particular, when $\norm{Y}_{L_{\infty}(P_{r})} \leq m$, the quadratic loss is $2(br + m)$-Lipschitz on $\mathcal{W}_{b}$, and the constant $(br + m)^{2}$ propagates into the resulting upper bounds. It is well-known that in the context of unbounded distributions, the application of the contraction argument can yield suboptimal bounds \citep{Mendelson:2015:LWC:2799630.2699439}; our work shows that the same is true in the classical bounded setup. In order to avoid the contraction step, we base our analysis on two components. First, we reduce our proof to the analysis of the \emph{localized multiplier and quadratic processes} as in \citep{lecue2013learning, Mendelson:2015:LWC:2799630.2699439}. Second, we use a version of Rudelson's inequality \citep{rudelson1999random} for sums of rank-one operators due to Oliveira \citep{oliveira2010sums} to analyze the localized quadratic process. We believe that our approach can also be used in the case of unbounded distributions: there is a version of Oliveira's bound for unbounded matrices \citep{klochkov2020uniform} that can be used to replace the assumption $\|X\| \le r$ with a strictly weaker sub-Gaussian tail assumption on the norms $\|X\|$; this could be seen as a step towards incorporating unbounded distributions within our framework while not relying on moment equivalence assumptions discussed above.

Using the notion of average stability, we prove a tight excess risk upper bound for the ridge regression estimator (see \citep{shalev2014understanding, koren2015fast, gonen2017average} for a detailed account of stability in our context). The novel ingredient in our proof is the exploitation of the curvature of the squared loss in the \emph{stability-fitting trade-off}. As a result, we show that the ridge estimator does not suffer from an excess factor $\log (\min\{n, d\})$ that appears in an upper bound on the localized quadratic process. Moreover, we demonstrate that this logarithmic term is unavoidable for least squares in some regimes, thus showing an interesting performance gap between constrained and penalized least squares.

Finally, the proof of our main lower bound, presented in Section~\ref{sec:lower-bound}, relies on a combination of some delicate exact computations and multiple applications of the matrix Chernoff and Bernstein inequalities \citep{tropp15}. Technical difficulties aside, the main challenge in proving our lower bound is constructing the example distribution used to establish that constrained least squares is suboptimal. In addition to the restrictions imposed by the boundedness constraints, we discuss other distributional assumptions sufficient to ensure that constrained least squares matches the rate \eqref{eq:shamir-simplified} (see \ Section~\ref{sec:corollaries}), thus making our main lower bound impossible. The intuition behind our construction is rooted in the form taken by our upper bounds. In particular, we construct a distribution tailored to make the localized multiplier process ill-behaved by simultaneously violating moment equivalence assumptions on the noise and statistical leverage score distributions. See Section~\ref{sec:lower-bound} for more details.

Below is a summary of our main contributions.
\begin{enumerate}
\item
  In Theorem~\ref{thm:main_thm}, we prove a tight upper bound on the expected excess risk of any constrained least squares estimator $\what_{b}^{\operatorname{ERM}}$. We demonstrate that the localized multiplier process is equal to the correlation between the squared noise and the statistical leverage scores of the covariates, a quantity that appears in minimax lower bounds in the well-specified case with independent noise \citep{mourtada2019exact}. In Proposition~\ref{prop:lowerbound}, we construct a distribution for which there exists a least squares solution $\what_{b}^{\operatorname{ERM}}$ whose excess risk is lower-bounded by the localized quadratic process.  Hence, both terms appearing in our upper bound are not improvable in general.

\item
   In Theorem~\ref{thm:main_thm_second}, we prove a tight excess risk upper bound for the ridge regression estimator. We recover the excess risk upper bound proved for constrained least squares, with the localized quadratic term replaced by a bias term that yields a logarithmic improvement.

\item
  Section~\ref{sec:corollaries} is dedicated to corollaries of our upper bounds. We show that under some assumptions frequently considered in the literature, the localized multiplier process is, up to log factors, upper-bounded by a term of order $dm^{2}/n$.  Consequently, in such regimes, the constrained least squares and the ridge regression estimators match the rate of the lower bound \eqref{eq:shamir-simplified} up to logarithmic factors.

\item
  In Theorem~\ref{thm:erm-ridge-lower-bound}, we construct a distribution $P$ with $r=1$ and $m=1$ such that for $b \sim \sqrt{d}$, any constrained least squares estimator $\what_{b}^{\operatorname{ERM}}$ incurs an excess risk that is larger by a $\sqrt{d}$ factor than the lower bound \eqref{eq:shamir-simplified} proved in \citep{shamir2015sample}. In particular, we refute the conjecture of Ohad Shamir on the optimality of the constrained least squares estimator.  

\item  The lower bound \eqref{eq:shamir-simplified} that only holds for proper \emph{linear} predictors was matched in \citep{shamir2015sample} via the \emph{non-linear} Vovk-Azoury-Warmuth (VAW) forecaster without tuning the regularization parameter. We observe that once the regularization parameter is tuned, the VAW forecaster yields an exponential improvement on the parameters $b$ and $r$ in the lower bound \eqref{eq:shamir-simplified}; in particular, our observation demonstrates a fundamental gap between the performance achievable by proper and improper estimators. In addition, we discuss a modified version of the VAW forecaster due to \citep{forster2002relative} that completely removes the dependence on the boundedness constants $b$ and $r$ in its excess risk upper bound.
\end{enumerate}

We present the proofs of the main results in Section~\ref{sec:proofs} with some of the details delegated to the appendix.

\subsection{Notation}
The subscript $r$ in $P_{r}$ denotes that the distribution $P_{r}$ of the random pair $(X, Y)$ that satisfies $\|X\| \leq r$ almost surely. The boundedness constant $m$ is an upper bound on the $L_{\infty}$ norm of the response variable $Y$. We denote Euclidean balls with radius $b$ by $\mathcal{W}_{b} = \{\omega \in \R^{d} : \|\omega\| \le b\}$. Let $\what_{b}^{\operatorname{ERM}}$ denote any ERM over $\mathcal{W}_{b}$, that is, any solution to
\begin{equation}
    \label{eq:erm}
    \what^{\operatorname{ERM}}_{b} \in \argmin\limits_{\omega \in \mathcal{W}_{b}}
  \sum\limits_{i = 1}^n(Y_i - \langle \omega, X_i\rangle)^2.    
\end{equation}
For any $\lambda \geq 0$, denote the \emph{regularized sample second moment matrix} by
\begin{equation}
  \label{eq:samplecov}
  \widehat{\Sigma}_{\lambda} = \frac{1}{n}\left(\lambda I_d + \sum\limits_{k = 1}^{n - 1}X_kX_k^\mathsf{T} + XX^\mathsf{T}\right),
\end{equation}
where we write $X$ instead of $X_n$ in order to simplify the notation in our main results, and correspondingly, we write $Y$ and $\xi$ instead of $Y_{n}$ and $\xi_{n}$. Given a regularization parameter $\lambda > 0$, the \emph{ridge estimator} is defined as
\begin{equation}
  \label{eq:ridge}
  \widehat{\omega}_{\lambda} = \argmin\limits_{\omega \in \mathbb{R}^d}
  \sum\limits_{i = 1}^n(Y_i - \langle \omega, X_i\rangle)^2 + \lambda\|\omega\|^2
  = \left(n\widehat{\Sigma}_{\lambda}\right)^{-1}\left(\sum_{i=1}^{n} Y_{i}X_{i} \right).
\end{equation}
Further, let $\omega^{*}_{b}$ denote any solution minimizing the population risk $R(\cdot)$ over $\mathcal{W}_{b}$ and let $\xi$ denote the \emph{noise variable}:
\begin{equation}
  \label{eq:omegaandxi}
  \omega^{*}_{b} \in \argmin_{\omega \in \mathcal{W}_{b}} R(\omega)
  \quad \text{and} \quad \xi = \xi_{b}(X,Y) = Y - \langle \omega^{*}_{b}, X\rangle,
\end{equation}
We denote positive numerical constants by $c, c_1, \ldots$ and note that their values may change from line to line; $a \lesssim b$ denotes the existence of a numerical constant $c$ such that $a \leq cb$; $a \sim b$ is a shorthand for $b \lesssim a \lesssim b$.  The notation $\norm{\cdot}$ denotes the Euclidean norm for vectors and the operator norm for matrices. For any $p \in [1, \infty]$, $\norm{\cdot}_{L_p}$ denotes the $L_{p}(P)$ norm, where the distribution $P$ will always be clear from the context.  With a slight abuse of notation, for any $\omega \in \R^{d}$, we let $\norm{\omega}_{L_{2}}^{2} = \norm{\ip{\omega}{X}}_{L_{2}}^{2} = \E \inr{\omega, X}^2$.  The $d \times d$ identity matrix is denoted by $I_d$ and $\text{Diag}(a_1, \ldots, a_d)$ denotes the diagonal matrix formed by $a_{1}, \ldots, a_d$. Finally, the indicator function of an event $E$ is denoted by $\ind_{E}$.

\section{Upper Bounds}
\label{sec:upper-bounds}

In this section, we provide two upper bounds: the first is for constrained least squares and the second is for the ridge estimator. These bounds will later motivate our construction separating the performance of least squares and non-linear predictors under the boundedness assumption.
\subsection{Performance of Constrained Least Squares}

Our first theorem is an upper bound on the excess risk of any constrained least squares estimator
$\what_{b}^{\operatorname{ERM}}$. The proof is deferred to Section~\ref{section:erm-proof}. We remark that the below upper bound is non-asymptotic, in contrast to the agnostic upper bound proved in \citep[Theorem 2.1]{audibert2011robust}. Also, we make no restrictions on the sample size $n$ and the uniqueness of least squares, as opposed to results that hold for the unconstrained least squares estimator (e.g., \citep[Proposition 1]{mourtada2019exact}).

\begin{Theorem}
    \label{thm:main_thm}
    For any $n,d,b, r > 0$ and any distribution $P_{r}$ satisfying $\E Y^2 < \infty$ it holds that
    \begin{equation}
        \label{eq:thm_main}
        \E R(\what^{\operatorname{ERM}}_{b}) - R(\omega^*_b) \lesssim \inf\limits_{\lambda > 0}\left(\frac{\E\xi^2X^\mathsf{T}\widehat{\Sigma}_{\lambda r^2}^{-1}X}{n} + \frac{\lambda r^2b^2}{n}\right) + \frac{r^2b^2\log(\min\{n, d\})}{n},
    \end{equation}
    where  $\what^{\operatorname{ERM}}_{b}$, $\widehat{\Sigma}_{\lambda r^2}$ and $\omega^{*}_{b}, \xi$ are defined in 
    \eqref{eq:erm}, \eqref{eq:samplecov} and \eqref{eq:omegaandxi} respectively.
\end{Theorem}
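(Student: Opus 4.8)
# Proof Proposal

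The plan is to follow the standard program for excess risk bounds via localized processes, but to be careful to avoid the Ledoux–Talagrand contraction step that produces the extra factor $d$ in \eqref{eq:excess-risk-suboptimal-b-plus-m}. Write $\Delta = \what_b^{\operatorname{ERM}} - \omega_b^*$ and note $\|\Delta\| \le 2b$. By the basic inequality for ERM over the convex set $\mathcal{W}_b$ with the squared loss (using that $\omega_b^*$ is feasible), the empirical excess loss of $\what_b^{\operatorname{ERM}}$ is nonpositive, which after expanding the square yields the quadratic/multiplier decomposition
\begin{equation}
  \frac{1}{n}\sum_{i=1}^n \langle \Delta, X_i\rangle^2 \le \frac{2}{n}\sum_{i=1}^n \xi_i \langle \Delta, X_i\rangle,
\end{equation}
where $\xi_i = Y_i - \langle \omega_b^*, X_i\rangle$ are the (population-optimal) residuals. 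Since $\mathcal{W}_b$ is convex, the population excess risk equals $\|\Delta\|_{L_2}^2 = \E\langle \Delta, X\rangle^2$ plus a cross term that is nonnegative by first-order optimality of $\omega_b^*$; hence it suffices to control $\E\langle \Delta, X\rangle^2$ and relate the empirical quadratic form to its population counterpart.

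The next step is a localization argument. Introduce the regularized sample second moment matrix $\widehat{\Sigma}_{\lambda r^2}$ and rewrite the empirical quadratic form using it; the point of the ridge-type regularization parameter $\lambda$ is that it makes $\widehat{\Sigma}_{\lambda r^2}$ invertible and lets us diagonalize the multiplier term. Concretely, from the displayed basic inequality, adding $\lambda r^2 \|\Delta\|^2/n$ to both sides and completing the square gives a bound of the shape
\begin{equation}
  \langle \Delta, \widehat{\Sigma}_{\lambda r^2}\Delta\rangle \le \frac{1}{n}\left(\sum_{i=1}^n \xi_i X_i\right)^{\!\mathsf{T}} \widehat{\Sigma}_{\lambda r^2}^{-1}\left(\sum_{i=1}^n \xi_i X_i\right) + \frac{\lambda r^2 b^2}{n},
\end{equation}
after using $\|\Delta\|^2 \le (2b)^2$ appropriately; taking expectations and expanding the square of $\sum \xi_i X_i$, the diagonal terms produce $\E[\xi^2 X^{\mathsf T}\widehat{\Sigma}_{\lambda r^2}^{-1} X]$ (using that the cross terms $\E[\xi_i\xi_j X_i^{\mathsf T}\widehat{\Sigma}^{-1}X_j]$ for $i\ne j$ vanish or are handled by a symmetrization/leave-one-out argument, which is exactly why $X$ is singled out as $X_n$ in \eqref{eq:samplecov}). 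This is the first term in \eqref{eq:thm_main}. It remains to pass from the empirical norm $\langle \Delta, \widehat{\Sigma}_{\lambda r^2}\Delta\rangle$ back to the population norm $\|\Delta\|_{L_2}^2$: this is where the localized quadratic process enters, and where I would invoke Oliveira's matrix concentration inequality (the version of Rudelson's inequality for sums of rank-one operators) to show that, on the relevant event, $\widehat{\Sigma}$ and $\Sigma$ are comparable up to a multiplicative constant plus an additive $r^2\log(\min\{n,d\})/n$ correction, contributing the last term $r^2 b^2 \log(\min\{n,d\})/n$ in \eqref{eq:thm_main}. Finally, take the infimum over $\lambda > 0$.

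The main obstacle I expect is the self-referential coupling between $\Delta$ (which depends on the whole sample) and the matrix $\widehat{\Sigma}_{\lambda r^2}^{-1}$ together with the multiplier vector $\sum \xi_i X_i$: the inner-product bound on $\langle \Delta, \widehat{\Sigma}\Delta\rangle$ only becomes useful after one has decoupled $\Delta$ from the randomness used to control the quadratic process, and the term $\E[\xi^2 X^{\mathsf T}\widehat{\Sigma}_{\lambda r^2}^{-1}X]$ is delicate precisely because $X = X_n$ appears both in $\xi = \xi_n$ and inside $\widehat{\Sigma}_{\lambda r^2}$. I would handle this with a leave-one-out / exchangeability argument (replacing $\widehat{\Sigma}_{\lambda r^2}$ by its leave-$n$-out analogue and using Sherman–Morrison to bound the discrepancy by the statistical leverage score, which is $\le 1$ up to the regularization), combined with the observation that the cross terms in the multiplier expansion are conditionally mean-zero once the appropriate sub-$\sigma$-algebra is fixed. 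The quadratic-process comparison via Oliveira's inequality is technically the heaviest piece but conceptually routine; the care needed to keep all boundedness constants at their claimed powers (in particular avoiding any contraction inequality that would reintroduce a factor $d$) is the subtle part.
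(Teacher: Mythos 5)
Your plan breaks at its central step. After completing the square, your bound requires that $\tfrac{1}{n^{2}}\,\E\big[\big(\sum_{i}\xi_{i}X_{i}\big)^{\mathsf{T}}\widehat{\Sigma}_{\lambda r^{2}}^{-1}\big(\sum_{i}\xi_{i}X_{i}\big)\big]$ reduce to the diagonal contribution $\tfrac{1}{n}\E\big[\xi^{2}X^{\mathsf{T}}\widehat{\Sigma}_{\lambda r^{2}}^{-1}X\big]$, i.e.\ that the $n(n-1)$ cross terms be negligible. They are not: in the misspecified constrained problem the first-order (KKT) conditions give $\E[\xi X]=\mu\,\omega^{*}_{b}$ with $\mu>0$ whenever the constraint is active, so $\sum_{i}\xi_{i}X_{i}$ carries a deterministic drift of order $n$, and the cross terms contribute roughly $\mu^{2}\,(\omega^{*}_{b})^{\mathsf{T}}\E\widehat{\Sigma}_{\lambda r^{2}}^{-1}\omega^{*}_{b}$ --- a quantity that does not decay in $n$. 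No leave-one-out or after-the-fact symmetrization can repair this, because the quantity you are trying to bound is genuinely of constant order; the Sherman--Morrison/leverage manipulations only address the dependence of $\widehat{\Sigma}$ on $X_{n}$, not the drift. This is precisely why the paper never works with the raw basic inequality: it \emph{centers} the multiplier process (the population term $-2\E\,\xi\langle\Delta,X\rangle$ is nonnegative by first-order optimality and is absorbed through the Bernstein condition \eqref{eq:bernstein_eq}), symmetrizes the centered process with Rademacher signs, and only then maximizes over $v\in\R^{d}$ against the empirical quadratic form plus the ridge penalty; because the signs are independent of $\widehat{\Sigma}_{\lambda r^{2}}$, the off-diagonal terms vanish and exactly $\tfrac{1}{n}\E\,\xi^{2}X^{\mathsf{T}}\widehat{\Sigma}_{\lambda r^{2}}^{-1}X$ survives. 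A related directional error: you assert that ``it suffices to control $\E\langle\Delta,X\rangle^{2}$,'' but convexity gives $R(\what_{b}^{\operatorname{ERM}})-R(\omega^{*}_{b})\ge\|\Delta\|_{L_{2}}^{2}$, not $\le$; the nonnegative population multiplier term must be bounded separately (the paper does this with an additional contradiction argument that reuses the localized multiplier fixed point).

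A secondary gap is in the quadratic part. Applying Oliveira's inequality globally on the ball $\|\Delta\|\le 2b$ does not give ``$\widehat{\Sigma}\approx\Sigma$ up to a constant plus an additive $r^{2}\log(\min\{n,d\})/n$ correction''; it gives a deviation of order $b^{2}r\sqrt{\log(\min\{n,d\})/n}$, which is a slow-rate term. To turn this into the additive fast-rate term $r^{2}b^{2}\log(\min\{n,d\})/n$ one must localize at $L_{2}(P)$-radius $s$ and solve the fixed-point inequality, and making Oliveira's bound applicable at radius $s$ requires the coordinate rescaling used in the paper (the auxiliary vector $W$ built from the intersection of the $L_{2}$-ball with the ellipsoid $\Sigma^{1/2}B(2b)$, so that $\|W\|\le 2b/s$). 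Your outline is in the right spirit here, but the step you call conceptually routine is where that construction actually does the work.
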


We comment on the structure of the above bound. Assume for the sake of presentation that $\widehat{\Sigma}_{0}$ is invertible. Then, with the choice $\lambda = 0$, we may rewrite the above upper bound as follows:
\begin{equation}
    \label{eq:quadratic-multiplier-explanation}
    \E R(\what^{\operatorname{ERM}}_{b}) -  R(\omega^*_b) \lesssim  \underbrace{\frac{\E\xi^2X^\mathsf{T}\widehat{\Sigma}_{0}^{-1}X}{n}}_{\text{Interaction with the noise}} + \underbrace{\frac{r^2b^2\log(\min\{n, d\})}{n}}_{\text{Low-noise complexity}}.
\end{equation}
The first term, which arises from the supremum of the localized multiplier process, shows the correlation between the noise $\xi$ and the \emph{statistical leverage score} $X^\mathsf{T}\widehat{\Sigma}_{0}^{-1}X$ (let $\mathbf{X} \in \R^{n \times d}$ denote the matrix with the $i$-th row equal to $X_{i}$; we may write $X^\mathsf{T}\widehat{\Sigma}_{0}^{-1}X = H_{nn}$, where $H \in \R^{n \times n}$ is the ``hat matrix'' defined as $H = \mathbf{X}(\mathbf{X}^{\mathsf{T}}\mathbf{X})^{-1}\mathbf{X}^{\mathsf{T}}$). If $d \le n$ and the noise random variable $\xi$ is independent of $X$, then the first term in \eqref{eq:quadratic-multiplier-explanation} corresponds essentially to the minimax optimal rate for unconstrained least squares regression \citep[Theorem 2]{mourtada2019exact} and is hence unimprovable in general.
The second term in \eqref{eq:quadratic-multiplier-explanation}, which arises from the supremum of the localized quadratic process, intuitively captures the problem complexity in low-noise regimes, that is, when $\xi$ is relatively small. In Proposition~\ref{prop:lowerbound}, we demonstrate a \emph{noiseless} problem such that for \emph{some} constrained least squares solutions the second term in \eqref{eq:quadratic-multiplier-explanation} is tight.

\subsection{Performance of the Ridge Regression Estimator}
We now turn to our second result, which provides an excess risk upper bound for the ridge regression estimator. The proof is deferred to Appendix~\ref{sec:secondmainthm}.
\begin{Theorem}
    \label{thm:main_thm_second}
    For any $n,d,b,r > 0$, any distribution $P_{r}$ satisfying $\E Y^2 < \infty$, and any choice of
    the regularization parameter $\lambda \gtrsim r^{2}$, it holds that
    \begin{equation}
        \label{eq:thm_main_second}
        \E R(\widehat{\omega}_{\lambda}) - R(\omega^*_b) \lesssim \frac{\E\xi^2X^\mathsf{T}\widehat{\Sigma}_{\lambda}^{-1}X}{n} + \frac{\lambda b^2}{n},
    \end{equation}
    where  $\widehat{\omega}_{\lambda}$, $\widehat{\Sigma}_{\lambda}$ and $\omega^{*}_{b}, \xi$ are defined in 
    \eqref{eq:ridge}, \eqref{eq:samplecov} and \eqref{eq:omegaandxi} respectively.
\end{Theorem}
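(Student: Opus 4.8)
The plan is to bound the expected excess risk of ridge via an \emph{average stability} argument. Write
\[
  \E R(\widehat{\omega}_{\lambda}) - R(\omega^{*}_{b})
  \;=\; \underbrace{\E\bigl[R(\widehat{\omega}_{\lambda}) - \widehat{R}_{n}(\widehat{\omega}_{\lambda})\bigr]}_{\text{generalization gap}}
  \;+\; \underbrace{\E\bigl[\widehat{R}_{n}(\widehat{\omega}_{\lambda})\bigr] - R(\omega^{*}_{b})}_{\text{fitting error}},
\]
where $\widehat{R}_{n}(\omega) = \tfrac1n\sum_{i=1}^{n}(Y_{i}-\langle\omega,X_{i}\rangle)^{2}$. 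The fitting error is controlled by the \emph{regularized} optimality of ridge: since $\widehat{\omega}_{\lambda}$ minimizes $\widehat{R}_{n}(\omega) + \tfrac{\lambda}{n}\norm{\omega}^{2}$ over $\R^{d}$, comparing with $\omega^{*}_{b}$ and using $\E\widehat{R}_{n}(\omega^{*}_{b}) = R(\omega^{*}_{b})$ together with $\norm{\omega^{*}_{b}}\le b$ yields
\[
  \E\bigl[\widehat{R}_{n}(\widehat{\omega}_{\lambda})\bigr] - R(\omega^{*}_{b})
  \;\le\; \frac{\lambda b^{2}}{n} \;-\; \frac{\lambda}{n}\,\E\norm{\widehat{\omega}_{\lambda}}^{2}.
\]
The point is to \emph{keep} the negative term $-\tfrac{\lambda}{n}\E\norm{\widehat{\omega}_{\lambda}}^{2}$: together with a negative term produced on the generalization side it realizes the ``stability--fitting trade-off'' announced in the introduction.

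For the generalization gap I would use the standard renaming identity: with $\widehat{\omega}_{\lambda}^{(i)}$ the ridge estimator obtained from the sample in which $Z_{i}=(X_{i},Y_{i})$ is replaced by an independent copy $Z_{i}'$, and $\ell(\omega,(x,y)) = (y-\langle\omega,x\rangle)^{2}$,
\[
  \E\bigl[R(\widehat{\omega}_{\lambda}) - \widehat{R}_{n}(\widehat{\omega}_{\lambda})\bigr]
  \;=\; \frac{1}{n}\sum_{i=1}^{n}\E\bigl[\ell(\widehat{\omega}_{\lambda}^{(i)},Z_{i}) - \ell(\widehat{\omega}_{\lambda},Z_{i})\bigr].
\]
Expanding the squared loss gives $\ell(\widehat{\omega}_{\lambda}^{(i)},Z_{i}) - \ell(\widehat{\omega}_{\lambda},Z_{i}) = 2(Y_{i}-\langle\widehat{\omega}_{\lambda},X_{i}\rangle)\langle\widehat{\omega}_{\lambda}-\widehat{\omega}_{\lambda}^{(i)},X_{i}\rangle + \langle\widehat{\omega}_{\lambda}-\widehat{\omega}_{\lambda}^{(i)},X_{i}\rangle^{2}$. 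Bounding the first, linear-in-perturbation term by a Lipschitz estimate on the loss is precisely what injects the spurious constant $(br+m)^{2}$ into \eqref{eq:excess-risk-suboptimal-b-plus-m}; instead I would exploit the curvature of the squared loss and the exchangeability of $Z_{i}$ and $Z_{i}'$. Conditioning on the $n-1$ shared points, the Sherman--Morrison formula gives the leave-one-out identities $\widehat{\omega}_{\lambda}-\widehat{\omega}_{\lambda,-i} = \widehat{\xi}_{i}\,A_{-i}^{-1}X_{i}$ and $\widehat{\xi}_{i} = (1-h_{ii})(Y_{i}-\langle\widehat{\omega}_{\lambda,-i},X_{i}\rangle)$, where $A_{-i}=\lambda I_{d}+\sum_{j\ne i}X_{j}X_{j}^{\mathsf{T}}$, $\widehat{\xi}_{i}=Y_{i}-\langle\widehat{\omega}_{\lambda},X_{i}\rangle$ is the in-sample residual, $h_{ii}=X_{i}^{\mathsf{T}}(n\widehat{\Sigma}_{\lambda})^{-1}X_{i}$ is the ridge leverage score and $\widehat{\omega}_{\lambda,-i}$ is the ridge estimator with point $i$ removed; the analogous identities hold with $Z_{i}$ replaced by $Z_{i}'$. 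Substituting these into the displayed expansion, the conditional expectation of the linear term equals $-2\,g_{-i}^{\mathsf{T}}A_{-i}^{-1}g_{-i}\le 0$ with $g_{-i} = \E\bigl[(1-h_{ii})(Y-\langle\widehat{\omega}_{\lambda,-i},X\rangle)X \,\big|\, (Z_{j})_{j\ne i}\bigr]$, because $Z_{i}$ and $Z_{i}'$ are i.i.d.\ given the shared points. This conversion of the dangerous linear term into a \emph{helpful} negative quantity is the novel ingredient.

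It remains to treat the genuinely second-order contribution $\tfrac1n\sum_{i}\E\langle\widehat{\omega}_{\lambda}-\widehat{\omega}_{\lambda}^{(i)},X_{i}\rangle^{2}$ together with the residual of the linear term, which after the same substitutions is essentially of the form $\tfrac1n\sum_{i}\E\bigl[(Y_{i}-\langle\widehat{\omega}_{\lambda,-i},X_{i}\rangle)^{2}h_{ii}\bigr]$. Here the hypothesis $\lambda\gtrsim r^{2}$ is used: it forces the ridge leverage $h_{ii}\le\norm{X_{i}}^{2}/(\lambda+\norm{X_{i}}^{2})\le r^{2}/(\lambda+r^{2})$ to be bounded away from $1$, and combined with the spectral bounds $\Sigma\preceq r^{2}I_{d}\preceq\lambda I_{d}$ it keeps all leverage- and conditioning-induced factors of constant order and reduces the second-order term to one of the same type. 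Writing $Y_{i}-\langle\widehat{\omega}_{\lambda,-i},X_{i}\rangle = \xi_{i}+\langle\omega^{*}_{b}-\widehat{\omega}_{\lambda,-i},X_{i}\rangle$ and using exchangeability, the ``clean noise'' part contributes exactly $\tfrac1n\sum_{i}\E[\xi_{i}^{2}h_{ii}] = \E\bigl[\xi^{2}X^{\mathsf{T}}\widehat{\Sigma}_{\lambda}^{-1}X\bigr]/n$, the first term of the asserted bound.

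The step I expect to be the main obstacle is closing the loop. After the substitutions, the leftover ``bias'' contribution $\tfrac1n\sum_{i}\E\norm{\omega^{*}_{b}-\widehat{\omega}_{\lambda,-i}}_{L_{2}}^{2}$ and the second-order remainder are, a priori, of the same order as the excess risk one is bounding, so a naive estimate is circular. This has to be resolved via the stability--fitting trade-off: these leftover terms must be absorbed into the two negative quantities already collected --- $-\tfrac{\lambda}{n}\E\norm{\widehat{\omega}_{\lambda}}^{2}$ from the fitting error and the terms $-2\,g_{-i}^{\mathsf{T}}A_{-i}^{-1}g_{-i}$ from the linear part --- using the quadratic structure of $R$ near $\omega^{*}_{b}$ (so that $L_{2}$ distances are comparable to excess risks, the first-order correction $\langle\E[\xi X],\cdot-\omega^{*}_{b}\rangle$ being nonpositive on $\mathcal{W}_{b}$) together with a self-bounding comparison between $\widehat{\omega}_{\lambda,-i}$ and $\widehat{\omega}_{\lambda}$; equivalently, the displayed renaming identity can be read as a recursion in the sample size along which the negative curvature terms must be shown to dominate the positive second-order source terms, which is where the constant hidden in $\lambda\gtrsim r^{2}$ is chosen large enough. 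Carrying out this bookkeeping so that all constants match and, crucially, so that no $\log(\min\{n,d\})$ factor survives --- in contrast to the constrained least squares bound of Theorem~\ref{thm:main_thm} --- is the delicate part of the argument.
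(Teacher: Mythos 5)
Your overall strategy (average stability, Sherman--Morrison leave-one-out identities, leverage scores bounded away from $1$ via $\lambda \gtrsim r^{2}$) matches the paper's, but the argument is not closed, and the step you defer is precisely where your choice of decomposition breaks down. After your substitutions the stability/generalization side produces leverage times \emph{out-of-sample} residuals, $\tfrac1n\sum_i \E[(Y_i-\langle\widehat\omega_{\lambda,-i},X_i\rangle)^2 h_{ii}]$, and splitting $Y_i-\langle\widehat\omega_{\lambda,-i},X_i\rangle=\xi_i+\langle\omega^*_b-\widehat\omega_{\lambda,-i},X_i\rangle$ leaves a bias contribution of the same order as the excess risk itself. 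You propose to absorb it into the negative quantities you kept, namely $-\tfrac{\lambda}{n}\E\|\widehat\omega_\lambda\|^2$ (from the naive regularized-optimality bound on the fitting term) and the cross terms $-2g_{-i}^{\mathsf{T}}A_{-i}^{-1}g_{-i}$, via a recursion/self-bounding argument --- but you never carry this out, and these particular negative terms are not adapted to the job; this is exactly the circularity the paper is engineered to avoid.

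The paper's proof sidesteps the issue by two moves you are missing. First, the stability term is computed \emph{exactly} in terms of the in-sample residuals: with $\widehat\xi_i = Y_i - \langle\tilde\omega_\lambda, X_i\rangle$ (the residual of the ridge fit on all points) and $h_i$ the ridge leverage, Sherman--Morrison gives the identity $(Y_i-\langle\widehat\omega_\lambda^{(-i)},X_i\rangle)^2=(1-h_i)^{-2}\widehat\xi_i^{\,2}$, so the stability term is $\lesssim \E\, h_i\widehat\xi_i^{\,2}$ once $\lambda\gtrsim r^2$ forces $h_i\le 1/(1+c)$ --- no bias/noise split of a leave-one-out residual ever appears. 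Second, the fitting term is \emph{not} bounded by the naive $\lambda(\|\omega^*\|^2-\|\tilde\omega_\lambda\|^2)$; using the first-order optimality condition together with the exact quadratic expansion of the excess loss, the paper shows the sharper deterministic bound
\begin{equation}
\sum_{i}\Bigl((Y_i-\langle\tilde\omega_\lambda,X_i\rangle)^2-(Y_i-\langle\omega^*_b,X_i\rangle)^2\Bigr)\le -\sum_i(\xi_i-\widehat\xi_i)^2+\frac{\lambda\|\omega^*_b\|^2}{2}.
\end{equation}
The negative term $-\sum_i(\xi_i-\widehat\xi_i)^2$ is then combined \emph{pointwise} with the stability term through the elementary inequality $\alpha\widehat\xi_i^{\,2}-(\xi_i-\widehat\xi_i)^2\le \tfrac{\alpha}{1-\alpha}\xi_i^2$ (valid for $0<\alpha<1$, applied with $\alpha$ proportional to $h_i$), which converts in-sample residuals into the true noise and yields exactly $\E[\xi^2 X^{\mathsf{T}}\widehat\Sigma_\lambda^{-1}X]/n + \lambda b^2/n$ with no leftover term and no recursion. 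Without this pair of ingredients --- in-sample residuals in the stability term and the curvature-improved fitting bound --- your plan as written does not yield the theorem.
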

We remark that one may not choose an arbitrary small value of $\lambda$ and hence the above theorem does not directly imply the result of Theorem~\ref{thm:main_thm}. Also, note that the empirical risk functional is not normalized in our work, and hence the above choice of $\lambda$ corresponds to the regularization parameter scaling as $r^{2}/n$ for normalized empirical risks considered in some other works.

\subsection{Discussion on the Optimality of Theorems~\ref{thm:main_thm}~and~\ref{thm:main_thm_second}}

At the first sight, the upper bounds presented in Theorems~\ref{thm:main_thm}~and~\ref{thm:main_thm_second} look similar; however, there are several important differences that we emphasize below:
\begin{itemize}
    \item The ridge estimator $\widehat{\omega}_{\lambda}$ does not necessarily belong to the set $\mathcal{W}_{b}$ and hence it is an \emph{improper} estimator, in contrast to the least squares estimator $\what_{b}^{\operatorname{ERM}}$. In addition, a single ridge regression estimator $\what_{\lambda}$ provides a \emph{family} of upper bounds, one for each choice of $b$, whereas the least squares estimator $\what_{b}^{\operatorname{ERM}}$ itself depends on the choice of $b$.
    
    \item The parameter $\lambda$ in Theorem \ref{thm:main_thm} is used to optimize the trade-off between the two terms and does not affect the estimator itself. In Section~\ref{sec:corollaries}, we demonstrate how the flexibility to optimize $\lambda$ in the upper bound in Theorem~\ref{thm:main_thm} allows to match the ``slow rate'' term in a more general version of the lower bound \eqref{eq:shamir-simplified}. It is not immediately evident whether the same is true for the ridge estimator $\what_{\lambda}$ in view of Theorem~\ref{thm:main_thm_second}.
    
    \item Theorem~\ref{thm:main_thm} contains an extra factor $\log(\min \{n, d\})$ that is not present in  Theorem~\ref{thm:main_thm_second}. In Proposition~\ref{prop:lowerbound} below, we show that this logarithmic factor is inherent for constrained least squares.
    
    \item The analysis of the constrained least squares estimator is based on the empirical process theory and concentration inequalities for random matrices; the analysis of the ridge estimator is based on an average stability argument.
\end{itemize}

The next result, proved in Appendix~\ref{sec:prooflowerbound}, shows that the extra logarithmic factor that appears in Theorem~\ref{thm:main_thm} but not in Theorem~\ref{thm:main_thm_second} is unavoidable. Our proof technique is based on an instance of the \emph{coupon collector problem}, a common tool for establishing that some logarithmic factors are unimprovable in the noise-free binary classification problem (see, e.g., \citep*{Bousquet20} and the references therein). We remark that the below lower bound holds for \emph{some} ERM, yet there might exist other ERMs which may violate the below lower bound.
\begin{Proposition}
\label{prop:lowerbound}
    For any large enough sample size $n$, any $d \geq n$, and any $r,b > 0$, there exists a distribution $P_r = P_r(n, d, b)$
    with $\xi = 0$ such that the following lower bound holds for some constrained least squares estimator:
    \[
        \E R(\widehat{\omega}^{\operatorname{ERM}}_{b}) - R(\omega^*_b) \gtrsim \frac{r^2b^2\log n}{n}.
    \]
\end{Proposition}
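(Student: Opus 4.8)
The plan is to build a noiseless distribution on $\mathbb{R}^d$ with $d \ge n$ in which the covariates are (scaled) standard basis vectors $e_1,\dots,e_d$, each drawn uniformly, and $Y_i = 0$ almost surely. Concretely, take $X = r\, e_J$ with $J$ uniform on $\{1,\dots,d\}$ and $Y = 0$; then the population risk of every $\omega$ is $R(\omega) = \tfrac{r^2}{d}\|\omega\|^2$, so $\omega_b^* = 0$, $\xi = 0$, and the excess risk of any estimator $\widehat\omega$ equals $\tfrac{r^2}{d}\,\E\|\widehat\omega\|^2$. Since $Y_i = 0$ for all $i$, the empirical risk $\sum_i \langle \omega, X_i\rangle^2$ is minimized (with value $0$) by any $\omega$ that is orthogonal to every observed covariate $X_i = r\, e_{J_i}$ — that is, by any $\omega$ supported on the set $M = \{1,\dots,d\}\setminus\{J_1,\dots,J_n\}$ of coordinates never sampled. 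The freedom to choose \emph{some} ERM is exactly the freedom to place $\omega$ anywhere in $\mathcal{W}_b$ supported on $M$: we pick the adversarial ERM $\widehat\omega^{\mathrm{ERM}}_b = b\, e_{k}$ for some $k \in M$ (and, say, $0$ if $M = \emptyset$), which is a valid least-squares solution with empirical risk $0$ and norm $b$.

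With this choice the excess risk is $\tfrac{r^2 b^2}{d}\,\mathbb{P}(M \ne \emptyset)$, so it remains to lower bound $\mathbb{P}(M \ne \emptyset)$, i.e.\ the probability that $n$ uniform draws from $\{1,\dots,d\}$ fail to cover all $d$ coordinates. This is precisely the coupon-collector tail: for $d \ge n$ the $n$ samples cannot possibly hit all $d$ coordinates, so in fact $\mathbb{P}(M \ne \emptyset) = 1$ whenever $d > n$, and the excess risk is $\tfrac{r^2 b^2}{d}$ — but that only gives $r^2 b^2 / d$, not $r^2 b^2 \log n / n$, so the crude basis construction is \emph{not} enough. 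The fix is to spread the mass over only $D := \lceil n / \log n \rceil$ coordinates instead of all $d$ (embedding $\mathbb{R}^D \hookrightarrow \mathbb{R}^d$ and padding with zeros), i.e.\ take $X = r\, e_J$ with $J$ uniform on $\{1,\dots,D\}$. Then $R(\omega) = \tfrac{r^2}{D}\|\omega\|^2$, the excess risk of the adversarial ERM is $\tfrac{r^2 b^2}{D}\,\mathbb{P}(\text{some coordinate in }\{1,\dots,D\}\text{ is unsampled})$, and by the standard coupon-collector estimate, after $n = D\log n \approx D\log D$ draws the probability of an uncovered coordinate is bounded below by an absolute constant (this is the classical fact that $\Theta(D\log D)$ draws are needed to cover $D$ coupons, with constant-probability failure at the threshold; one gets it from $\mathbb{P}(\text{coord }j\text{ unsampled}) = (1-1/D)^n$ together with a second-moment / Bonferroni lower bound, or from Poissonization). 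Hence the excess risk is $\gtrsim \tfrac{r^2 b^2}{D} \gtrsim \tfrac{r^2 b^2 \log n}{n}$, as claimed.

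The main obstacle — and the only place requiring care — is the coupon-collector lower bound: one must show that with $D = \lceil n/\log n\rceil$ coupons and $n$ draws, the failure probability $\mathbb{P}(\exists\, j \le D : j \text{ unsampled})$ is bounded below by a positive absolute constant for all large $n$. A clean way is inclusion–exclusion truncated at the first two terms (Bonferroni): $\mathbb{P}(\text{failure}) \ge \sum_{j} \mathbb{P}(j\text{ unsampled}) - \sum_{j<k}\mathbb{P}(j,k\text{ unsampled}) = D(1-\tfrac1D)^n - \binom{D}{2}(1-\tfrac2D)^n$; plugging in $n = D\log D + O(D)$ and using $(1-1/D)^n \approx e^{-n/D} = e^{-\log D} = 1/D$ (with the $O(D)$ slack only changing absolute constants), the first term is $\asymp 1$ and the second is of the same order times a constant $<1$, so the difference is a positive constant. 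Alternatively one can invoke a Poissonization argument or cite a standard coupon-collector tail bound such as those surveyed in the references around \citep{Bousquet20}. Everything else — computing $R(\cdot)$, identifying $\omega_b^*=0$ and $\xi=0$, verifying that $b\,e_k$ is a genuine ERM when $k$ is unsampled, and the embedding into $\mathbb{R}^d$ — is routine, so the bound $\E R(\widehat\omega^{\mathrm{ERM}}_b) - R(\omega_b^*) \gtrsim r^2 b^2 \log n / n$ follows.
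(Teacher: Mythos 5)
Your construction and overall strategy coincide with the paper's: noiseless data with $Y=0$, covariates uniform on roughly $n/\log n$ scaled basis vectors, an adversarial least-squares solution $b\,e_k$ supported on an unsampled coordinate, and a coupon-collector bound on the probability that some coordinate is missed. However, the one step you yourself flagged as requiring care fails for your specific choice $D=\lceil n/\log n\rceil$. With that $D$ you have $n/D=\log n$, so $\Pr(\text{coordinate } j \text{ unsampled})=(1-1/D)^n\le e^{-n/D}=1/n$, not $1/D$ as in your heuristic: you substituted $\log D$ for $\log n$, and since $\log n=\log D+\log\log n$ these differ by a factor of $\log n$ exactly at this threshold. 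Consequently the first Bonferroni term is $D(1-1/D)^n\le D/n=O(1/\log n)$, and by the union bound $\Pr(M\neq\emptyset)=O(1/\log n)$ as well, so it is not bounded below by an absolute constant. Equivalently, $n=D\log D+D\log\log n$ exceeds the collector threshold $D\log D$ by $D\log\log n=\omega(D)$, while the transition window has width only $\Theta(D)$, so all $D$ coordinates are covered with probability $1-o(1)$. Your excess-risk bound then degrades to $\frac{r^2b^2}{D}\cdot O(1/\log n)\asymp\frac{r^2b^2}{n}$, losing precisely the $\log n$ factor the proposition asserts.

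The gap is only in the choice of constant and is fixable, which is essentially what the paper does: it takes $k$ to be the \emph{smallest} integer with $n\le\frac12 k\log k$, so the collection time $T$ (concentrated around $kH_k\ge k\log k\ge 2n$ by a Chebyshev-type bound from the coupon-collector literature) exceeds $n$ with probability at least $1/2$, guaranteeing a missed basis vector with constant probability; it then verifies $k\le 5n/\log n$, giving excess risk $\gtrsim b^2/k\gtrsim b^2\log n/n$. In your notation, taking for instance $D=\lceil 4n/\log n\rceil$ also works: then the expected number of unsampled coordinates is at least $D(1-1/D)^n\ge De^{-2n/D}=D\,n^{-1/2}\to\infty$, and a second-moment argument (or the paper's collection-time bound) yields $\Pr(M\neq\emptyset)\ge c>0$, after which your identification of $\omega^*_b=0$, $\xi=0$, the validity of $b\,e_k$ as an ERM, and the computation of its excess risk go through verbatim and give the claimed $r^2b^2\log n/n$ rate.
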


We now discuss some closely related lower bounds indicating that Theorem \ref{thm:main_thm} and Theorem \ref{thm:main_thm_second} cannot be improved in a certain sense. First, \citep[Theorem 2]{mourtada2019exact} shows that if $d \le n$, then for any distribution of the covariates $X$ such that the sample covariance matrix is invertible almost surely and any linear predictor $\tilde{\omega}$, there is a joint distribution $(X, Y)$ with independent zero mean Gaussian noise $\xi$ such that the following holds:
\begin{equation}
\label{eq:mourtlowerbound}
\E R(\tilde{\omega}) - R(\omega^*_{\infty}) \ge \E\xi^2\left(\E\frac{X^{\mathsf{T}}\widehat{\Sigma}_{0}^{-1}X}{n - X^{\mathsf{T}}\widehat{\Sigma}_{0}^{-1}X}\right)= \E\left(\frac{\xi^2X^{\mathsf{T}}\widehat{\Sigma}_{0}^{-1}X}{n - X^{\mathsf{T}}\widehat{\Sigma}_{0}^{-1}X}\right),
\end{equation}
where $\omega^*_{\infty}$ minimizes the risk among all vectors in $\mathbb{R}^d$. This term is an exact analog of the first term in the upper bounds \eqref{eq:thm_main} whenever $\frac{1}{n}X^{\mathsf{T}}\widehat{\Sigma}_{0}^{-1}X$ is separated from $1$ and $\lambda = 0$. Second, it is shown in \citep[Theorem 3]{shamir2015sample} that for $d = 1$, any $m,b,r$ satisfying $br \geq 2m$, and any estimator $\tilde{\omega}$ taking its value in $\mathcal{W}_{b}$, there exists a distribution $P_{r}$ such that
$\norm{Y}_{L_{\infty}} \leq m$ and 
\begin{equation}
\label{eq:shamirlowerbound}
    \E R(\tilde{\omega}) - R(\omega^*_{b}) \gtrsim \min\left\{m^2, \frac{r^{2}b^2}{n}\right\}.
\end{equation}
Proposition \ref{prop:lowerbound} and the lower bounds \eqref{eq:mourtlowerbound}, \eqref{eq:shamirlowerbound} indicate the existence of regimes such that none of the terms appearing in Theorems~\ref{thm:main_thm}~and~\ref{thm:main_thm_second} can be improved in general; the full picture is, however, more subtle. In particular, for constrained least squares $\what_{b}^{\operatorname{ERM}}$ (i.e., $b < \infty$) we obtain a non-trivial upper bound even in the regimes when the leverage scores $\frac{1}{n}X^{\mathsf{T}}\widehat{\Sigma}_{0}^{-1}X$ are close to $1$. On the other hand, the global bound (i.e., $b = \infty$) stated in \eqref{eq:mourtlowerbound} can deteriorate if the leverage scores are close to $1$. We also remark that the construction of distributions used to prove the lower bound \eqref{eq:shamirlowerbound} rely on non-zero noise problems, in contrast to the construction of the distribution used to prove Proposition~\ref{prop:lowerbound} in our work. Therefore, the term $\frac{r^{2}b^2}{n}$ that appears in the lower bound \eqref{eq:shamir-simplified} is not directly related to the problem complexity in the low-noise regimes, as opposed to the second term in Theorem~\ref{thm:main_thm}. 

\subsection{Bounds on the Multiplier Term}
\label{sec:corollaries}

The upper bounds presented in Theorems~\ref{thm:main_thm} and \ref{thm:main_thm_second} hold assuming boundedness of the covariates $\norm{X} \leq r$, and square integrability of the labels $\E Y^{2} < \infty$. In this section, we turn to the bounded setting when in addition it holds that $\norm{Y}_{L_{\infty}} \leq m$ . Note that the terms $r^{2}b^{2}\log(\min\{n,d\})/n$ and $r^{2}b^{2}/n$ appearing in Theorems~\ref{thm:main_thm} and \ref{eq:thm_main_second} respectively match the corresponding term (up to the logarithmic factor) that appears in the lower bound~\eqref{eq:shamir-simplified}. As a result, the constrained least squares and the ridge estimators can only exhibit suboptimal behaviour when the \emph{multiplier term} $\E\xi^2X^\mathsf{T}\widehat{\Sigma}_{\lambda}^{-1}X/n$ is significantly larger than $dm^{2}/n$ (cf.\ the discussion following Theorem~\ref{thm:main_thm}). In this section, we discuss several distributional assumptions that in addition to boundedness ensure a well-behaved multiplier term. Intuitively, all the assumptions considered below introduce a form of independence between the noise variable and the statistical leverage scores. In Section~\ref{sec:lower-bound}, we demonstrate that once such assumptions are violated, the multiplier term can be larger than $dm^{2}/n$ by a multiplicative $\sqrt{d}$ factor, despite the restriction to a family of bounded distributions.

The main result of the current section is Proposition~\ref{prop:l4l2}, which shows that the constrained least squares and the ridge estimators match the lower bound \eqref{eq:shamir-simplified} under $L_{4}$--$L_{2}$ moment equivalence assumptions specified below. We remark, however, that tighter lower bounds than that of \eqref{eq:shamir-simplified} might be possible under some of the assumptions considered below. In particular, it is well-known that assumptions closely related to moment equivalence considered below (e.g., small-ball \citep{Mendelson:2015:LWC:2799630.2699439}) might simplify the quadratic process, which gives rise to the $r^{2}d^{2}\log(\min\{n, d\})/n$ term in Theorem~\ref{thm:main_thm}. However, as discussed above, the multiplier term is responsible for the suboptimality of constrained least squares and in this section we are mainly trying to understand the conditions sufficient to ensure a well-behaved multiplier term.

Before discussing the upper bounds on the multiplier term, let us briefly review a more general version of the lower bound \eqref{eq:shamir-simplified} that holds without any restrictions on the sample size $n$:
\begin{equation}
    \label{eq:shamir-full}
    \E R(\tilde{\omega}) - \inf\limits_{\omega \in \mathcal{W}_{b}}R(\omega)
      \gtrsim
      \min\left\{m^2, \min\left\{\frac{dm^2}{n}, \frac{rbm}{\sqrt{n}}\right\} +
      \frac{r^2b^2}{n}\right\},
\end{equation}
where $\tilde{\omega}$ is any linear predictor in the set $\mathcal{W}_{b}$\footnote{
The statement of Theorem 1 in \citep{shamir2015sample} allows for $\what \in \mathbb{R}^d$ instead of $\what \in \mathcal{W}_{b}$. However, Lemma~2 in \citep{shamir2015sample}, upon which the proof of the lower bound is built, requires that $\what \in \mathcal{W}_{b}$. We formulate the bound in this weaker form.
}. First, observe that the term $m^{2}$ is matched by a zero predictor that corresponds to $\tilde{\omega} = 0$.
In Proposition~\ref{prop:lowerbound} presented in the previous section, we demonstrate a distribution with $m = 0$ (i.e., $Y = 0$ almost surely) under which a provably non-zero lower bound holds for some constrained least squares estimator. However, this is not the primary reason for the suboptimality that we establish in Theorem~\ref{thm:erm-ridge-lower-bound} with respect to the above lower bound and hence we ignore the term $m^2$ in what follows.

Second, for constrained least squares , the ``slow rate'' term $rbm/\sqrt{n} + r^{2}b^{2}/n$ is matched by optimizing the first term in Theorem~\ref{thm:main_thm} with the choice $\lambda = \frac{\sqrt{R(\omega^{*}_{b})n}}{rb}$. To see that, note that $\frac{1}{n}\widehat{\Sigma}_{\lambda r^{2}}^{-1} \preceq (\lambda r^{2})^{-1} I_{d}$ and hence
\begin{align*}
  \inf_{\lambda > 0}\left\{
    \frac{\E \xi^{2} X^{\mathsf{T}}\widehat{\Sigma}_{\lambda r^{2}}^{-1}X}{n} + \frac{\lambda r^{2}b^{2}}{n}
  \right\}
  &\leq 
  \inf_{\lambda > 0}\left\{
    \frac{\E \xi^{2} X^{\mathsf{T}}X}{\lambda r^{2}} + \frac{\lambda r^{2}b^{2}}{n}
  \right\}
  \\
  &\leq
  \inf_{\lambda > 0}\left\{
    \frac{\E \xi^{2}}{\lambda} + \frac{\lambda r^{2}b^{2}}{n}
  \right\}
  \leq
  2\sqrt{\frac{R(\omega^{*}_{b})r^{2}b^{2}}{n}},
\end{align*}
where the last line follows by noting that $\E \xi^{2} = R(\omega^{*}_{b})$ and plugging in the choice of $\lambda$ defined above. Finally, since $0 \in \mathcal{W}_{b}$, we have 
\[
R(\omega^{*}_{b}) \leq R(0) \leq m^{2},
\]
and the result follows.

Since the above discussion establishes that constrained least squares does not match the $m^{2}$ term in \eqref{eq:shamir-full} but matches the slow rate term, in what follows we focus on the fast rate term $dm^{2}/n + r^{2}b^{2}/n$, that is, the lower bound stated in \eqref{eq:shamir-simplified}. We demonstrate that the constrained least squares and the ridge regression estimators match this lower bound up to logarithmic factors under several assumptions widely considered in the literature. The key observation is that
$\E \xi^{2} = R(\omega^{*}_{b}) \leq m^{2}$ and
for any $\lambda > 0$, 
$\E X^{\mathsf{T}}\widehat{\Sigma}_{\lambda r^2}^{-1}X = \E \Tr(\widehat{\Sigma}_{\lambda r^2}^{-1}\widehat{\Sigma}_{0}) \leq d$. In particular, any independence-like assumption that allows to ``split'' the noise and the leverage scores in the localized multiplier term, perhaps at the price of extra logarithmic factors, establishes a form of optimality of the
constrained least squares and the ridge estimators with respect to the lower bound~\eqref{eq:shamir-simplified}.

We begin with the simplest example, which matches the lower bound \eqref{eq:shamir-simplified} under the assumption that
the noise random variables $\xi_{i}$ are independent of the covariates $X_{1}, \cdots, X_{n}$. Note that such an assumption is weaker than assuming that the model is well-specified, since we do not assume that $\xi_{i}$ are zero mean. To simplify the notation we write $\lambda$ instead of $\lambda r^2$ in what follows.
\begin{Example}
    \label{ex:independent-noise}
    Assume that the noise variable $\xi$ is independent of $X_{1}, \dots, X_{n}$. Then
    \begin{align}
        \frac{\E\xi^2X^\mathsf{T}\widehat{\Sigma}_{\lambda}^{-1}X}{n}
        &= \frac{\left(\E \xi^{2} \right)\big(\E X^\mathsf{T}\widehat{\Sigma}_{\lambda}^{-1}X\big)}{n}
        = \frac{R(\omega^*_{b}) \E \Tr(\widehat{\Sigma}_{\lambda}^{-1}\widehat{\Sigma}_{0})}{n} \\
        &\leq \frac{R(\omega^*_{b}) \Tr((\Sigma + \lambda I_d/n)^{-1}\Sigma)}{n},
    \end{align}
    where the last step follows by Jensen's inequality and the fact that
    that $A \mapsto \Tr((A + \lambda I_d/n)^{-1}A)$ is a concave map for $A \succeq 0$.
    Note that the quantity quantity $\Tr((\Sigma + \lambda I_d/n)^{-1}\Sigma)$, known as the \emph{effective dimension} (cf.\ \citep{hsu2014random}), is never larger than $d$.
\end{Example}

Our second example shows how to upper bound the multiplier term given an $L_{\infty}$ bound on the noise variable $\xi$. Among the prior work that proves upper bounds under such an assumption see \citep[Theorem 2.1]{audibert2011robust} and \citep[Equation (7)]{mourtada2019exact}. For a closely related assumption see \citep[Assumpton A6]{Bach2013NonstronglyconvexSS}, which is also imposed in order to decouple the noise variables from the statistical leverage scores in the spirit of moment equivalence assumptions.

\begin{Example}
  Let $\Sigma = \E XX^{\mathsf{T}}$. Then, for any $\lambda > 0$ we have
  \begin{align*}
    \frac{\E\xi^2X^\mathsf{T}\widehat{\Sigma}_{\lambda}^{-1}X}{n}
    \leq \frac{\norm{\xi}_{L_{\infty}}^{2} \E X^\mathsf{T}\widehat{\Sigma}_{\lambda}^{-1}X}{n}
    \leq
    \frac{\norm{\xi}_{L_{\infty}}^{2} \Tr((\Sigma + \lambda I_d/n)^{-1}\Sigma)}{n},
  \end{align*}
  where the last step follows by Jensen's inequality (cf.\ Example~\ref{ex:independent-noise}).
\end{Example}

We now turn to an example that requires a less restrictive control on the noise variables $\xi$.
\begin{Example}
  Let $\xi_{1}, \dots, \xi_{n}$ denote independent copies of $\xi$. Then, for any $\lambda > 0$ we have
  \begin{equation}
    \label{eq:max-l2-inequality}
    \frac{\E\xi^2X^\mathsf{T}\widehat{\Sigma}_{\lambda}^{-1}X}{n}
    \leq \frac{d\|\max_{i} \xi_{i}\|_{L_{2}}^{2}}{n}.
  \end{equation}
  The above inequality follows by noting that
  \begin{align*}
    \frac{\E\xi^2X^\mathsf{T}\widehat{\Sigma}_{\lambda}^{-1}X}{n}
    &\leq \frac{\E \left(\max_{i} \xi_{i}^{2}\right) \cdot \sum_{i=1}^{n}X_{i}^\mathsf{T}\widehat{\Sigma}_{\lambda}^{-1}X_{i}}{n^{2}}
    =
    \frac{\E \left(\max_{i} \xi_{i}^{2}\right) \cdot \Tr(\widehat{\Sigma}_{\lambda}^{-1}(n\widehat{\Sigma}_{0}))}{n^{2}}
  \end{align*}
  and using the fact that $\Tr(\widehat{\Sigma}_{\lambda}^{-1}(n\widehat{\Sigma}_{0})) \leq nd$.
\end{Example}

A sub-Gaussian norm of a random variable $Z$ is defined as (see e.g., Definition 2.5.6 in \citep{Vershynin2016HDP})
\[
    \|Z\|_{\psi_2} = \inf\left\{c > 0: \E\exp(Z^2/c^2) \le 2\right\}.
\]
Below, we show how an assumption that the noise $\xi$ is well-behaved yields to a simplification of the upper bound stated in the above example. We emphasize that the above assumption does not impose any restrictions on the covariates, other than boundedness assumption used throughout this paper.

\begin{Example}
    \label{ex:subgaussian}
    Suppose that the noise random variable $\xi$ satisfies the sub-Gaussian assumption of the form
    $\|\xi\|_{\psi_2} \lesssim \|\xi\|_{L_2}$. Then, by standard sub-Gaussian maximum inequalities (e.g., \citep{ledoux1991probability}), we have
    $\|\max_i \xi_i\|_{L_2}^2 \lesssim \|\xi\|_{L_2}^2\log n = R(\omega^*)\log n$
    and hence the upper bound \eqref{eq:max-l2-inequality} simplifies to
    \[
     \frac{\E\xi^2X^\mathsf{T}\widehat{\Sigma}_{\lambda}^{-1}X}{n}
        \lesssim \frac{dR(\omega^{*}_{b}) \log n}{n}.
    \]
\end{Example}

Our final example weakens the above assumption on the distribution of the noise $\xi$ but requires an $L_{4}$--$L_{2}$ moment equivalence for the marginals $\ip{\omega}{X}$. For related work proving excess risk bounds under similar moment equivalence assumptions see \citep[Assumption 2.1]{lugosi2016risk}, \citep[Theorem 1.2]{oliveira2016lower} and \citep[Assumptions 2 and 3]{mourtada2019exact}. The proof of the below proposition is based on controlling the lower tail of random quadratic forms using a result in \citep{oliveira2016lower}. See Appendix~\ref{sec:proof-of-l4l2} for details.
\begin{Proposition}
    \label{prop:l4l2}
    Suppose that $\Sigma = \E XX^{\mathsf{T}}$ is of full rank and assume that the following holds:
    \[
        \|\xi\|_{L_4} \lesssim \|\xi\|_{L_2}
        \quad\text{and for all }\omega\in\R^{d}\text{ we have}\quad
         \E\inr{\omega, X}^4 \lesssim \left(\E\inr{\omega, X}^2\right)^2.
    \]
    Then, for any $\lambda > 0$ and any $n \gtrsim d$, it holds that
    \begin{equation}
        \label{eq:l4-l2-example}
        \frac{\E\xi^2X^\mathsf{T}\widehat{\Sigma}_{\lambda}^{-1}X}{n}
        \lesssim \frac{dm^2}{n} + \frac{r^2b^2}{n}.
    \end{equation}
\end{Proposition}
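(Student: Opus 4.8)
\emph{Proof plan.} The strategy is to bound $\E[\xi^{2}X^{\mathsf{T}}\widehat{\Sigma}_{\lambda}^{-1}X]$ by splitting the expectation according to whether the empirical second–moment matrix of the first $n-1$ covariates is comparable to $\Sigma$. I would first record two elementary consequences of the hypotheses. Since $0\in\mathcal{W}_{b}$ and $\|Y\|_{L_{\infty}}\le m$, we have $\E\xi^{2}=R(\omega^{*}_{b})\le R(0)\le m^{2}$, and $\|\xi\|_{L_{4}}\lesssim\|\xi\|_{L_{2}}$ yields $\E\xi^{4}\lesssim(\E\xi^{2})^{2}\le m^{4}$. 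Diagonalising $\Sigma$ and setting $Z=\Sigma^{-1/2}X$ (so that $\E ZZ^{\mathsf{T}}=I_{d}$ and, by the marginal moment equivalence applied to $\omega=\Sigma^{-1/2}e_{j}$, each coordinate obeys $\E Z_{j}^{4}\lesssim(\E Z_{j}^{2})^{2}=1$), a coordinatewise Cauchy--Schwarz gives
\[
\E\bigl(X^{\mathsf{T}}\Sigma^{-1}X\bigr)^{2}=\E\|Z\|^{4}=\sum_{j,k}\E Z_{j}^{2}Z_{k}^{2}\lesssim d^{2}.
\]

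Next I would introduce the event $\mathcal{E}=\bigl\{\tfrac{1}{n-1}\sum_{k=1}^{n-1}X_{k}X_{k}^{\mathsf{T}}\succeq c\Sigma\bigr\}$ for a constant $c>0$ depending only on the marginal moment--equivalence constant. The lower--tail estimate for random quadratic forms of \citep{oliveira2016lower}, applied to the $n-1$ i.i.d.\ covariates $X_{1},\dots,X_{n-1}$ under the hypothesis $\E\inr{\omega,X}^{4}\lesssim(\E\inr{\omega,X}^{2})^{2}$ and the assumption $n\gtrsim d$, shows that $\P(\mathcal{E}^{c})\lesssim 1/n$ (in fact it is exponentially small in $n$). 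On $\mathcal{E}$ we have $n\widehat{\Sigma}_{\lambda}\succeq\sum_{k=1}^{n-1}X_{k}X_{k}^{\mathsf{T}}\succeq c(n-1)\Sigma$, hence $\widehat{\Sigma}_{\lambda}\succeq\tfrac{c}{2}\Sigma$ and $\widehat{\Sigma}_{\lambda}^{-1}\preceq\tfrac{2}{c}\Sigma^{-1}$, so $X^{\mathsf{T}}\widehat{\Sigma}_{\lambda}^{-1}X\lesssim X^{\mathsf{T}}\Sigma^{-1}X$ on this event. Dropping $\ind_{\mathcal{E}}\le 1$ and using Cauchy--Schwarz together with the two displays above,
\[
\E\bigl[\ind_{\mathcal{E}}\,\xi^{2}X^{\mathsf{T}}\widehat{\Sigma}_{\lambda}^{-1}X\bigr]\lesssim\E\bigl[\xi^{2}X^{\mathsf{T}}\Sigma^{-1}X\bigr]\le\bigl(\E\xi^{4}\bigr)^{1/2}\bigl(\E(X^{\mathsf{T}}\Sigma^{-1}X)^{2}\bigr)^{1/2}\lesssim dm^{2}.
\]

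For the complementary event I would use the deterministic leverage bound $X^{\mathsf{T}}\widehat{\Sigma}_{\lambda}^{-1}X\le n$, valid because $\lambda>0$: writing $n\widehat{\Sigma}_{\lambda}=B+XX^{\mathsf{T}}$ with $B=\lambda r^{2}I_{d}+\sum_{k=1}^{n-1}X_{k}X_{k}^{\mathsf{T}}\succ 0$, the Sherman--Morrison identity gives $X^{\mathsf{T}}(n\widehat{\Sigma}_{\lambda})^{-1}X=\tfrac{X^{\mathsf{T}}B^{-1}X}{1+X^{\mathsf{T}}B^{-1}X}<1$. Since $\ind_{\mathcal{E}^{c}}$ is measurable with respect to $X_{1},\dots,X_{n-1}$ and therefore independent of $(\xi,X)=(\xi_{n},X_{n})$,
\[
\E\bigl[\ind_{\mathcal{E}^{c}}\,\xi^{2}X^{\mathsf{T}}\widehat{\Sigma}_{\lambda}^{-1}X\bigr]\le n\,\E\bigl[\ind_{\mathcal{E}^{c}}\,\xi^{2}\bigr]=n\,\P(\mathcal{E}^{c})\,\E\xi^{2}\lesssim n\,\P(\mathcal{E}^{c})\,m^{2}\lesssim m^{2}.
\]
Adding the two contributions and dividing by $n$ gives $\tfrac{1}{n}\E[\xi^{2}X^{\mathsf{T}}\widehat{\Sigma}_{\lambda}^{-1}X]\lesssim\tfrac{dm^{2}}{n}+\tfrac{m^{2}}{n}\lesssim\tfrac{dm^{2}}{n}$, which is in particular bounded by $\tfrac{dm^{2}}{n}+\tfrac{r^{2}b^{2}}{n}$.

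The main obstacle is extracting from \citep{oliveira2016lower} the precise quantitative statement needed here: that under only $L_{4}$--$L_{2}$ equivalence of the marginals and $n\gtrsim d$, the truncated empirical matrix $\tfrac{1}{n-1}\sum_{k<n}X_{k}X_{k}^{\mathsf{T}}$ dominates a fixed multiple of $\Sigma$ except on an event of probability $O(1/n)$, with the multiple depending only on the equivalence constant. The remaining ingredients --- the coordinatewise fourth--moment bound for $X^{\mathsf{T}}\Sigma^{-1}X$ and the deterministic leverage bound on $\mathcal{E}^{c}$ --- are routine.
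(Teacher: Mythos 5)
Your proof is correct and follows essentially the same route as the paper's: split on an event where the empirical second-moment matrix dominates a constant multiple of $\Sigma$ (via the lower-tail bound of Oliveira with $\delta=1/n$ and $n\gtrsim d$), on the good event apply Cauchy--Schwarz together with $\E\xi^{4}\lesssim m^{4}$ and the coordinatewise bound $\E(X^{\mathsf{T}}\Sigma^{-1}X)^{2}\lesssim d^{2}$, and on the bad event use the deterministic leverage bound $X^{\mathsf{T}}\widehat{\Sigma}_{\lambda}^{-1}X\le n$. The one genuine difference is your treatment of the bad event: the paper conditions on the event $\|\Sigma^{1/2}\widehat{\Sigma}_{\lambda}^{-1}\Sigma^{1/2}\|\le 2$ involving all $n$ covariates and must then pay $\|\xi\|_{L_{\infty}}^{2}\le(m+rb)^{2}$, which is exactly where the $r^{2}b^{2}/n$ term in \eqref{eq:l4-l2-example} originates; you instead define the event through $X_{1},\dots,X_{n-1}$ only, so that the indicator is independent of $(\xi,X)$ and the bad-event contribution factors as $n\,\Pr(\mathcal{E}^{c})\,\E\xi^{2}\lesssim m^{2}$. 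This leave-one-out refinement is valid (the Oliveira bound applied to $n-1$ samples is just as good for $n\gtrsim d$) and in fact yields the slightly stronger conclusion $\E\xi^{2}X^{\mathsf{T}}\widehat{\Sigma}_{\lambda}^{-1}X/n\lesssim dm^{2}/n$, without the $r^{2}b^{2}/n$ term; the quantitative form of \citep[Theorem 1.1]{oliveira2016lower} that you flag as the remaining obstacle is precisely the statement the paper quotes, so there is no gap.
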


As demonstrated in this section, the upper bound of order $dm^{2}/n + r^2b^2/n$ is achievable by the least squares and the ridge regression estimators under various distributional assumptions frequently considered in the literature. It is important to emphasize, however, that in Example \ref{ex:subgaussian} and Proposition \ref{prop:l4l2} we used the most favorable versions of moments equivalence assumptions, where the corresponding norms are linked via absolute constants. As we mentioned, in some cases these constants may depend on the dimension of the problem leading to suboptimal results. Indeed, once such assumptions are violated, we prove in Theorem~\ref{thm:erm-ridge-lower-bound} presented in the next section, that 
the optimistic rate $dm^{2}/n + r^2b^2/n$ is not always achievable. In contrast, the above rate can be achieved and surpassed by improper estimators (see Section~\ref{sec:exponential-improvement}).

\section{Main Results}
\label{sec:lower-bound}

In this section, we present our main result: a construction of a bounded distribution under which constrained least squares exceeds the lower bound \eqref{eq:shamir-simplified} by a factor proportional to $\sqrt{d}$. The regime considered in our lower bound is essential to establishing a separation between the performance of constrained least squares and non-linear estimators in our bounded setup. As discussed in Section~\ref{sec:corollaries}, various distributional assumptions ensure that the gap between the performance of constrained least squares and that of the lower bound \eqref{eq:shamir-simplified} is at most logarithmic.
As a result, we need to construct a bounded distribution that violates all the assumptions considered in Section~\ref{sec:corollaries}. 

If the noise variables and the leverage scores satisfy the assumptions that allow us to split them apart in the multiplier term, then we obtain an upper bound that matches the lower bound \eqref{eq:shamir-simplified}. Since Bernoulli random variables with a small parameter $p$ satisfy $L_4$--$L_2$ moment equivalence with an ill-behaved constant $1/p$, we aim to construct a distribution such that the noise random variables and the leverage scores both approximately follow Bernoulli distributions with a small parameter that depends on the dimension $d$. Besides, the noise variables and the leverage scores need to be \emph{highly correlated}; otherwise, the multiplier term would be too small. We remark that our construction could be considered somewhat extreme only with respect to the constants appearing in the moment equivalence assumptions discussed in Section~\ref{sec:corollaries}. At the same time, our distribution is bounded with the favorable choice of constants, the Bernstein class assumption is satisfied and therefore, by the upper bound \eqref{eq:excess-risk-suboptimal-b-plus-m}, the constrained least squares estimator satisfies non-trivial fast rate excess risk guarantee, making the construction of our main lower bound more challenging.
  
Let us now present our construction. For simplicity, we assume that $\sqrt{d}$ is an integer in what follows.
Let $\mathbf{1}$ denote an all-ones vector. For a support set $S \subseteq \{1, \dots, d\}$, let
$\mathbf{1}_{S}$ denote a vector such that $(\mathbf{1}_{S})_{i} = 1$ if $i \in S$ and $0$ otherwise.
Let $\mathcal{S}_{\sqrt{d}} = \{ S \subseteq \{1, \dots, d\} : \abs{S} = \sqrt{d} \}$.
We consider the following distribution:
\begin{equation}
\label{eq:bad-distribution}
  (X, Y)  = \begin{cases}
  (d^{-1}\mathbf{1}, 1) & \mbox{ with probability }1 - d^{-1/2}, \\
  (d^{-1/4}\mathbf{1}_{S}, 0) & \mbox{ with probability }d^{-1/2},\mbox{ where }
      S \sim \text{Uniform}\left(\mathcal{S}_{\sqrt{d}}\right).
  \end{cases}
\end{equation}
A simple calculation shows that $\omega^{*}_{\infty} \approx \frac{1}{2} \mathbf{1}$ and hence for $b \gtrsim \sqrt{d}$ we have
$\omega^{*}_{b} = \omega^{*}_{\infty}$. In particular, $\xi_{i}^{2}$ is smaller than $1$ for the ``high probability'' points
$(X_{i}, Y_{i}) = (d^{-1}\mathbf{1}, 1)$, while $\xi_{i}^{2} \approx \sqrt{d}$ for the ``low probability'' points 
$(X_{i}, Y_{i}) =  (d^{-1/4}\mathbf{1}_{S}, 0)$. This establishes that $\xi_{i}^{2}$ behaves as Bernoulli random variables. Similarly, since all the ``high probability'' points are exactly the same, they essentially have zero leverage.
On the other hand, the ``low probability'' points all have high leverage, thus the leverage scores also approximately follow the Bernoulli distribution. Finally, since $\xi_{i}^{2}$ is large exactly for the high leverage points, the squared noise random variables are correlated with the leverage scores. Intuitively, the multiplier term (i.e., the first term in Theorem~\ref{thm:main_thm}~and~\ref{thm:main_thm_second}) scales as $d^{3/2}/n$ under the distribution \eqref{eq:bad-distribution}, 
while the lower bound \eqref{eq:shamir-simplified} scales only as $d/n$ provided that $b \sim \sqrt{d}$.

The main result of our paper is presented below. The proof is deferred to Section~\ref{sec:erm-lower-bound-proof}.
\begin{Theorem}
\label{thm:erm-ridge-lower-bound}
    Suppose that the distribution $P$ of $(X, Y)$ is given by 
    \eqref{eq:bad-distribution}. Then, for any constrained least squares estimator $\what_{b}^{\operatorname{ERM}}$ defined by \eqref{eq:erm}, the following lower bound holds, provided that $d$ is large enough, $b \sim \sqrt{d}$ and 
    $n \gtrsim d^{3}\log d$:
    \[
    \E R(\what_{b}^{\operatorname{ERM}}) - R(\omega^*_b)
      \gtrsim \frac{d^{3/2}}{n}.
    \]
    For the distribution \eqref{eq:bad-distribution} we have $r = 1$ and $m=1$; hence, the lower bound
    \eqref{eq:shamir-simplified} scales only as $d/n$.
\end{Theorem}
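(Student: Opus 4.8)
The plan is to exhibit a specific constrained least squares solution whose excess risk is provably $\gtrsim d^{3/2}/n$, leveraging the freedom in the choice of $\what_b^{\operatorname{ERM}}$ (the statement quantifies over \emph{any} ERM, but since the minimizer over $\mathcal{W}_b$ is essentially unique here once $b \sim \sqrt d$ and $n$ is large, it suffices to analyze \emph{the} ERM up to lower-order terms). First I would set up the bookkeeping for the sample drawn from \eqref{eq:bad-distribution}: let $N$ be the (random) number of ``low probability'' points $(d^{-1/4}\mathbf{1}_S,0)$ among the $n$ samples, so $N \sim \operatorname{Binomial}(n, d^{-1/2})$ and $\E N = n d^{-1/2}$; since $n \gtrsim d^3 \log d$, a Chernoff bound gives $N \sim n/\sqrt d$ with overwhelming probability, and moreover all the supports $S$ appearing are distinct with high probability (a birthday-type bound using $|\mathcal{S}_{\sqrt d}| = \binom{d}{\sqrt d}$, which dwarfs $N^2$). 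The remaining $n - N \sim n$ points are all exactly equal to $(d^{-1}\mathbf{1}, 1)$.

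Next I would compute the population quantities: a direct calculation gives $\omega^*_\infty$ to be approximately $\tfrac12\mathbf{1}$ (balancing the loss $1-d^{-1/2})(1 - \langle\omega, d^{-1}\mathbf 1\rangle)^2$ against $d^{-1/2}\E_S\langle\omega, d^{-1/4}\mathbf 1_S\rangle^2$), and since $\|\tfrac12\mathbf 1\| = \tfrac12\sqrt d \lesssim b$, we get $\omega^*_b = \omega^*_\infty$ and $R(\omega^*_b) \sim 1$ — more precisely $R(\omega^*_b) = \Theta(1)$ with the noise satisfying $\xi^2 \le 1$ on the dense points and $\xi^2 = \langle\omega^*_b, d^{-1/4}\mathbf 1_S\rangle^2 \sim d$ on the sparse points, consistent with the ``$\xi^2$ behaves like $\sqrt d \cdot \operatorname{Bernoulli}(d^{-1/2})$'' heuristic. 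The key structural point is that conditionally on the data, the ERM over $\mathcal{W}_b$ (with $b\sim\sqrt d$ large enough that the constraint is slack) essentially coincides with the unconstrained least squares fit, which, because each sparse support $S_j$ is distinct and disjoint-ish from the others and carries a coordinate that appears in \emph{no other} sample point except the dense point, will fit the $N$ sparse responses (all $0$) almost exactly while also fitting the dense response $1$: the hat-matrix leverage of each sparse point is close to $1$. I would make this precise via the explicit form $\what^{\operatorname{ERM}}_b = (n\widehat\Sigma_0)^{-1}\sum_i Y_i X_i$ and a block decomposition of $\widehat\Sigma_0$ into the rank-one dense part plus $\sum_j d^{-1/2}\mathbf 1_{S_j}\mathbf 1_{S_j}^{\mathsf T}$, using matrix Bernstein/Chernoff (Theorem cited from \citep{tropp15}) to control the spectrum of the sparse block.

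Then I would lower bound the excess risk. Write $\what := \what^{\operatorname{ERM}}_b$ and decompose $R(\what) - R(\omega^*_b) = \|\what - \omega^*_b\|_{L_2}^2 = \E\langle \what - \omega^*_b, X\rangle^2$, which splits as $(1-d^{-1/2})\langle\what-\omega^*_b, d^{-1}\mathbf 1\rangle^2 + d^{-1/2}\E_S\langle\what - \omega^*_b, d^{-1/4}\mathbf 1_S\rangle^2$. The second term is the engine: because the ERM sets $\langle\what, d^{-1/4}\mathbf 1_{S_j}\rangle \approx 0$ for the observed supports $S_j$ but $\langle\omega^*_b, d^{-1/4}\mathbf 1_{S_j}\rangle \sim \sqrt d$, and because a fresh random $S$ overlaps a typical observed $S_j$ only in expectation $\sqrt d \cdot \sqrt d / d = 1$ coordinate so the fit does not transfer, I expect $\E_S\langle\what-\omega^*_b, d^{-1/4}\mathbf 1_S\rangle^2 \gtrsim d$ conditionally on the good event, giving a contribution $\gtrsim d^{-1/2}\cdot d = \sqrt d$ to $\|\what-\omega^*_b\|_{L_2}^2$. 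But wait — $\sqrt d$ alone would be far too large; the point is that $\what$ also \emph{shrinks} on the coordinates outside all observed supports, and the correct accounting shows the transferable part of the fit is exactly of order $1/n$ per unit of squared noise, so that the surviving excess risk is $\sim (\text{number of sparse points}) \times (\text{squared noise per point}) / n = (n/\sqrt d)\cdot d / n^2 \cdot$ (geometry factor) — I would track this through the leverage-score representation $\E\xi^2 X^{\mathsf T}\widehat\Sigma_0^{-1}X/n$ appearing in Theorem~\ref{thm:main_thm} and show the lower bound matches it up to constants, yielding $d^{3/2}/n$. The main obstacle, and where the ``delicate exact computations'' live, is handling the interaction between the single shared dense direction $\mathbf 1$ and the $N$ sparse directions in inverting $\widehat\Sigma_0$: the dense point has tiny norm $d^{-1}$ but high multiplicity $n - N$, while each sparse direction has larger norm but multiplicity one, and one must show the ERM is forced to nearly interpolate the sparse points (driving the overfitting) while the near-interpolation does not generalize to a fresh sparse $S$ — this requires careful control of $\E_S$ over the overlap pattern between a new $S$ and the union of observed supports, for which I would use concentration of the overlap $|S \cap S_j|$ around its mean together with a union bound over the $N$ observed supports.
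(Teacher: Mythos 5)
Your proposal rests on a structural claim about the sample that is false in the stated regime, and the main mechanism you build on it does not survive. You assert that each sparse covariate $d^{-1/4}\mathbf{1}_{S_j}$ ``carries a coordinate that appears in no other sample point except the dense point,'' so that its hat-matrix leverage is close to $1$ and the least squares fit nearly interpolates the $N$ sparse responses. With $n \gtrsim d^{3}\log d$ there are $N \approx n d^{-1/2}$ sparse points and each of the $d$ coordinates is covered by $\approx N d^{-1/2} = n/d \gg 1$ of them, so the sparse supports overlap heavily and no private coordinates exist. A direct spectral estimate (the sparse block $A=\sum_{i\in I}X_iX_i^{\mathsf T}$ has top eigenvalue $\sim |I|d^{-1/2}$ along $\mathbf{1}$ and remaining eigenvalues $\sim |I|d^{-1}$) gives leverage $X_j^{\mathsf T}(n\widehat{\Sigma}_0)^{-1}X_j \sim d^{3/2}/n \ll 1$, and the fitted value at a sparse point stays $\approx \tfrac12 d^{1/4}$, nowhere near $0$. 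So the ``overfit-the-sparse-points, fit-doesn't-transfer'' engine, and the intermediate claim $\E_S\langle\what-\omega^*_b, d^{-1/4}\mathbf{1}_S\rangle^2 \gtrsim d$, are not available; you notice yourself that this would be ``far too large,'' but the repair you sketch (tracking a ``transferable part of order $1/n$ per unit of squared noise'' and matching the multiplier term of Theorem~\ref{thm:main_thm}) is only a heuristic appeal to an \emph{upper} bound and never identifies the direction in which the ERM actually deviates, which is what a lower bound requires. (Two smaller errors: on sparse points $\xi^2=\langle\omega^*_b,d^{-1/4}\mathbf{1}_S\rangle^2\approx\tfrac14\sqrt d$, not $\sim d$; and the distinctness of the supports is irrelevant.)

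The paper's proof isolates exactly this missing object: the centered hit-count vector $\zeta = v - |I|d^{-1/2}\mathbf{1}$ with $\langle\zeta,\mathbf{1}\rangle=0$ and $\|\zeta\|^2\sim n$. Via Sherman--Morrison one gets the exact formula $\what_{\infty} = \beta\,(\mathbf{1}-A^{-1}\zeta)$ with $\beta\gtrsim 1$ on a good event, plus the bound $\|\what_\infty\|^2 \lesssim d$, so for $b\sim\sqrt d$ the constraint is inactive, $\widehat\Sigma$ is invertible, and \emph{any} constrained ERM coincides with $\what_\infty$ (this also handles the ``any ERM'' quantifier, which your ``essentially unique'' remark leaves unjustified). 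The excess risk is then lower bounded through the curvature inequality $R(\omega)-R(\omega^*_b)\geq\tfrac12 d^{-3/2}\|\omega-\omega^*_b\|^2$ by projecting $\what_\infty-\omega^*_b$ onto $\zeta/\|\zeta\|$: since $\langle\zeta,\mathbf{1}\rangle=0$, only the $A^{-1}\zeta$ part survives, and matrix Chernoff/Bernstein estimates give $\zeta^{\mathsf T}A^{-1}\zeta\gtrsim d^{3/2}$, whence the excess risk is $\gtrsim d^{-3/2}\,(\zeta^{\mathsf T}A^{-1}\zeta)^2/\|\zeta\|^2 \gtrsim d^{3/2}/n$ with probability at least $1/2$. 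In short, the suboptimality is a small, noise-driven tilt of the least squares solution in directions orthogonal to $\mathbf{1}$ where the population covariance has eigenvalue $\sim d^{-3/2}$, not near-interpolation of high-leverage points; your proposal would need to be rebuilt around this (or an equivalent) identification to close the gap.
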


We now comment on the above result. Recall that the aim of the construction \eqref{eq:bad-distribution} is to
maximize the multiplier term under boundedness constraints on the underlying distribution. In view of the lower bound \eqref{eq:shamir-simplified}, the parameters $m, r, b$ are chosen the most relevant way in the sense explained below. First, because of the homogeneity, we may always set $m = 1$. Second, the choice $b \sim \sqrt{d}$ is natural for $d$-dimensional vectors, particularly, for the underlying parameter $w^{*}_{b}$. Finally, the scaling $dm^{2} \sim r^{2}d^{2}$ equalizes the two terms in the lower bound \eqref{eq:shamir-simplified} and according to the results in Section \ref{sec:exponential-improvement} leaves open the possibility that in such regimes improper estimators offer no statistical improvements. Indeed, the best upper bound for non-linear estimators scales as $dm^{2}/n$. It also follows from the proof of Theorem~\ref{thm:erm-ridge-lower-bound} that under the distribution \eqref{eq:bad-distribution}, the constrained least squares estimator $\what_{b}^{\operatorname{ERM}}$ coincides with the global least squares solution, so that any larger value of $b$ can be chosen in Theorem \ref{thm:erm-ridge-lower-bound} without changing the statement. More importantly, our construction can be extended to a family of bounded distributions with the lower bound scaling as $d^{1 + \alpha}/n$ for any $\alpha \in [0, 1/2]$, while improper estimators (as shown in Section \ref{sec:exponential-improvement}) can still achieve the optimal $d/n$ rate. In all these cases, there is still a gap between the lower bound \eqref{eq:shamir-simplified} and the performance of constrained least squares. To simplify the presentation, we focus only on one particular distribution \eqref{eq:bad-distribution}, which maximizes the exhibited performance gap.

\subsection{Improvements via Non-Linear Predictors}
\label{sec:exponential-improvement}

In this section, we observe that non-linear predictors can surpass the lower bound \eqref{eq:shamir-simplified} that holds for linear predictors in $\mathcal{W}_{b}$. In particular, via known results in the literature, we first demonstrate that the Vovk-Azoury-Warmuth (VAW) forecaster yields an exponential improvement on the boundedness constants $b$ and $r$ compared to the lower bound \eqref{eq:shamir-simplified}. We remark that in our setup, the VAW forecaster was previously used to \emph{match} the lower bound \eqref{eq:shamir-simplified} in \citep{shamir2015sample}; the difference in the result below is that we tune the regularization parameter and use the resulting upper bound to demonstrate a statistical separation between proper and improper algorithms, rather than matching a lower bound that holds for proper algorithms. Finally, we discuss a less known modification of the VAW forecaster due to Forster and Warmuth \citep{forster2002relative} that can completely remove the dependence on the boundedness constants $b$ and $r$ and removes any assumptions on the distribution of the covariates.

The VAW forecaster is defined as follows. Given a (random) sample $S_{n}$, a regularization parameter $\lambda > 0$, and any point $X \in \mathbb{R}^{d}$ we first compute
\begin{equation}
\label{eq:vaw}
    \widehat{\omega}_{\lambda, n}(X) =
    \argmin\limits_{\omega \in \mathbb{R}^d}\sum\limits_{i = 1}^n(Y_i - \langle \omega, X_i\rangle)^2
    + \lambda\|\omega\|^2 + \langle \omega, X \rangle^2
\end{equation}
and then output a prediction 
\[
  \widehat{f}^{\operatorname{VAW}}_{\lambda}(X) =
  \langle\widehat{\omega}_{\lambda, n}(X), X \rangle.
\]
Thus, in order to make a prediction, a new linear predictor $\langle \what_{\lambda, n}(X), \cdot \rangle$ is computed for every point $X$ and in particular, the VAW forecaster is non-linear. For background on the VAW forecaster and regret bounds we refer to \citep*{vovk1998competitive, cesa2006prediction, orabona2019modern}. Below, for any predictor $f(\cdot)$ we denote $R(f(X)) = \E(Y - f(X))^2$.
Our key observation is that the sequence of weights $\widehat{\omega}_{\lambda^*, j}(X)$ for $j = 1, \ldots, n$ and $\lambda^* = \frac{dm^2}{b}$ can be immediately translated into a non-linear estimator $\widetilde{f}^{\operatorname{VAW}}_{\lambda^*}(\cdot)$ satisfying in the notation of the lower bound \eqref{eq:shamir-simplified}:
\begin{equation}
        \label{eq:orabona-vaw-second}
        \E R(\widetilde{f}^{\operatorname{VAW}}_{\lambda^*}(X)) - R(\omega^*_b) \lesssim \frac{dm^{2}}{n}\log\left(1 + \frac{r^{2}b^{2}n}{d^2m^2}\right).
\end{equation}
The proof of this fact follows from the regret bound for the VAW forecaster (see the survey \citep[Theorem 7.25]{orabona2019modern}) and the standard online-to-batch conversion. The above bound yields an exponential improvement on the boundedness constants $b$ and $r$ compared to the lower bound \eqref{eq:shamir-simplified}.

More importantly, there exists a modification of the VAW forecaster due to \citep{forster2002relative} that can remove the logarithmic factor in \eqref{eq:orabona-vaw-second}.\footnote{
  We are thankful to Manfred Warmuth for pointing us to the modified VAW forecaster.
} Let us introduce the modified VAW forecaster. 
Given a sample $S_{n}$ and any $X \in \R^{d}$, let $h_{X} = X^{\mathsf{T}} (\sum_{i=1}^{n} X_{i}X_{i}^{\mathsf{T}} + XX^{\mathsf{T}})^{\dagger}X$ denote the leverage of the point $X$ with respect to
the covariates $X_{1}, \dots, X_{n}, X$, where the notation $A^{\dagger}$ denotes the Moore-Penrose inverse of a matrix $A$. The modified VAW predictor $\widehat{f}$ is then defined pointwise as follows:
\begin{equation}
    \label{eq:modified-vaw-pointwise}
    \widehat{f}(X) = (1 - h_{X})\widehat{f}^{\operatorname{VAW}}_{0}(X).
\end{equation}
Thus, the above function outputs the predictions of the VAW forecaster (with $\lambda = 0$, where the VAW predictions are computed by taking a Moore-Penrose inverse of the sample covariance matrix), albeit reweighted by the factor $(1 - h_{X})$. Intuitively, the above predictor avoids making large errors for high leverage points. The following theorem describes the main property of this estimator.

\begin{theorem*}[Theorem 6.2 in \citep{forster2002relative}]
Let $\widehat{f}(\cdot)$ denote the non-linear predictor defined in \eqref{eq:modified-vaw-pointwise}. Let $P$ be any distribution (with possibly unbounded covariates) satisfying $\|Y\|_{L_{\infty}} \le m$. Then, for any $d, n > 0$, the following holds:
\begin{equation}
  \label{eq:optimal_bound}
    \E R(\widehat{f}(X)) - \inf\limits_{\omega \in \mathbb{R}^d}R(\omega)
        \lesssim \frac{dm^{2}}{n}.
\end{equation}
\end{theorem*}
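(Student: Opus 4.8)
The statement is Theorem~6.2 of \citep{forster2002relative}; the plan is to give a short self-contained proof via a leave-one-out / exchangeability reduction to a single deterministic inequality about the ``hat matrix'' of an $(n+1)$-point design. Throughout, fix $\omega^{*}_{\infty} \in \argmin_{\omega \in \mathbb{R}^{d}} R(\omega)$ (if the infimum is not attained, replace it by an $\varepsilon$-minimizer and let $\varepsilon \to 0$ at the end).

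First I would introduce $N = n+1$ i.i.d.\ copies $(X_{1}, Y_{1}), \dots, (X_{N}, Y_{N})$ of $(X, Y)$, let $\mathbf{Z} \in \mathbb{R}^{N \times d}$ be the matrix with rows $X_{t}^{\mathsf{T}}$, and set $A = \mathbf{Z}^{\mathsf{T}}\mathbf{Z} = \sum_{t=1}^{N} X_{t}X_{t}^{\mathsf{T}}$, $H = \mathbf{Z}A^{\dagger}\mathbf{Z}^{\mathsf{T}}$, $\mathbf{y} = (Y_{1}, \dots, Y_{N})^{\mathsf{T}}$, and $\widehat{\mathbf{e}} = (I - H)\mathbf{y}$. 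The matrix $H$ is symmetric and idempotent, so its diagonal entries $h_{t} := H_{tt}$ lie in $[0,1]$, $\Tr(H) = \operatorname{rank}(\mathbf{Z}) \le d$, and $\|\widehat{\mathbf{e}}\|^{2} = \mathbf{y}^{\mathsf{T}}(I - H)\mathbf{y} = \min_{\omega}\sum_{t=1}^{N}(\langle \omega, X_{t}\rangle - Y_{t})^{2}$. The crucial bookkeeping step is the identity: if $\widehat{f}^{(-t)}$ denotes the modified VAW predictor \eqref{eq:modified-vaw-pointwise} trained on $\{(X_{s}, Y_{s})\}_{s \ne t}$ and evaluated at $X_{t}$, then (checking the pseudoinverse algebra) the leverage in \eqref{eq:modified-vaw-pointwise} equals $X_{t}^{\mathsf{T}}A^{\dagger}X_{t} = h_{t}$, the underlying $\widehat{f}^{\operatorname{VAW}}_{0}(X_{t})$ equals $X_{t}^{\mathsf{T}}A^{\dagger}\sum_{s \ne t}Y_{s}X_{s} = (H\mathbf{y})_{t} - h_{t}Y_{t}$, and hence
\[
  \widehat{f}^{(-t)}(X_{t}) = (1 - h_{t})\left((H\mathbf{y})_{t} - h_{t}Y_{t}\right).
\]
Using $(H\mathbf{y})_{t} = Y_{t} - \widehat{e}_{t}$, a one-line computation then gives the residual decomposition $Y_{t} - \widehat{f}^{(-t)}(X_{t}) = (1 - h_{t})\widehat{e}_{t} + h_{t}(2 - h_{t})Y_{t}$.

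Next I would square this, sum over $t$, and compare to $\|\widehat{\mathbf{e}}\|^{2}$. Using $(1 - h_{t})^{2} = 1 - h_{t}(2 - h_{t})$, the pure-$\widehat{e}$ contribution is $\sum_{t}\widehat{e}_{t}^{2} - \sum_{t}h_{t}(2 - h_{t})\widehat{e}_{t}^{2}$; the negative part is then spent to absorb the cross term by completing the square in $\widehat{e}_{t}$, namely $2(1 - h_{t})h_{t}(2 - h_{t})Y_{t}\widehat{e}_{t} - h_{t}(2 - h_{t})\widehat{e}_{t}^{2} \le h_{t}(2 - h_{t})(1 - h_{t})^{2}Y_{t}^{2} \le 2h_{t}Y_{t}^{2}$ for $h_{t} \in [0,1]$; and the pure-$Y$ term obeys $h_{t}^{2}(2 - h_{t})^{2} = [h_{t}(2 - h_{t})]^{2} \le h_{t}(2 - h_{t}) \le 2h_{t}$. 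Summing and using $\sum_{t}h_{t} = \Tr(H) \le d$ together with $\max_{t}Y_{t}^{2} \le m^{2}$ almost surely yields the deterministic inequality
\[
  \sum_{t=1}^{N}\left(Y_{t} - \widehat{f}^{(-t)}(X_{t})\right)^{2} \le \|\widehat{\mathbf{e}}\|^{2} + 4dm^{2} \le \sum_{t=1}^{N}(\langle \omega^{*}_{\infty}, X_{t}\rangle - Y_{t})^{2} + 4dm^{2}.
\]
Finally I would divide by $N = n+1$ and take expectations: since the modified VAW predictor is a symmetric function of its training sample and the $(X_{t}, Y_{t})$ are i.i.d., exchangeability gives $\tfrac{1}{N}\E\sum_{t}(Y_{t} - \widehat{f}^{(-t)}(X_{t}))^{2} = \E(Y - \widehat{f}(X))^{2} = \E R(\widehat{f})$ for $\widehat{f}$ trained on $n$ samples, while $\tfrac{1}{N}\E\sum_{t}(\langle \omega^{*}_{\infty}, X_{t}\rangle - Y_{t})^{2} = R(\omega^{*}_{\infty}) = \inf_{\omega \in \mathbb{R}^{d}} R(\omega)$; hence $\E R(\widehat{f}) - \inf_{\omega \in \mathbb{R}^{d}} R(\omega) \le 4dm^{2}/(n+1) \lesssim dm^{2}/n$ (the constant can be sharpened, but any absolute constant suffices here).

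I expect the main obstacle to be the identity $\widehat{f}^{(-t)}(X_{t}) = (1 - h_{t})((H\mathbf{y})_{t} - h_{t}Y_{t})$ with correct pseudoinverse bookkeeping: showing that the matrix $A^{\dagger}$ built from all $N$ points is the one that governs the prediction when point $t$ is singled out as the test point, and identifying $X_{t}^{\mathsf{T}}A^{\dagger}\sum_{s \ne t}Y_{s}X_{s}$ with $(H\mathbf{y})_{t} - h_{t}Y_{t}$ and the leverage with $h_{t}$. Relatedly, one must find the decomposition of $Y_{t} - \widehat{f}^{(-t)}(X_{t})$ in which the two shrinkage factors $(1 - h_{X})$ of \eqref{eq:modified-vaw-pointwise} produce exactly the cancellation with the $-\sum_{t}h_{t}(2 - h_{t})\widehat{e}_{t}^{2}$ term; once this algebra is in place the remaining estimates are a routine completing-the-square bound and an exchangeability averaging.
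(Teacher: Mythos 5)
Your proposal is correct and follows essentially the same route as the paper's proof: the leave-one-out/exchangeability reduction over $n+1$ samples, the pseudoinverse identity $\widehat{f}^{(-t)}(X_t)=(1-h_t)\left(\langle\what_\infty,X_t\rangle-h_tY_t\right)$ with $h_t=X_t^{\mathsf{T}}\tilde{\Sigma}^{\dagger}X_t$, a pointwise comparison with the full-sample least squares residual bounded by a constant times $h_tY_t^2$, and $\sum_t h_t\le d$. The only differences are cosmetic: you parametrize the quadratic bound through the hat-matrix residuals $\widehat{e}_t$ and complete the square in $\widehat{e}_t$ (giving $4h_tY_t^2$ where the paper optimizes over $\langle\what_\infty,X_t\rangle$ to get $2h_tY_t^2$), and you compare directly to $\omega^*_\infty$ rather than via $\E\inf\le\inf\E$.
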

\par
For the reader's convenience, we reproduce the proof of the above result in Appendix~\ref{section:proof-of-optimal-bound} and compare some of the steps with the proof of Theorem~\ref{thm:main_thm_second}. Observe that the bound \eqref{eq:optimal_bound} is closely related to the upper bound \eqref{eq:well-specified-excess-risk} that holds for unconstrained least squares in the well-specified setup with Gaussian design: both bounds do not depend on the magnitude of the covariates, specific properties of the covariance structure, and the norm of the optimal linear predictor $\omega^* = \arginf_{\omega\in \mathbb{R}^d}R(\omega)$. The difference is that $R(\omega^*)$ is replaced by  $m^2$. It is reported in \citep{forster2002relative} that the authors could not prove a bound similar to \eqref{eq:optimal_bound} for the least squares estimator. Given the lower bound~\eqref{eq:shamir-simplified} and Proposition~\ref{prop:lowerbound}, it is not surprising. In our setting, the performance of least squares is affected by the boundedness constants $r, b$, which can be arbitrarily bad in the theorem above. 

Finally, note that once the pseudoinverse of the sample covariance matrix is computed, pointwise evaluation of the non-linear estimator \eqref{eq:non-linear-predictor-prediction} can be done in $O(d^{2})$ operations. In contrast, only $O(d)$ operations are needed to evaluate a linear function. It is unknown whether a more computationally efficient algorithm that matches the upper bounds \eqref{eq:orabona-vaw-second} and \eqref{eq:optimal_bound} exists and, more broadly, whether there exist inherent statistical-computational trade-offs needed to attain the optimal rate in the distribution-free setting. The search for computationally efficient improper algorithms in a related phenomenon observed for logistic regression (cf.\ Section~\ref{sec:related-work}) is currently an active line of research \citep{pmlr-v75-foster18a, mourtada2019improper, jezequel2020efficient}.

\section{Proofs}
\label{sec:proofs}

\subsection{Proof of Theorem \ref{thm:main_thm}}
\label{section:erm-proof}

This section is devoted to the proof of Theorem~\ref{thm:main_thm}. First, notice that by convexity of the quadratic loss and convexity of the class $\mathcal{W}_{b}$, the following inequality holds sometimes called the \emph{Bernstein condition} in the literature:
\begin{equation}
    \label{eq:bernstein_eq}
    R(\omega) - R(\omega^*_b) \geq \E(\langle \omega - \omega_{b}^{*}, X\rangle)^2 = \norm{\omega - \omega^{*}_{b}}_{L_{2}}^{2}
\end{equation}

Our analysis is split into three parts. First, we provide the excess risk bound in terms of the localized complexities corresponding to the quadratic and multiplier terms. Then, we prove sharp bounds for both of them. To simplify the notation, in what follows we write $\what, \omega^{*}$ instead of $\what_{b}^{\operatorname{ERM}}, \omega^{*}_{b}$.

\myparagraph{Localization}
For any $\omega, x \in \mathbb{R}^d, y \in \mathbb{R}$ define the excess loss functional
\[
{\cal L}_\omega(x,y) = (\inr{\omega,x}-y)^2-(\inr{\omega^*,x}-y)^2.
\]
Let us split the empirical excess risk to the quadratic and multiplier components as follows:
\begin{align*}
    P_n {\cal L}_\omega = & \frac{1}{n} \sum_{i=1}^n {\cal L}_\omega(X_i,Y_i)
    = \underbrace{\frac{1}{n} \sum_{i=1}^n \inr{\omega-\omega^*,X_i}^2}_{P_n {\cal Q}_{\omega-\omega^*}} + \underbrace{\frac{2}{n}\sum_{i=1}^n (\inr{\omega^*,X_i}-Y_i)\cdot \inr{\omega-\omega^*,X_i}}_{P_n {\cal M}_{\omega-\omega^*}},
\end{align*}
where $P_n {\cal Q}_{\omega}$ and $P_n {\cal M}_{\omega}$ denote empirical quadratic and multiplier processes respectively, both indexed by $\omega \in \R^{d}$. We denote their population counterparts by
$$
    \E {\cal Q}_{\omega-\omega^*} = \norm{\omega - \omega^{*}}^{2}_{L_{2}}
    \quad\text{and}\quad
    \E {\cal M}_{\omega-\omega^*} = \E2(\inr{\omega^*,X}-Y)\cdot \inr{\omega-\omega^*,X}.
$$
Since $\widehat{\omega}$ defined in \eqref{eq:erm} minimizes the empirical excess risk $P_{n}{\mathcal{L}_{\omega}}$ over $\omega \in \mathcal{W}_{b}$, we have $P_n {\cal L}_{\widehat{\omega}} \leq 0$. Thus, it suffices to show that, with high probability, if $\E {\cal L}_\omega \geq q^2$ for some $q > 0$, then $P_n {\cal L}_\omega >0$. This will imply by contradiction that with high probability $\E {\cal L}_{\widehat{\omega}} = R(\widehat{\omega}) - R(\omega^*) \le q^2$.

Recall that $\|\omega\|_{L_2}^2 = \E \inr{\omega, X}^2$. As a first step, let us show that if $\|\omega-\omega^*\|^2_{L_2}$ is larger than a maximum of suitably defined fixed points (see below), then $P_n {\cal L}_\omega > 0$, thus implying that $\|\what-\omega^*\|^2_{L_2}$ is small.

Fix some $\omega \in \mathcal{W}_{b}$ and let $s^2 = \|\omega - \omega^*\|^2_{L_2}$. We aim to investigate under what assumptions on $s$ it holds that $P_n {\cal L}_\omega > 0$. Using the Bernstein assumption \eqref{eq:bernstein_eq} we have
 \begin{align}
    \label{eq:excess-loss-decomposition}
    P_{n} {\cal L}_{\omega} &= P_n {\cal Q}_{\omega-\omega^*} + P_n {\cal M}_{\omega-\omega^*} 
    \\
    = & \left(P_n {\cal Q}_{\omega-\omega^*} - \E {\cal Q}_{\omega-\omega^*}\right) + \left(P_n {\cal M}_{\omega-\omega^*} - \E {\cal M}_{\omega-\omega^*}\right) + \E {\cal L}_\omega
    \\
    \geq & \left(P_n {\cal Q}_{\omega-\omega^*} - \E {\cal Q}_{\omega-\omega^*}\right) + \left(P_n {\cal M}_{\omega-\omega^*} - \E {\cal M}_{\omega-\omega^*}\right) + s^{2}.
\end{align}
Observe that if $s$ satisfies
\begin{align}
    \label{eq:fixedpoints}
    &\sup_{\omega \in \mathcal{W}_{b}, \|\omega - \omega^*\|_{L_2} \le s} \left|P_n {\cal Q}_{\omega-\omega^*} - \E {\cal Q}_{\omega-\omega^*}\right| \leq \frac{s^2}{10} \nonumber
    \\
    &\quad\text{and}\quad
    \sup_{\omega \in \mathcal{W}_{b}, \|\omega - \omega^*\|_{L_2} \le s} \left|P_n {\cal M}_{\omega-\omega^*} - \E {\cal M}_{\omega-\omega^*}\right| \leq \frac{s^2}{10},
\end{align}
then $P_{n} {\cal L}_{\omega} \geq -\frac{2s^{2}}{10} + s^{2} > 0$. The inequality $P_{n} {\cal L}_{\omega} > 0$ extends to all $\omega' \in \mathcal{W}_{b}$ such that $\|\omega^{\prime}  - \omega^*\|_{L_2} \ge s$ via a standard star-shapedness argument of the class $\mathcal{W}_{b}$. To see that, suppose that 
$\|\omega^{\prime}  - \omega^*\|_{L_2} = s' > s$. Then,
\begin{equation}
    \label{eq:star-shapedness}
    \omega' - \omega^{*}
    =  \frac{s'}{s}\left(
        \underbrace{\frac{s}{s'}\omega' + \frac{s' - s}{s'}\omega^{*}}_{\omega'_{s}} - \omega^{*}
        \right)
    = \frac{s'}{s}\left(\omega'_{s} - \omega^{*}\right).
\end{equation}
By convexity of $\mathcal{W}_{b}$, $\omega'_{s} \in \mathcal{W}_{b}$. Further, 
$\|\omega^{\prime}_{s}  - \omega^*\|_{L_2} = \frac{s}{s'}\|\omega^{\prime}  - \omega^*\|_{L_2} = s$.
Hence, it follows that
\begin{align*}
    P_{n} {\cal L}_{\omega'}
    &= P_n {\cal Q}_{\omega'-\omega^*} + P_n {\cal M}_{\omega'-\omega^*}
    \\
    &= \left(\frac{s'}{s}\right)^{2}P_n {\cal Q}_{\omega'_{s}-\omega^*} + \frac{s'}{s}P_n {\cal M}_{\omega'_{s}-\omega^*} \geq
    \frac{s'}{s}\left(P_n {\cal Q}_{\omega'_{s}-\omega^*} + P_n {\cal M}_{\omega'_{s}-\omega^*}\right)
    > 0.
\end{align*}

Therefore, we are interested in the smallest value of $s$ that satisfies the two conditions in \eqref{eq:fixedpoints}. This leads to the definition of the (random) fixed point corresponding to the quadratic term:
\begin{equation}
\label{eq:rq}
    s^*_{\cal Q} = \inf\left\{s > 0:\;\sup_{\omega \in \mathcal{W}_{b}, \|\omega - \omega^*\|_{L_2} \le s} \left|P_n {\cal Q}_{\omega-\omega^*} - \E {\cal Q}_{\omega-\omega^*}\right| \leq \frac{s^2}{10}\right\}
\end{equation}
 and the (random) fixed point corresponding to the multiplier term:
\begin{equation}
\label{eq:rm}
    s^*_{\cal M} = \inf\left\{s > 0:\;\sup_{\omega \in \mathcal{W}_{b}, \|\omega - \omega^*\|_{L_2} \le s} \left|P_n {\cal M}_{\omega-\omega^*} - \E {\cal M}_{\omega-\omega^*}\right| \leq \frac{s^2}{10}   \right\}.
\end{equation}

We conclude by proving the excess risk bound for the constrained least squares estimator $\widehat{\omega}$ in terms of the fixed points $s^*_{\cal M}$ and $s^*_{\cal Q}$ defined above.
Observe that if $\|\omega-\omega^*\|^2_{L_2} \ge \max\{s^*_{\cal M}, s^*_{\cal Q}\}^2$, then by the argument above, $P_n {\cal L}_\omega > 0$. Therefore, $\widehat{\omega}$ satisfies $\|\widehat{\omega}-\omega^*\|^2_{L_2} \le \max\{s^*_{\cal M}, s^*_{\cal Q}\}^2$. Assume for the sake of contradiction that  $\E {\cal L}_{\widehat{\omega}}  > 3\max\{s^*_{\cal M}, s^*_{\cal Q}\}^2$. Then, 
\begin{align*}
    3\max\{s^*_{\cal M}, s^*_{\cal Q}\}^2 < \E {\cal L}_{\widehat{\omega}}
    = \E {\cal Q}_{\what-\omega^*} + \E {\cal M}_{\what-\omega^*}
    \le \max\{s^*_{\cal M}, s^*_{\cal Q}\}^2 + \E {\cal M}_{\what-\omega^*}.
\end{align*}
The above inequality implies that $\E {\cal M}_{\what-\omega^*} > 2\max\{s^*_{\cal M}, s^*_{\cal Q}\}^2$. Therefore, using the fact that
$P_n {\cal L}_{\what} \ge P_n {\cal M}_{\what-\omega^*} = \E {\cal M}_{\what-\omega^*} +  P_n {\cal M}_{\what-\omega^*} - \E {\cal M}_{\what-\omega^*}$ we have
\begin{align}
    P_n {\cal L}_{\widehat{\omega}} &\ge  P_n {\cal M}_{\omega-\omega^*} - |P_n {\cal M}_{\widehat{\omega}-\omega^*} - \E {\cal M}_{\widehat{\omega}-\omega^*}|
    \\
    &\ge 2(\max\{s^*_{\cal M}, s^*_{\cal Q}\})^2 - |P_n {\cal M}_{\widehat{\omega}-\omega^*} - \E {\cal M}_{\widehat{\omega}-\omega^*}|.
    \label{eq:excess-risk-bound-localization-penultimate-step}
\end{align}
We now aim to upper bound $|P_n {\cal M}_{\widehat{\omega}-\omega^*} - \E {\cal M}_{\widehat{\omega}-\omega^*}|$ in order to conclude that $P_{n} {\cal L}_{\what} > 0$, which will yield the desired contradiction.
Recall that $\norm{\what - \omega^{*}}_{L_{2}}^{2} \leq \max\{s^*_{\cal M}, s^*_{\cal Q}\}^2$.
By the star-shapedness argument (cf.\ Equation~\eqref{eq:star-shapedness}), for all $\omega \in \mathcal{W}_{b}$ with
$s^{*}_{\mathcal{M}} < \norm{\omega - \omega^*}_{L_{2}} \leq \max\{s^*_{\cal M}, s^*_{\cal Q}\}$, 
there exists some $\omega' \in \mathcal{W}_{b}$ such that $\omega - \omega^{*} = \frac{\max\{s^*_{\cal M}, s^*_{\cal Q}\}}{s^*_{\cal M}}(\omega' - \omega^{*})$ and $\norm{\omega' - \omega^*}_{L_{2}} \leq s^*_{\cal M}$.
To simplify the notation, for any $r > 0$, denote
$
 \mathcal{B}(r) = \{ \omega \in \mathcal{W}_{b} : \|\omega - \omega^*\|_{L_2} \leq  r \}.
$
Then, we have
\begin{align}
    |P_n {\cal M}_{\widehat{\omega}-\omega^*} - \E {\cal M}_{\widehat{\omega}-\omega^*}| 
    &\leq \sup_{\omega \in \mathcal{B}(\max\{s^*_{\cal M}, s^*_{\cal Q}\})} \left|P_n {\cal M}_{\omega-\omega^*} - \E {\cal M}_{\omega-\omega^*}\right| \\
    &\leq
     \frac{\max\{s^*_{\cal M}, s^*_{\cal Q}\}}{s^*_{\cal M}} \cdot \left(
    \sup_{\omega \in \mathcal{B}(s^*_{\cal M})} \left|P_n {\cal M}_{\omega-\omega^*} - \E {\cal M}_{\omega-\omega^*}\right| \right) 
    \\
    &\leq
    \frac{(\max\{s^*_{\cal M}, s^*_{\cal Q}\})^2}{10}.
\end{align}
Combining the above inequality with \eqref{eq:excess-risk-bound-localization-penultimate-step} yields
$P_{n} {\cal L}_{\what} > 0$, which contradicts the assumption that $\what$ is an empirical risk minimizer
over $\mathcal{W}_{b}$. Therefore, we have
\begin{equation}
\label{eq:excessrisk}
    \E R(\widehat{\omega}) - R(\omega^*)
    \le 3\E (\max\{s^*_{\cal M}, s^*_{\cal Q}\})^2
    \le 3\E (s^*_{\cal M})^2 + 3\E (s^*_{\cal Q})^2
\end{equation}
and we turn to the upper bounds on $\E (s^*_{\cal Q})^2$ and $\E (s^*_{\cal M})^2$ in the sequel.

\myparagraph{Quadratic term}
In this part of the analysis we obtain an upper bound on $\E(s^*_{\cal Q})^2$. Denote the second moment matrix by $\Sigma = \E XX^{\mathsf{T}}$ and assume without loss of generality that $\lambda_{1}^2 \ge \lambda_{2}^2 \ge \ldots \ge \lambda_{d}^2 > 0$. Indeed, if some of the eigenvalues are equal to zero then the distribution of $X$ is supported on a subspace of $\mathbb{R}^d$. Then we may restrict our analysis to this subspace only.

We may write $X = \Sigma^{\frac{1}{2}}Z$, where $Z$ is an \emph{isotropic} vector ($\E ZZ^{\mathsf{T}} = I_{d}$) and the eigenvalues of $\Sigma^{\frac{1}{2}}$ satisfy $\lambda_{1} \ge  \lambda_{2} \ge \ldots \ge \lambda_{d} > 0$. Observe that
\[
    \|\omega - \omega^*\|^2_{L_2} \le s^2 \quad \text{is equivalent to}\quad (\omega - \omega^*)^{\mathsf{T}}\Sigma(\omega - \omega^*) \le s^2.
\]
Denoting $v = \Sigma^{\frac{1}{2}}(\omega - \omega^*)$ and  $\mathcal{V} = \{\Sigma^{\frac{1}{2}}(\omega - \omega^*): \omega \in \mathcal{W}_{b}\}$ we may write
\begin{align}
    &\sup_{\omega \in \mathcal{W}_{b}, \|\omega - \omega^*\|_{L_2} \le s} \left|\frac{1}{n}\sum_{i=1}^n\inr{X_i, \omega - \omega^*}^2 - \E \inr{X,\omega - \omega^*}^2 \right| \nonumber
    \\
    &\quad\quad= \sup_{v \in \mathcal{V}, \|v\| \le s}  \left|\frac{1}{n}\sum_{i=1}^n \inr{Z_i, v}^2 - \E \inr{Z, v}^2 \right|. \label{eq:quadraticeq}
\end{align}
In what follows, our idea is to replace the supremum over the set $\{v: v \in \mathcal{V}, \|v\| \le s\}$ in \eqref{eq:quadraticeq} by the supremum over the unit ball by considering a special vector $W$ defined below which replaces $X$ (and $Z$). This will put us in position to apply the concentration result of Oliveira \citep{oliveira2010sums}.

Since the matrix $\Sigma$ is real and symmetric, we may write $\Sigma = U^T\text{Diag}(\lambda^2_1, \ldots, \lambda^2_d)U$, where $U$ is an orthogonal $d \times d$ matrix, and therefore, $\Sigma^{\frac{1}{2}} = \text{Diag}(\lambda_1, \ldots, \lambda_d)U$.
Denote by $B(s)$ the closed Euclidean ball in $\mathbb{R}^d$ of radius $s$ centred at zero. Since $\|\omega - \omega^*\| \le 2b$ we have
\[
    \{v: v \in \mathcal{V}, \|v\| \le s\} \subseteq B(s) \cap \Sigma^{\frac{1}{2}}B(2b) =  B(s) \cap \text{Diag}(\lambda_{1}, \ldots, \lambda_d)B(2b),
\]
where the last inequality holds since the orthogonal matrix does not change the Euclidean ball $B(2b)$. Let $e_1, \ldots, e_d$ denote the standard basis in $\mathbb{R}^d$. It is easy to verify that any point $(x_1, \ldots, x_d)$ that belongs to the intersection of the ball and the ellipsoid $B(s) \cap \text{Diag}(\lambda_{1}, \ldots, \lambda_d)B(2b)$ satisfies for any $1 \le k \le d$,
\begin{equation}
    \label{eq:ellipsoid}
    \sum_{i=1}^k x_ie_i + \sum_{i=k+1}^d \left(\frac{s}{2b \lambda_i} \right) x_i e_i \in B(2s).
\end{equation}
Indeed, we have $\sum\limits_{i = 1}^dx_i^2 \le s^2$ and $\sum\limits_{i = 1}^d\frac{s^2x_i^2}{4b^2\lambda^2_i} \le s^2$ which leads to $\sum\limits_{i = 1}^kx_i^2 \le s^2$ and $\sum\limits_{i = k + 1}^d\frac{s^2x_i^2}{4b^2\lambda^2_i} \le s^2$ implying \eqref{eq:ellipsoid}. In what follows, let $k$ be the largest integer that satisfies $s \leq 2b \lambda_k$. Finally, the set consisting of all $(x_1, \ldots, x_d)$ satisfying \eqref{eq:ellipsoid} contains $\{v: v \in \mathcal{V}, \|v\| \le s\}$ as a subset.
Denote $Z = (z_1, \ldots, z_d)$. For the same value of $k$ define the random vector $W = (w_1, \ldots, w_d)$,
\[
    W=\left(z_1, \ldots, z_k, \frac{2b \lambda_{k + 1}}{s}z_{k + 1}, \ldots, \frac{2b \lambda_n}{s}z_n\right).
\]
We may rewrite 
\[
    \inr{Z, v}=\sum_{i = 1}^k v_i z_i + \sum_{i=k+1}^d \left(\frac{s}{2b \lambda_i} \right) v_i\left(\frac{2b \lambda_i}{s}\right)z_i = \sum_{i=1}^k v_i w_i + \sum_{i=k+1}^d \left(\frac{s}{2b \lambda_i} \right) v_i w_i.
\]
These computations imply that
\begin{equation}
    \label{eq:almostmatrixinequality}
    \sup_{v \in \mathcal{V}, \|v\| \le s}  \left|\frac{1}{n}\sum_{i=1}^n \inr{Z_i, v}^2 - \E \inr{Z, v}^2 \right| \le \sup_{v \in B(2s)} \left|\frac{1}{n}\sum_{i=1}^n \inr{W_i,v}^2 - \E \inr{W,v}^2 \right|
\end{equation}
Finally, we provide two properties of the defined random vector $W$:
\begin{itemize}
    \item Note that $\sum\limits_{i = 1}^d \lambda_i^2z_i^2 = \|\Sigma^{1/2}Z\|^{2} = \|X\|^{2} \leq 1$ almost surely. We have $\|W\|^2 = \sum_{i \leq k} \frac{1}{\lambda_i^2} \cdot (\lambda_i^2 z_i^2) + \frac{4b^2}{s^2} \sum_{i=k+1}^d \lambda_i^2 z_i^2 \leq \max\left\{\frac{1}{\lambda_k^2}, \frac{4b^2}{s^2}\right\},$
    and recalling that $s \leq 2\lambda_k b$, it implies that almost surely
    \begin{equation}
        \label{eq:wbound}
        \|W\| \leq \frac{2b}{s}.
    \end{equation}
    
    \item For every $v \in \R^d$, since $2\lambda_i b \leq s$ for $i \geq k+1$,
    \begin{equation} \label{eq:prop-of-W-1}
        \E \inr{W,v}^2 = \sum_{i \leq k} v_i^2 + \sum_{i=k+1}^d v_i^2 \left(\frac{2b \lambda_i}{s}\right)^2 \leq \|v\|_2^2.
    \end{equation}
\end{itemize}
Combining these two properties with \eqref{eq:almostmatrixinequality} we apply a version of Rudelson's inequality for rank one operators \citep[Lemma 1]{oliveira2010sums} which implies that with probability at least $1 - \delta$
\begin{align*}
    \sup_{v \in B(2s)} \left|\frac{1}{n}\sum_{i=1}^n \inr{W_i,v}^2 - \E \inr{W,v}^2 \right| &= 4s^2\sup_{v \in B(1)} \left|\frac{1}{n}\sum_{i=1}^n \inr{W_i,v}^2 - \E \inr{W,v}^2 \right|
    \\
    &\le 32bs\sqrt{\frac{2\log(\min\{n, d\}) + 2\log 2 + \log \frac{1}{\delta}}{n}},
\end{align*}
provided that $\frac{8b}{s}\sqrt{\frac{2\log(\min\{n, d\} + 2\log 2 + \log \frac{1}{\delta})}{n}} \le 2$. Recalling the definition \eqref{eq:rq} of $s^*_{\cal Q}$ and solving the fixed point inequality $32bs\sqrt{\frac{2\log(\min\{n, d\} + 2\log 2 + \log \frac{1}{\delta})}{n}} \le \frac{s^2}{10}$, we may choose a large enough numerical constant $c_1$ such that, with probability at least $1 - \delta$,
\begin{equation}
    \label{eq:quadratichp}
    (s^*_{\cal Q})^2 \le  c_1\frac{b^2(\log(\min\{n, d\}) + \log \frac{1}{\delta})}{n}.
\end{equation}
Integrating the last inequality we have for $u^{\prime} = c_2\frac{b^2\log(\min\{n, d\})}{n}$, where $c_2$ is some numerical constant
\begin{align*}
    \E(s^*_{\cal Q})^2 &= \int\limits_{0}^{\infty}\Pr\left((s^*_{\cal Q})^2 > u\right)du \le u^{\prime} + \int\limits_{u^{\prime}}^{\infty}\exp\left(-\frac{nu}{b^2c_1} + \log(\min\{n, d\})\right)du 
    \\
    &\lesssim \frac{b^2\log(\min\{n, d\})}{n}.
\end{align*}
And for $r > 0$ we have due to homogeneity 
\begin{equation}
    \label{eq:quadraticexp}
    \E(s^*_{\cal Q})^2 \lesssim \frac{r^2b^2\log(\min\{n, d\})}{n}.
\end{equation}
\myparagraph{Multiplier term}
In this part of the proof we work with general $r > 0$ and we aim to upper bound $\E (s_{\mathcal{M}}^{*})^2$. Recall that $\xi = Y - \inr{\omega^*,X}$. Fix any $\lambda > 0$ and consider the event $E$ that $s^*_{\cal M}/2 > s^*_{\cal Q}$. 
Denote $\mathcal{W}^{\prime} = \{\omega: \omega \in \mathcal{W}_{b},\; \|\omega - \omega^*\|_{L_2} \le s^*_{\cal M}/2\}$
Plugging $s = s^*_{\cal M}/2$ into \eqref{eq:rm} we have on $E$,
\begin{align*}
    (s^*_{\cal M})^2 &\le \sup_{\omega \in \mathcal{W}^{\prime}} 80\left|\frac{1}{n}\sum_{i=1}^n \xi_i\inr{X_i, \omega^* - \omega} - \E\xi\inr{X, \omega^* - \omega}\right|
    \\
    &\le \sup_{\omega \in\mathcal{W}^{\prime}}\Biggl(80\left|\frac{1}{n}\sum_{i=1}^n \xi_i\inr{X_i, \omega^* - \omega} - \E\xi\inr{X, \omega^* - \omega}\right|
    \\
    &\quad\quad\quad+\|\omega - \omega^*\|_{L_2}^2 - \frac{1}{n}\sum_{i=1}^n\inr{X_i, \omega - \omega^*}^2 - \frac{2\lambda r^2\|\omega - \omega^*\|^2}{n}\Biggr) 
    \\
    &\quad\quad\quad\quad+\frac{(s^*_{\cal M})^2}{40} + \frac{8\lambda r^2b^2}{n}
    \\
    &\le \sup_{\omega \in \mathcal{W}^{\prime}}\Biggl(80\left|\frac{1}{n}\sum_{i=1}^n \xi_i\inr{X_i, \omega^* - \omega} - \E\xi\inr{X, \omega^* - \omega}\right|
    \\
    &\quad\quad\quad-\|\omega - \omega^*\|_{L_2}^2 - \frac{1}{n}\sum_{i=1}^n\inr{X_i, \omega - \omega^*}^2 - \frac{2\lambda r^2\|\omega - \omega^*\|^2}{n}\Biggr) 
    \\
    &\quad\quad\quad\quad+ (s^*_{\cal M})^2\left(\frac{1}{2} + \frac{1}{40}\right) + \frac{8\lambda r^2b^2}{n}.
\end{align*}
In the first step above we used the definition \eqref{eq:rq} of $s^*_{\cal Q}$ together with the star-shapedness argument (cf.\ the localization part of the proof above), the inequality $s^*_{\cal M}/2 > s^*_{\cal Q}$, and the fact that $\|\omega - \omega^*\|^2 \le 4b^2$ for $\omega \in \mathcal{W}_{b}$. In the second inequality we have used the definition of $\mathcal{W}^{\prime}$.

Let $\varepsilon_1, \ldots, \varepsilon_n$ be independent random signs and let $\E^{\prime}$ denote the expectation with respect to an independent copy of the sample $S_{n}$. We now show how to control the multiplier term on the event $E$, that is, the term $\E (s^*_{\cal M})^2\ind_{E}$. By Jensen's inequality, the symmetrization argument and the symmetry of $\mathbb{R}^d$ used to remove the absolute value, we have the following: 
\begin{align*}
    &\E\sup_{\omega \in \mathcal{W}^{\prime}}\Biggl(\left|\frac{80}{n}\sum_{i=1}^n \xi_i\inr{X_i, \omega^* - \omega} - \E^{\prime}\xi\inr{X, \omega^* - \omega}\right| 
    \\
    &\quad\quad\quad-\frac{1}{n}\sum_{i=1}^n\inr{X_i, \omega - \omega^*}^2 - \E^{\prime}\xi\inr{X, \omega^* - \omega} -  \frac{2\lambda r^2\|\omega - \omega^*\|^2}{n}\Biggr)
    \\
    &\le \E\sup_{v \in \mathbb{R}^d}\left(\left|\frac{80}{n}\sum_{i=1}^n \xi_i\inr{X_i, v} - \E^{\prime}\xi\inr{X, v}\right| - \frac{1}{n}\sum_{i=1}^n\inr{X_i, v}^2 - \E^{\prime}\xi\inr{X, v} -  \frac{2\lambda r^2\|v\|^2}{n}\right)
    \\
    &\le\E\E^{\prime}\sup_{v \in \mathbb{R}^d}\Biggl(\frac{80}{n}\left|\sum_{i=1}^n\varepsilon_i(\xi_i\inr{X_i, v} - \xi_i^{\prime}\inr{X_i^{\prime}, v})\right| 
    \\
    &\quad\quad\quad- \frac{1}{n}\sum_{i=1}^n\inr{X_i, v}^2 -\frac{1}{n}\sum_{i=1}^n\inr{X_i^{\prime}, v}^2 -  \frac{2\lambda r^2\|v\|^2}{n}\Biggr)
    \\
    &\le2\E\sup_{v \in \mathbb{R}^d}\left(\frac{80}{n}\sum_{i=1}^n\varepsilon_i\xi_i\inr{X_i, v}  - \frac{1}{n}\sum_{i=1}^n\inr{X_i, v}^2 -  \frac{\lambda r^2\|v\|^2}{n}\right)
    \\
    &=\frac{3200}{n}\E\left(\sum\limits_{i, j}^n\varepsilon_i\varepsilon_j\xi_i\xi_jX_i^{\mathsf{T}}\left(\lambda r^2 I_d + \sum\limits_{k = 1}^nX_kX_k^{\mathsf{T}}\right)^{-1}X_j\right)
    \\
    &= \frac{3200}{n}\E\sum\limits_{i = 1}^n\xi_i^2X_i^{\mathsf{T}}\left(\lambda r^2 I_d + \sum\limits_{k = 1}^nX_kX_k^{\mathsf{T}}\right)^{-1}X_i
    \\
    &= \frac{3200}{n}\E\xi^2X^{\mathsf{T}}\widehat{\Sigma}_{\lambda r^2}^{-1}X,
\end{align*}
where in the last lines we used the exact value of $v$ maximizing the expression as well as the exchangeability of $X_i$ and $X_j$.
Finally, we have
\begin{align*}
    \E(s^*_{\cal M})^2 &= \E (s^*_{\cal M})^2\ind_{\{s^*_{\cal M} \le 2s^*_{\cal Q}\}} + \E (s^*_{\cal M})^2\ind_{\{s^*_{\cal M} > 2s^*_{\cal Q}\}}
    \\
    &\le 4\E (s^*_{\cal Q})^2 + \frac{3200}{n}\E\xi^2X^{\mathsf{T}}\widehat{\Sigma}_{\lambda r^2}^{-1}X + \E (s^*_{\cal M})^2\left(\frac{1}{2} + \frac{1}{40}\right) + \frac{8\lambda r^2 b^2}{n}.
\end{align*}
Combining the last inequality with \eqref{eq:excessrisk} and \eqref{eq:quadraticexp} we have
\begin{equation}
    \label{abc}
    \E R(\what_{b}^{\operatorname{ERM}}) - R(\omega^*) \lesssim \frac{\E\xi^2X^{\mathsf{T}}\widehat{\Sigma}_{\lambda r^2}^{-1}X}{n}+ \frac{(\lambda + \log(\min\{n, d\}))r^2b^2}{n}.
\end{equation}
This proves Theorem \ref{thm:main_thm}.
\qed

\subsection{Proof of Theorem \ref{thm:erm-ridge-lower-bound}}
\label{sec:erm-lower-bound-proof}
The proof of this result is split into several steps. In this section, we provide three technical lemmas and demonstrate how they imply the result of Theorem~\ref{thm:erm-ridge-lower-bound}. The first two lemmas are based on exact computations using the Sherman-Morrison formula. The proof of the third lemma, for which we sketch a simple heuristic argument before presenting the formal proof, is based on matrix concentration inequalities. We always assume that $d$ (and therefore $n$, since it satisfies $n \gtrsim d^{3}\log d$) is large enough. Within the proofs, we use auxiliary variables $\alpha, \beta, x, y$, that are sometimes redefined throughout the text.

Before we proceed, let us remark that $(X, Y)$ distributed according to \eqref{eq:bad-distribution} satisfies $\|X\| \le 1$ almost surely and $\|Y\|_{L_{\infty}} \le 1$, thus $r=m=1$. Our first lemma, proved in Appendix~\ref{sec:proof-of-first-lemma}, provides an excess risk lower bound for any vector $\omega \in \R^{d}$, provided that $b$ is large enough.
\begin{Lemma}
  \label{lem:first_lemma}
  Suppose that $b \geq \sqrt{d}/2, d \geq 4$, and $(X, Y)$ is distributed according to \eqref{eq:bad-distribution}. Then, for any
  $\omega \in \R^{d}$ we have
  \[
    R(\omega) - R(\omega^{*}_b)
    \geq
    \frac{1}{2}
      d^{-3/2}
      \norm{\omega - \omega^{*}_b}^{2}
    \quad\text{and also}\quad
    \omega^{*}_b
     =\frac{\sqrt{d}-1}{2\sqrt{d} - 1}\cdot\mathbf{1}.
  \]
\end{Lemma}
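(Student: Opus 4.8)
The plan is to compute the risk $R(\omega) = \E(Y - \langle\omega,X\rangle)^2$ in closed form using the explicit structure of the distribution \eqref{eq:bad-distribution}, then identify its minimizer and lower-bound its curvature. I would begin by writing $R(\omega)$ as a weighted sum over the two cases: with probability $1 - d^{-1/2}$ we have $X = d^{-1}\mathbf{1}$, $Y = 1$, contributing $(1 - d^{-1}\langle\omega,\mathbf{1}\rangle)^2$; with probability $d^{-1/2}$ we have $X = d^{-1/4}\mathbf{1}_S$, $Y = 0$, contributing $\E_S\, d^{-1/2}\langle\omega,\mathbf{1}_S\rangle^2$. For the second term, I would use that $S$ is uniform over $\sqrt d$-subsets, so $\E_S (\mathbf{1}_S)(\mathbf{1}_S)^{\mathsf T} = \frac{\sqrt d(\sqrt d - 1)}{d(d-1)}(\mathbf{1}\mathbf{1}^{\mathsf T} - I_d) + \frac{1}{\sqrt d}I_d$; this makes $\E_S\langle\omega,\mathbf{1}_S\rangle^2$ a quadratic form in $\omega$ with a clean diagonal-plus-rank-one structure in terms of $\|\omega\|^2$ and $\langle\omega,\mathbf{1}\rangle^2$.

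\textbf{Minimizer and reduction to a scalar.} Since $R(\omega)$ depends on $\omega$ only through $t := \langle\omega,\mathbf{1}\rangle$ and $\|\omega\|^2$, and the $\|\omega\|^2$ coefficient is nonnegative, the unconstrained minimizer must have $\omega$ proportional to $\mathbf{1}$ (for fixed $t$, minimizing $\|\omega\|^2$ forces $\omega = (t/d)\mathbf{1}$). Substituting $\omega = s\mathbf{1}$ reduces everything to a one-dimensional quadratic in $s$, which I would minimize directly; I expect the answer to be $s = \frac{\sqrt d - 1}{2\sqrt d - 1}$ up to the routine algebra, giving $\omega^*_\infty = \frac{\sqrt d - 1}{2\sqrt d - 1}\mathbf{1}$, and since $\|\omega^*_\infty\| = \frac{\sqrt d - 1}{2\sqrt d - 1}\sqrt d \le \sqrt d/2 \le b$, we get $\omega^*_b = \omega^*_\infty$ as claimed.

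\textbf{Curvature lower bound.} For the quadratic lower bound $R(\omega) - R(\omega^*_b) \ge \frac12 d^{-3/2}\|\omega - \omega^*_b\|^2$, I would use the general fact that for a quadratic $R$ minimized at $\omega^*_b$ over $\R^d$, $R(\omega) - R(\omega^*_b) = (\omega - \omega^*_b)^{\mathsf T} A (\omega - \omega^*_b)$ where $A = \Sigma = \E XX^{\mathsf T}$ is the (Hessian) second-moment matrix. So it suffices to show $\Sigma \succeq d^{-3/2}I_d$ (up to the factor $\tfrac12$). Computing $\Sigma$: the first case contributes $(1 - d^{-1/2})d^{-2}\mathbf{1}\mathbf{1}^{\mathsf T}$, and the second contributes $d^{-1/2}\cdot d^{-1/2}\,\E_S(\mathbf{1}_S)(\mathbf{1}_S)^{\mathsf T} = d^{-1}\big(\tfrac{\sqrt d(\sqrt d-1)}{d(d-1)}(\mathbf{1}\mathbf{1}^{\mathsf T} - I_d) + \tfrac{1}{\sqrt d}I_d\big)$. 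The rank-one parts are PSD, so $\Sigma \succeq d^{-1}\big(\tfrac{1}{\sqrt d} - \tfrac{\sqrt d(\sqrt d - 1)}{d(d-1)}\big)I_d$; I would check that for $d \ge 4$ the scalar coefficient is at least $\tfrac12 d^{-3/2}$, which is elementary.

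\textbf{Main obstacle.} None of the steps is deep — the work is entirely the exact computation of $\E_S(\mathbf{1}_S)(\mathbf{1}_S)^{\mathsf T}$ and the bookkeeping in minimizing the scalar quadratic and verifying the constant in the curvature bound for $d \ge 4$. The one place to be careful is ensuring the $\tfrac12$ constant and the threshold $d \ge 4$ are compatible: the negative rank-one contribution $-\tfrac{\sqrt d(\sqrt d - 1)}{d(d-1)}I_d$ inside $\Sigma$'s diagonal must be dominated, so I would track the $d$-dependence precisely rather than discarding lower-order terms, and confirm that $b \ge \sqrt d/2$ is exactly what is needed for $\omega^*_b = \omega^*_\infty$.
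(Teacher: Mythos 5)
Your proposal is correct and follows essentially the same route as the paper: compute $\Sigma = \E XX^{\mathsf{T}} = \alpha\,\mathbf{1}\mathbf{1}^{\mathsf{T}} + \beta I_{d}$, identify $\omega^{*}_{\infty} = \frac{\sqrt{d}-1}{2\sqrt{d}-1}\mathbf{1}$ and check $\norm{\omega^{*}_{\infty}} \leq \sqrt{d}/2 \leq b$ so that $\omega^{*}_{b} = \omega^{*}_{\infty}$, then drop the positive semi-definite rank-one part to get $R(\omega) - R(\omega^{*}_{b}) = (\omega-\omega^{*}_{b})^{\mathsf{T}}\Sigma(\omega-\omega^{*}_{b}) \geq \beta\norm{\omega-\omega^{*}_{b}}^{2}$ with $\beta = d^{-3/2}\sqrt{d}/(\sqrt{d}+1) \geq \tfrac{1}{2}d^{-3/2}$ for $d \geq 4$. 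The only difference is cosmetic: the paper computes $\omega^{*}_{\infty} = \Sigma^{-1}\E XY$ via the Sherman--Morrison formula, whereas you exploit the permutation symmetry of the distribution to reduce the minimization to a scalar quadratic in $s$ along $\omega = s\mathbf{1}$; both yield the same minimizer, and your scalar reduction is, if anything, slightly more elementary.
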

Further, we define an unconstrained least squares solution as (dropping the superscript $\operatorname{ERM}$ in our notation):
\begin{equation}
  \label{eq:unconstrained-erm-solution}
  \what_{\infty} = \left(n\widehat{\Sigma}\right)^{-1}
              \Big(\sum_{i=1}^{n} X_{i}Y_{i}\Big).
\end{equation}

In the proof of Theorem~\ref{thm:erm-ridge-lower-bound} we work 
on the event where $\widehat{\Sigma}$ is invertible which will be shown to hold with sufficient probability. This ensures the uniqueness of $\what_{\infty}$ hence, we remark that the result of Theorem~\ref{thm:erm-ridge-lower-bound} holds for \emph{any} constrained least squares estimator. 
Our proof strategy is quite straightforward: using Lemma~\ref{lem:first_lemma} we show that the excess risk of
$\what_{\infty}$ is lower bounded by $cd^{3/2}/n$,
while for large enough $b$, $\what_{\infty}$ is also a least squares solution constrained to the ball of radius $b$. Before stating our next lemma we introduce some additional notation.
Let
\[
    I = \{i \in \{1, \ldots, n\} : X_{i} \neq \mathbf{1}/d\}
\]
denote the (random) subset of
data points whose covariates are not equal to $\mathbf{1}/d$. Denote
\begin{equation}
  \label{eq:bad-covariance-definition}
  A = \sum_{i \in I} X_{i}X_{i}^{\mathsf{T}}
  \quad\text{and hence}\quad
  \widehat{\Sigma} = \frac{1}{n}\left(
    \left(n - \abs{I}\right)d^{-2}\mathbf{1}\mathbf{1}^{\mathsf{T}}
    + A
  \right).
\end{equation}
Further, let $v, \zeta \in \R^{d}$ denote the (random) vectors such that
\begin{equation}
  \label{eq:c-and-zeta-definitions}
  v_{i} = A_{ii}\sqrt{d}
  \quad\text{and}\quad
  \zeta = v - \abs{I}d^{-1/2}\mathbf{1}.
\end{equation}
In words, the $i$-th entry of $v$ denotes the number of observations
in the set $I$ whose $i$-th entry is non-zero. Conditionally on the size of 
the set $I$, $\E v = \abs{I}d^{-1/2}\mathbf{1}$ and hence, $\zeta$ represents
the noise present in the counts vector~$v$.
We will repeatedly rely on the following identities, which can be shown
via a simple counting argument:
\begin{equation}
  \label{eq:A1-equals-v}
  A\mathbf{1} = v = \abs{I}d^{-1/2}\mathbf{1} + \zeta \quad \text{and} \quad \inr{\zeta, \mathbf{1}} = 0.
\end{equation}
The following lemma provides a sharp inequality for the norm
of $\what_{\infty}$ as well as an exact expression for the vector $\what_{\infty}$ itself. The proof is deferred to Appendix~\ref{sec:proof-of-erm-expressions-lemma}.
\begin{Lemma}
  \label{lemma:erm-expressions}
  Let $\what_{\infty}$ be defined by \eqref{eq:unconstrained-erm-solution}. The following two identities hold whenever the matrix $A$ defined in
  \eqref{eq:bad-covariance-definition} is invertible:
    \begin{equation}
    \label{eq:formulsforwinf}
    \what_{\infty}
    =
      \frac{
        d^{3/2}\abs{I}^{-1}
      }{
        (n-\abs{I})^{-1}d^{2} + \mathbf{1}^{\mathsf{T}}A^{-1}\mathbf{1}
      }\mathbf{1}
      -
      \frac{d^{3/2}\abs{I}^{-1}}{
        (n-\abs{I})^{-1}d^{2} + \mathbf{1}^{\mathsf{T}}A^{-1}\mathbf{1}
      }A^{-1}\zeta,
  \end{equation}
  and
    \[
    \norm{\what_{\infty}}^{2}
    \leq
    n^{2}d^{-2}\mathbf{1}^{\mathsf{T}}A^{-2}\mathbf{1}.
    \]
\end{Lemma}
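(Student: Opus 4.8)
The plan is to invert $n\widehat{\Sigma}$ explicitly via the Sherman--Morrison formula and then rewrite the result using the counting identity \eqref{eq:A1-equals-v}. First I would record two elementary facts about the sums defining $\what_{\infty}$. Among the $n$ data points, the ``good'' ones $(X_i,Y_i)=(d^{-1}\mathbf{1},1)$ each contribute $X_iY_i=d^{-1}\mathbf{1}$ while the ``bad'' ones contribute $0$ (since $Y_i=0$ there), so $\sum_{i=1}^n X_iY_i=(n-\abs{I})d^{-1}\mathbf{1}$; similarly each good point contributes $X_iX_i^{\mathsf{T}}=d^{-2}\mathbf{1}\mathbf{1}^{\mathsf{T}}$, giving $n\widehat{\Sigma}=(n-\abs{I})d^{-2}\mathbf{1}\mathbf{1}^{\mathsf{T}}+A$, exactly as in \eqref{eq:bad-covariance-definition}. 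Hence $\what_{\infty}=\big((n-\abs{I})d^{-2}\mathbf{1}\mathbf{1}^{\mathsf{T}}+A\big)^{-1}(n-\abs{I})d^{-1}\mathbf{1}$.

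Next I would apply Sherman--Morrison with the rank-one perturbation $\alpha\mathbf{1}\mathbf{1}^{\mathsf{T}}$, where $\alpha=(n-\abs{I})d^{-2}$. Writing $\beta=\mathbf{1}^{\mathsf{T}}A^{-1}\mathbf{1}$, which is strictly positive since $A\succeq 0$ is assumed invertible, the perturbed inverse applied to the multiple of $\mathbf{1}$ collapses onto the single direction $A^{-1}\mathbf{1}$:
\[
  \what_{\infty}
  = (n-\abs{I})d^{-1}\Big(1-\frac{\alpha\beta}{1+\alpha\beta}\Big)A^{-1}\mathbf{1}
  = \frac{(n-\abs{I})d^{-1}}{1+(n-\abs{I})d^{-2}\beta}\,A^{-1}\mathbf{1}.
\]
To match the form claimed in \eqref{eq:formulsforwinf}, I would use $A\mathbf{1}=\abs{I}d^{-1/2}\mathbf{1}+\zeta$ from \eqref{eq:A1-equals-v}, which upon multiplying by $A^{-1}$ and rearranging gives $A^{-1}\mathbf{1}=d^{1/2}\abs{I}^{-1}(\mathbf{1}-A^{-1}\zeta)$. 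Substituting this and multiplying numerator and denominator of the scalar prefactor by $d^{2}/(n-\abs{I})$ turns it into $d^{3/2}\abs{I}^{-1}/\big((n-\abs{I})^{-1}d^{2}+\mathbf{1}^{\mathsf{T}}A^{-1}\mathbf{1}\big)$, producing precisely the two-term expression in the statement.

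For the norm bound I would work from the intermediate form $\what_{\infty}=c\,A^{-1}\mathbf{1}$ with $c=(n-\abs{I})d^{-1}/\big(1+(n-\abs{I})d^{-2}\beta\big)$, so that, by symmetry of $A^{-1}$, $\norm{\what_{\infty}}^{2}=c^{2}\,\mathbf{1}^{\mathsf{T}}A^{-2}\mathbf{1}$. Since $\beta>0$ the denominator is at least $1$, hence $c\le(n-\abs{I})d^{-1}\le nd^{-1}$, which gives $\norm{\what_{\infty}}^{2}\le n^{2}d^{-2}\,\mathbf{1}^{\mathsf{T}}A^{-2}\mathbf{1}$. The whole argument is an exact algebraic identity valid on the event that $A$ is invertible, so there is no analytic obstacle; the only place demanding care is the Sherman--Morrison bookkeeping and the correct use of \eqref{eq:A1-equals-v} to convert $A^{-1}\mathbf{1}$ into the $\mathbf{1}-A^{-1}\zeta$ form.
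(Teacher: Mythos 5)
Your proposal is correct and follows essentially the same route as the paper: apply Sherman--Morrison to the rank-one perturbation $(n-\abs{I})d^{-2}\mathbf{1}\mathbf{1}^{\mathsf{T}}+A$, then use the counting identity $A\mathbf{1}=\abs{I}d^{-1/2}\mathbf{1}+\zeta$ to convert $A^{-1}\mathbf{1}$ into $d^{1/2}\abs{I}^{-1}(\mathbf{1}-A^{-1}\zeta)$. Your norm bound is a mild streamlining — you bound the scalar prefactor of $A^{-1}\mathbf{1}$ by $nd^{-1}$ directly instead of expanding $\mathbf{1}^{\mathsf{T}}(n\widehat{\Sigma})^{-2}\mathbf{1}$ via Sherman--Morrison as the paper does — but this is the same computation and yields the same inequality.
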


Note that the first summand in \eqref{eq:formulsforwinf} as well as the vector $\omega^*_b$ are both proportional to $\mathbf{1}$. However, it will be shown later that the second summand in \eqref{eq:formulsforwinf}, which is proportional to $A^{-1}\zeta$, is almost orthogonal to $\mathbf{1}$. Combining this observation with the fact that
$\ip{\mathbf{1}}{\zeta} = 0$ will yield the desired lower bound via Lemma \ref{lem:first_lemma}, provided that the magnitude of the second term in \eqref{eq:formulsforwinf} is large enough.

Combining Lemmas~\ref{lem:first_lemma} and \ref{lemma:erm-expressions},
the excess risk of the unconstrained least squares solution $\what_{\infty}$
can be expressed in terms of the random quadratic form
$\mathbf{1}^{\mathsf{T}}A^{-1}\mathbf{1}$ and the random vector $A^{-1}\zeta$.
Also, the norm of $\what_{\infty}$ can be upper-bounded in terms
of $\mathbf{1}^{\mathsf{T}}A^{-2}\mathbf{1}$. The following result provides sharp bounds on all the random quantities that we need.
\begin{Lemma}
  \label{lemma:bounds-on-quadratic-forms}
  Suppose that $d$ is large enough and $n \gtrsim d^{3}\log d$.
  Then, the following results hold simultaneously,
  with probability at least $1/2$:
  \begin{enumerate}[(a)]
      \item $\abs{I} \sim nd^{-1/2}$;
      \item $\|\zeta\|^2 \sim n$;
      \item The matrix $A$ defined by \eqref{eq:bad-covariance-definition} is
            invertible;
      \item $\zeta^{\mathsf{T}}A^{-1}\zeta \gtrsim d^{3/2}$;
            \item $\mathbf{1}^{\mathsf{T}}A^{-1}\mathbf{1} \lesssim n^{-1}d^{2}$;
      \item $\mathbf{1}^{\mathsf{T}}A^{-2}\mathbf{1}  \lesssim n^{-2}d^{3}$.
  \end{enumerate}
\end{Lemma}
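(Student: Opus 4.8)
Condition throughout on the random size $\abs{I}$, which has distribution $\operatorname{Binomial}(n, d^{-1/2})$. By \eqref{eq:bad-distribution} and \eqref{eq:bad-covariance-definition} we may write $A = d^{-1/2}\sum_{i \in I}\mathbf{1}_{S_i}\mathbf{1}_{S_i}^{\mathsf{T}}$, so that, conditionally on $\abs{I}$, it is a sum of $\abs{I}$ i.i.d.\ rank-one PSD matrices, each of operator norm $d^{-1/2}\|\mathbf{1}_{S_i}\|^2 = 1$, with mean $\E[A \mid \abs{I}] = d^{-1/2}\abs{I}\,M$ where $M = \E\,\mathbf{1}_{S}\mathbf{1}_{S}^{\mathsf{T}}$ for $S \sim \operatorname{Uniform}(\mathcal{S}_{\sqrt{d}})$. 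A direct computation gives $M = q\,\mathbf{1}\mathbf{1}^{\mathsf{T}} + (p-q)I_d$ with $p = d^{-1/2}$ and $q = \frac{\sqrt{d}(\sqrt{d}-1)}{d(d-1)}$, so that $M$ has eigenvalue $1$ along $\mathbf{1}$ and eigenvalue $p - q = (\sqrt{d}+1)^{-1}$ on $\mathbf{1}^{\perp}$; hence $\lambda_{\min}(\E[A\mid\abs{I}]) \sim \abs{I}/d$. The plan is first to establish the concentration claims (a), (b), (c) together with the auxiliary bounds $\lambda_{\min}(A) \gtrsim \abs{I}/d$ and $\lambda_{\max}(PAP) \lesssim \abs{I}/d$, where $P = I_d - d^{-1}\mathbf{1}\mathbf{1}^{\mathsf{T}}$ is the orthogonal projection onto $\mathbf{1}^{\perp}$, and then to deduce (d)--(f) from these bounds by purely algebraic manipulations built on the identity \eqref{eq:A1-equals-v}.

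For (a), a scalar Chernoff bound for $\abs{I} \sim \operatorname{Binomial}(n, d^{-1/2})$ gives $\abs{I} \sim nd^{-1/2}$ with probability $1 - e^{-\Omega(nd^{-1/2})}$, since $\E\abs{I} = nd^{-1/2} \gtrsim d^{5/2}\log d$ is large; we henceforth work on this event. For (c) and the bound on $\lambda_{\min}(A)$, apply the matrix Chernoff inequality \citep{tropp15} conditionally on $\abs{I}$: the summands have norm $1$ and $\lambda_{\min}(\E[A\mid\abs{I}]) \sim \abs{I}/d \gtrsim d^{3/2}\log d$, so $\lambda_{\min}(A) \gtrsim \abs{I}/d$ with probability $1 - d\,e^{-c\abs{I}/d} = 1 - o(1/d)$, which in particular makes $A$ invertible. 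Applying matrix Chernoff in the same way to $PAP = d^{-1/2}\sum_{i\in I}(P\mathbf{1}_{S_i})(P\mathbf{1}_{S_i})^{\mathsf{T}}$, whose summands have norm $d^{-1/2}\|P\mathbf{1}_{S_i}\|^2 = d^{-1/2}(\sqrt{d}-1) \le 1$ and whose conditional mean is $(p-q)d^{-1/2}\abs{I}\,P$, of top eigenvalue $\sim \abs{I}/d$, yields $\lambda_{\max}(PAP) \lesssim \abs{I}/d$ with probability $1 - o(1/d)$. For (b), a counting argument using \eqref{eq:A1-equals-v} gives $\|\zeta\|^2 = \|A\mathbf{1}\|^2 - \abs{I}^2 = \abs{I}(\sqrt{d}-1) + \sum_{i \ne i' \in I}(\abs{S_i \cap S_{i'}} - 1)$, where the sum has conditional mean $0$, so $\E[\|\zeta\|^2\mid\abs{I}] = \abs{I}(\sqrt{d}-1) \sim n$. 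The pair sum concentrates because the variables $W_{ii'} = \abs{S_i \cap S_{i'}} - 1$ ($i < i'$) are pairwise \emph{uncorrelated} --- indeed $\E[W_{12}W_{13}\mid S_1] = \E[W_{12}\mid S_1]\E[W_{13}\mid S_1] = 0$ since $\E[\abs{S_1\cap S_2} \mid S_1] = 1$ for any fixed $S_1$ --- and each has variance $\sim 1$, whence $\Var(\|\zeta\|^2 \mid \abs{I}) \lesssim \abs{I}^2 \sim n^2/d$, which is a factor $\sim d$ below the square of the mean; Chebyshev's inequality then gives $\|\zeta\|^2 \sim n$ with probability $1 - O(1/d)$.

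It remains to deduce (d)--(f). Rearranging \eqref{eq:A1-equals-v}, namely $A\mathbf{1} = \abs{I}d^{-1/2}\mathbf{1} + \zeta$, gives
\[
  A^{-1}\mathbf{1} = \frac{\sqrt{d}}{\abs{I}}\bigl(\mathbf{1} - A^{-1}\zeta\bigr),
  \qquad\text{so that}\qquad
  \mathbf{1}^{\mathsf{T}}A^{-1}\mathbf{1} = \frac{d^{3/2}}{\abs{I}} + \frac{d}{\abs{I}^{2}}\,\zeta^{\mathsf{T}}A^{-1}\zeta,
\]
where the second identity uses $\inr{\mathbf{1},\zeta} = 0$. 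For (e): bounding $\zeta^{\mathsf{T}}A^{-1}\zeta \le \|\zeta\|^2 \lambda_{\min}(A)^{-1} \lesssim n \cdot d/\abs{I} \sim d^{3/2}$ and using $\abs{I} \gtrsim nd^{-1/2}$ shows the first term is $\lesssim d^2/n$ and the second is $\lesssim d^{7/2}/n^2 \lesssim d^2/n$, the last step using $n \gtrsim d^{3/2}$. For (f): $\mathbf{1}^{\mathsf{T}}A^{-2}\mathbf{1} = \|A^{-1}\mathbf{1}\|^2 = \frac{d}{\abs{I}^{2}}\|\mathbf{1} - A^{-1}\zeta\|^2$, and since $\|A^{-1}\zeta\|^2 = \zeta^{\mathsf{T}}A^{-2}\zeta \le \|\zeta\|^2\lambda_{\min}(A)^{-2} \lesssim d^3/n \le d = \|\mathbf{1}\|^2$ (using $n \gtrsim d^2$), we get $\|\mathbf{1} - A^{-1}\zeta\|^2 \lesssim d$ and hence $\mathbf{1}^{\mathsf{T}}A^{-2}\mathbf{1} \lesssim d^2/\abs{I}^2 \sim d^3/n^2$. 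For (d), the naive bound $\zeta^{\mathsf{T}}A^{-1}\zeta \ge \|\zeta\|^2/\lambda_{\max}(A)$ is too weak: since $A$ has Rayleigh quotient $\mathbf{1}^{\mathsf{T}}A\mathbf{1}/\|\mathbf{1}\|^2 = \abs{I}/\sqrt{d}$ along $\mathbf{1}$, this gives at best a bound of order $d$. Instead I exploit $\zeta \in \mathbf{1}^{\perp}$: writing $A$ in block form with respect to $\R\mathbf{1} \oplus \mathbf{1}^{\perp}$ and taking the Schur complement of the positive $(\mathbf{1},\mathbf{1})$ entry $\mathbf{1}^{\mathsf{T}}A\mathbf{1}/d = \abs{I}/\sqrt{d}$, the $\mathbf{1}^{\perp}$-block of $A^{-1}$ equals $(C - \tfrac1a bb^{\mathsf{T}})^{-1} \succeq C^{-1}$, where $C$ is the $\mathbf{1}^{\perp}$-block of $A$, a principal submatrix of $PAP$; therefore $\zeta^{\mathsf{T}}A^{-1}\zeta \ge \|\zeta\|^2/\lambda_{\max}(PAP) \gtrsim n/(\abs{I}/d) \sim d^{3/2}$.

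Finally, a union bound over the events above, which fail with probability at most $e^{-\Omega(nd^{-1/2})} + o(1/d) + O(1/d) < 1/2$ for $d$ large enough, gives the lemma. I expect the main obstacle to be obtaining the \emph{sharp} bounds in (d)--(f): the naive eigenvalue estimates overshoot by a factor $\sqrt{d}$, so one must instead use the exact expression for $A^{-1}\mathbf{1}$ together with the fact that $\zeta$ is orthogonal to the single direction $\mathbf{1}$ along which $A$ is large --- the Schur-complement step is what lets one pay only $\lambda_{\max}(PAP) \sim \abs{I}/d$ rather than $\lambda_{\max}(A) \sim \abs{I}/\sqrt{d}$. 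A secondary subtlety is the pairwise-uncorrelatedness of the $W_{ii'}$, which is what makes the second-moment bound for (b) strong enough.
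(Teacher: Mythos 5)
Your proposal is correct, and while parts (a)--(c) and the two spectral estimates follow the paper's route, your deduction of (d)--(f) is genuinely different. For (a)--(c) you do essentially what the paper does: a scalar tail bound for $\abs{I}$, a matrix Chernoff lower tail for $\lambda_{\min}(A)\gtrsim \abs{I}/d$ (hence invertibility), a matrix Chernoff upper tail for the top eigenvalue of $A$ restricted to $\mathbf{1}^{\perp}$ (the paper conjugates by a partial isometry $R$, you compress by $P=I_d-d^{-1}\mathbf{1}\mathbf{1}^{\mathsf{T}}$ --- the same quantity), and a second-moment/Chebyshev argument for $\norm{\zeta}^2\sim n$, which you organize via the pairwise intersections $\abs{S_i\cap S_{i'}}$ and their pairwise uncorrelatedness rather than the paper's coordinatewise cross-covariance computation; these are equivalent, yours being slightly cleaner. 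The real divergence is in (d)--(f). The paper works in the eigenbasis of $A$ and must control how well the top eigenvector $u_1$ aligns with $\mathbf{1}$ (and is orthogonal to $\zeta$); this forces two applications of the matrix Bernstein inequality to $\norm{A-\E A}$, and the sharper bound $\norm{A-\E A}\lesssim \abs{I}d^{-3/2}$ needed for (f) is precisely where the requirement $n\gtrsim d^{3}\log d$ bites. You avoid eigenvector perturbation altogether: for (d) you use that $\zeta\perp\mathbf{1}$ and a Schur-complement/operator-monotonicity step ($(C-a^{-1}bb^{\mathsf{T}})^{-1}\succeq C^{-1}$ on the invertibility event) to get $\zeta^{\mathsf{T}}A^{-1}\zeta\ge\norm{\zeta}^2/\lambda_{\max}(PAP)\gtrsim d^{3/2}$, and for (e)--(f) you exploit the exact identity $A^{-1}\mathbf{1}=\sqrt{d}\,\abs{I}^{-1}(\mathbf{1}-A^{-1}\zeta)$ --- which the paper itself derives inside the proof of Lemma~\ref{lemma:erm-expressions} but does not reuse here --- giving $\mathbf{1}^{\mathsf{T}}A^{-1}\mathbf{1}=d^{3/2}\abs{I}^{-1}+d\abs{I}^{-2}\zeta^{\mathsf{T}}A^{-1}\zeta$ and $\mathbf{1}^{\mathsf{T}}A^{-2}\mathbf{1}=d\abs{I}^{-2}\norm{\mathbf{1}-A^{-1}\zeta}^2$, which you then estimate with only $\lambda_{\min}(A)\gtrsim\abs{I}/d$ and $\norm{\zeta}^2\lesssim n$. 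What each buys: the paper's route yields finer structural information (near-alignment of $u_1$ with $\mathbf{1}$, $d-\inr{u_1,\mathbf{1}}^2\lesssim\sqrt{d}$), whereas yours is shorter, dispenses with matrix Bernstein entirely, and in fact only uses $n\gtrsim d^{2}$ up to logarithms --- weaker than the assumed $n\gtrsim d^{3}\log d$, so nothing is lost under the lemma's hypotheses.
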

Before presenting the formal proof (see Appendix~\ref{sec:proof-of-lemma-on-quadratic-forms}), we discuss the intuition behind the proof of this lemma. First, observe that $(a)$ follows from the fact that $|I|$ is Binomially distributed with parameters $n$, $d^{-1/2}$. The magnitude of $\|\zeta\|^2$ follows from a direct computation of its expectation and variance. For large enough $n$, we expect that $A \approx \E A$. Assuming this, we may focus on $\E A$, which conditionally on the size of set $I$ has the following simple form:
\[
\E A = \abs{I} \left((d^{-1} - (d^{3/2} + d)^{-1})I_{d} + (d^{3/2} +
  d)^{-1}\mathbf{1}\mathbf{1}^{\mathsf{T}}\right).
\]
Observe that the eigenvector corresponding to the largest eigenvalue of $\E A$ is proportional to $\mathbf{1}$, and the remaining eigenvectors complement this direction and form an orthonormal basis. Moreover, the above expression for $\E A$ implies that $\lambda_1(\E A) \sim \abs{I}d^{-1/2}$ and $\lambda_j(\E A) \sim \abs{I}d^{-1}$ for $j = 2, \ldots, d$. Thus, $\E A$ is invertible and in particular, we have
\[
\mathbf{1}^{\mathsf{T}}(\E A)^{-1}\mathbf{1} = d/\lambda_1(\E A) \lesssim n^{-1}d^2, \quad \text{and} \quad \mathbf{1}^{\mathsf{T}}(\E A)^{-2}\mathbf{1} = d/(\lambda_1(\E A))^2 \lesssim n^{-2}d^3.
\]
Finally, since by \eqref{eq:A1-equals-v} we have $\inr{\zeta, \mathbf{1}} = 0$, the vector $\zeta$ is orthogonal to the first eigenvalue of $\E A$. Therefore,
\[
\zeta^{\mathsf{T}}(\E A)^{-1}\zeta \ge \|\zeta\|^2/\lambda_{2}(\E A) \gtrsim d^{3/2}.
\]
With the above lemmas at hand, we are ready to prove Theorem~\ref{thm:erm-ridge-lower-bound}.

\begin{proof}[Proof of Theorem~\ref{thm:erm-ridge-lower-bound}]
  We work on the event of Lemma~\ref{lemma:bounds-on-quadratic-forms}.
  First, note that combining Lemmas~\ref{lemma:erm-expressions} and
  \ref{lemma:bounds-on-quadratic-forms} we have
  \[
   \norm{\what_{\infty}}^{2} \lesssim d.
  \]
  Thus, on the event of Lemma~\ref{lemma:bounds-on-quadratic-forms}, the
  unconstrained ERM solution $\what_{\infty}$ is also a solution over
  the Euclidean ball of any radius $b$ that satisfies $b \geq c\sqrt{d}$, where
  $c$ is some absolute constant.

  We will now lower bound the expected excess risk of $\what_{\infty}$. Observe that for any vector $x$ and a unit vector $u$ we have $\|x\| \ge |\inr{x, u}|$. Consider the unit vector $u = \zeta / \|\zeta\|$. Denote
  \[
  \beta = \frac{d^{3/2}\abs{I}^{-1}}{
        (n-\abs{I})^{-1}d^{2} + \mathbf{1}^{\mathsf{T}}A^{-1}\mathbf{1}}.
  \]
  Combining Lemmas~\ref{lem:first_lemma} and \ref{lemma:erm-expressions} together with $\inr{\zeta, \mathbf{1}} = 0$ given by \eqref{eq:A1-equals-v} we have 
  \begin{align}
R(\what_{\infty}) - R(\omega^{*}_{b}) &\geq
    \frac{1}{2}d^{-3/2}
    \bigg\|\left(
	\beta- \frac{\sqrt{d} - 1}{2\sqrt{d} - 1}\right)
      \mathbf{1}
      -
      \beta
      A^{-1}\zeta
     \bigg\|^{2} \\
   &\geq
    \frac{1}{2}d^{-3/2}
    \left(\inr{\frac{\zeta}{\|\zeta\|},\left(
	\beta- \frac{\sqrt{d} - 1}{2\sqrt{d} - 1}\right)
      \mathbf{1}
      -
      \beta
      A^{-1}\zeta}\right)^{2}
      \\
      &= \frac{1}{2}d^{-3/2}
    \left(\inr{\frac{\zeta}{\|\zeta\|},
      \beta
      A^{-1}\zeta}\right)^{2}.
    \label{eq:excess-risk-lower-bound-alpha-beta}
  \end{align}
  By Lemma \eqref{lemma:bounds-on-quadratic-forms} we have $\beta \gtrsim 1$, with probability at least $\frac{1}{2}$. Hence, the lower bound \eqref{eq:excess-risk-lower-bound-alpha-beta} implies on the event of Lemma \ref{lemma:bounds-on-quadratic-forms} that
  \[
  R(\what_{\infty}) - R(\omega^{*}_{b}) \ge \frac{1}{2}d^{-3/2}\beta^2\left(\frac{\zeta^{\mathsf{T}} A^{-1} \zeta}{\|\zeta\|}\right)^2 \gtrsim \frac{d^{3/2}}{n}.
  \]
  Since the event of Lemma \ref{lemma:bounds-on-quadratic-forms} holds with probability at least $\frac{1}{2}$, it follows that $\E  R(\what_{\infty}) - R(\omega^{*}_{b}) \gtrsim \frac{d^{3/2}}{n}$. This concludes the proof of our theorem.
\end{proof}
\section*{Acknowledgements}
We are indebted to Shahar Mendelson for fruitful discussions and valuable feedback: in particular, for suggesting us the technique to analyze the quadratic process in Theorem \ref{thm:main_thm} and for motivating us to work on the lower bounds. We are also grateful to Manfred Warmuth for providing a reference to the predictor that removes the excess logarithmic factor appearing in one of our results. Finally, we thank Jaouad Mourtada for many related discussions.

This work was conducted when Nikita Zhivotovskiy was at Google Research, Z\"{u}rich. Tomas Va\v{s}kevi\v{c}ius is supported by the EPSRC and MRC through the OxWaSP CDT programme (EP/L016710/1). 

\bibliography{mybib} 
\begin{appendix}


\section{Proof of Theorem \ref{thm:main_thm_second}}
\label{sec:secondmainthm}
Our analysis is based on the notion of average stability (for more details we refer to e.g., \citep*{shalev2014understanding} and reference therein). Although there is a vast literature on stability based techniques, with several recent related papers \citep*{koren2015fast, gonen2017average}, we could not find any general result that implies Theorem \ref{thm:main_thm_second}. Therefore, we provide an elementary proof. One of the differences compared with some of the previous results is that we provide an exact formula for the average stability as well as an exact analysis of the fitting term as an intermediate step in our proof. Our analysis of the fitting term exploits the curvature of the quadratic loss and, in particular, allows to rewrite the stability term as the multiplier term that appears in Theorem~\ref{thm:main_thm}. Crucially, stability based approach allows us to replace the quadratic term appearing in the proof of Theorem \ref{thm:main_thm} by a bias term that scales as $\frac{\lambda b^2}{n}$ and consequently, removes multiplicative factor $\log(\min\{n, d\})$ that is unimprovable for constrained ERM as we demonstrate in Proposition~\ref{prop:lowerbound}.

Similarly to the proof of Theorem~\ref{thm:main_thm} presented in Section~\ref{section:erm-proof}, we drop the subscript $b$ from $\omega^{*}_{b}$ and denote it by $\omega^{*}$ in the rest of this section.
We also introduce an additional independent element $(X_{n + 1}, Y_{n + 1})$ distributed according to $P_{r}$ to the sample. With a slight abuse of notation, we define for $j = 1, \ldots, n + 1$ the penalized (but not normalized by the sample size as in \eqref{eq:samplecov}) empirical second moment matrices by
\[
\tilde{\Sigma}_{\lambda} = \lambda I_d + \sum\limits_{i = 1}^{n + 1}X_iX_i^{\mathsf{T}} \quad \text{and} \quad \tilde{\Sigma}_{\lambda}^{(-j)} = \lambda I_d + \sum\limits_{i = 1, i \neq j}^{n + 1}X_iX_i^{\mathsf{T}}.
\]
For any $\lambda > 0$, the (unique) ridge estimator \eqref{eq:ridge} constructed on all but the $j$-th sample, and all the $n+1$ samples respectively, is defined as follows:
 \begin{equation}
     \label{eq:twodefs}
    \widehat{\omega}_{\lambda}^{(-j)} = (\tilde{\Sigma}^{(-j)}_{\lambda})^{-1}\sum_{i=1, i \neq j}^{n + 1} Y_{i}X_{i} \quad \text{and} \quad \tilde{\omega}_{\lambda} = \tilde{\Sigma}_{\lambda}^{-1}\sum_{i=1}^{n + 1} Y_{i}X_{i},
\end{equation}
Therefore, the ridge estimator defined in \eqref{eq:ridge} is trained on the first $n$ samples and hence it satisfies
\[
    \widehat{\omega}_{\lambda} = \widehat{\omega}_{\lambda}^{(-(n + 1))}.
\]
Let $\E$ denote the expectation with respect to all $n+1$ samples $(X_1, Y_1), \ldots, (X_{n + 1}, Y_{n +1})$. Since the sample is exchangeable we have
\begin{align*}
&\E R(\widehat{\omega}_{\lambda}) - R(\omega^*) =  \E(Y_{n + 1} - \langle \widehat{\omega}_{\lambda}, X_{n + 1}\rangle)^2 - \frac{1}{n + 1}\sum\limits_{i = 1}^{n + 1}\E(Y_{i} - \langle \omega^*, X_{i}\rangle)^2
\\
&\quad= \frac{1}{n + 1}\Bigl(\sum\limits_{i = 1}^{n + 1}\underbrace{\E\left((Y_{i} - \langle \widehat{\omega}_{\lambda}^{(-i)}, X_{i}\rangle)^2 -(Y_{i} - \langle \tilde{\omega}_{\lambda}, X\rangle)^2\right)}_{\text{Average stability}} 
\\
&\quad\quad\quad\quad+ \E\underbrace{\sum\limits_{i = 1}^{n + 1}\left((Y_{i} - \langle \tilde{\omega}_{\lambda}, X_{i}\rangle)^2 - (Y_{i} - \langle \omega^*, X_i\rangle)^2\right)}_{\text{Fitting term}}\Bigr),
\end{align*}
where the two terms in the last display are interpreted as the \emph{fitting-stability trade-off} \citep*{shalev2014understanding}. Indeed, the first term corresponds to the average sensitivity of the estimator to the perturbation in one point of the sample and is called the \emph{average stability}. The second term shows how the empirical loss of the estimator compares to the empirical loss of the best linear predictor $\omega^*$ and is called the \emph{fitting term}. The remainder of the proof is devoted to the analysis of these two terms.

\myparagraph{Average stability term}
We provide exact computations for the average stability term via the Sherman-Morrison formula (see e.g., \citep{hager1989updating}), which states that for any $j = 1, \ldots, n + 1$ we have
\begin{equation}
\label{eq:sherman-morrison}
 \left(\tilde{\Sigma}_{\lambda}^{(-j)}\right)^{-1} =
    \tilde{\Sigma}_{\lambda}^{-1}
    + \frac{
      \tilde{\Sigma}_{\lambda}^{-1}
      X_{j}
      X_{j}^{\mathsf{T}}
       \tilde{\Sigma}_{\lambda}^{-1}
    }{
      1 - X_j^{\mathsf{T}}\tilde{\Sigma}_{\lambda}^{-1}X_j
    }.
\end{equation}
To simplify the notation we denote for the rest of the proof the $j$-th (random) leverage score by
\begin{equation}
\label{eq:levscore}
h_j = X_j^{\mathsf{T}}\tilde{\Sigma}_{\lambda}^{-1}X_j.
\end{equation}
Using the definition \eqref{eq:twodefs}, Sherman-Morrison formula \eqref{eq:sherman-morrison} and simple algebra we demonstrate the following \emph{equality}:
\begin{align*}
&(Y_{i} - \langle\widehat{\omega}_{\lambda}^{(-i)}, X_i\rangle)^2 
\\
&= \left(Y_{i} - \left\langle (\tilde{\Sigma}^{(-i)}_{\lambda})^{-1}\sum_{j = 1}^{n + 1} Y_{j}X_{j}, X_i\right\rangle + \left\langle (\tilde{\Sigma}^{(-i)}_{\lambda})^{-1}Y_{i}X_{i}, X_i\right\rangle\right)^2
\\
&= \left(Y_{i} - \left\langle (\tilde{\Sigma}^{(-i)}_{\lambda})^{-1}\sum_{j = 1}^{n + 1} Y_{j}X_{j}, X_i\right\rangle + Y_i\left(h_i + \frac{h_i^2}{1 - h_i}\right)\right)^2
\\
&= \left(Y_{i} - \left\langle \tilde{\Sigma}^{-1}_{\lambda}\sum_{j = 1}^{n + 1} Y_{j}X_{j}, X_i\right\rangle - \frac{1}{1 - h_i}\left\langle \tilde{\Sigma}^{-1}_{\lambda}X_iX_i^{\mathsf{T}}\tilde{\omega}_{\lambda}, X_i\right\rangle + \frac{Y_ih_i}{1 - h_i}\right)^2
\\
&= \left(Y_{i} - \langle\tilde{\omega}_{\lambda}, X_i\rangle - \frac{h_i}{1 - h_i}\left\langle \tilde{\omega}_{\lambda}, X_i\right\rangle + \frac{Y_ih_i}{1 - h_i}\right)^2 = \left(\frac{1}{1 - h_i}\right)^2\left(Y_{i} - \langle\tilde{\omega}_{\lambda}, X_i\rangle\right)^2.
\end{align*}
The above result implies
\begin{align}
    \E\left((Y_{i} - \langle \widehat{\omega}_{\lambda}^{(-i)}, X_{i}\rangle)^2 -(Y_{i} - \langle \tilde{\omega}_{\lambda}, X_i\rangle)^2\right) &= \E\left(\left(\frac{1}{1 - h_i}\right)^2 -1\right)(Y_{i} - \langle \tilde{\omega}_{\lambda}, X_i\rangle)^2
    \\
    \label{eq:stability-exact}
    &=  \E\left(\frac{h_{i}}{(1-h_{i})^{2}} + \frac{h_{i}}{1 - h_{i}}\right)(Y_{i} - \langle \tilde{\omega}_{\lambda}, X_i\rangle)^2.
\end{align}
Finally, we show that if $\lambda > cr^2$ for some numerical constant $c > 0$, then $h_i$ is separated from $1$. First, observe that $X_jX_j^{\mathsf{T}} + \lambda I_d \preceq \tilde{\Sigma}_{\lambda} $. Since $X_jX_j^{\mathsf{T}}$ is a rank one matrix having at most one non-zero positive eigenvalue denoted by $\mu$, we have
\begin{align}
\label{eq:hj}
0 &\le h_j = X_j^{\mathsf{T}}\tilde{\Sigma}_{\lambda}^{-1}X_j = \Tr(\tilde{\Sigma}_{\lambda}^{-1}X_jX_j^{\mathsf{T}}) \nonumber
\\
&\le \Tr((X_jX_j^{\mathsf{T}} + \lambda I_d)^{-1}X_jX_j^{\mathsf{T}}) = \frac{\mu}{\mu + \lambda} \le \frac{r^2}{r^2 + \lambda} \le \frac{1}{1 + c} ,
\end{align}
where we applied the facts that $(X_jX_j^{\mathsf{T}} + \lambda I_d)^{-1}X_jX_j^{\mathsf{T}}$ is rank one matrix, $x\mapsto \frac{x}{x + \lambda}$ is monotone, and $\mu \le r^2$. Using simple algebra and the identity given in \eqref{eq:stability-exact}, for any $\lambda \ge cr^2$ we have
\begin{equation}
\label{eq:averagestability}
\E\left((Y_{i} - \langle \widehat{\omega}_{\lambda}^{(-i)}, X_{i}\rangle)^2 -(Y_{i} - \langle \tilde{\omega}_{\lambda}, X_i\rangle)^2\right) \le \frac{1 + 3c + 2c^2}{c^2}\E h_i(Y_{i} - \langle \tilde{\omega}_{\lambda}, X_i\rangle)^2.
\end{equation}
\myparagraph{Fitting term}
In contrast to the naive upper bound, which follows by adding and subtracting $\frac{1}{n+1}(\lambda\norm{\tilde{\omega}_{\lambda}}^{2} - \norm{\omega^*}^{2})$ to the (unnormalized) fitting term and using the fact that $\tilde{\omega}_{\lambda}$ minimizes the ridge regression optimization objective:
\[
  \E \sum\limits_{i = 1}^{n + 1}\left((Y_{i} - \langle \tilde{\omega}_{\lambda}, X_{i}\rangle)^2 - (Y_{i} - \langle \omega^*, X_i\rangle)^2\right)
  \leq \lambda\left(\norm{\omega^{*}}^{2} - \norm{\tilde{\omega}_{\lambda}^{2}}\right)
  \leq \lambda b^{2},
\]
in the proof below, we exploit the curvature of the squared loss, which results in an improved upper bound. The improvement that comes from extra negative terms allow us to compensate rewrite the average stability term as localized multiplier term, thus establishing a direct link between localization and stability analysis. Recall the definition \eqref{eq:omegaandxi} of the noise variable $\xi$. We show that the following deterministic inequality holds for the fitting term:
\begin{equation}
    \label{eq:geometricproperty}
    \sum\limits_{i = 1}^{n + 1}\left((Y_{i} - \langle \tilde{\omega}_{\lambda}^{}, X_{i}\rangle)^2 - (Y_{i} - \langle \omega^*, X_i\rangle)^2\right) \le -\sum\limits_{i = 1}^{n + 1}(\xi_i - \widehat{\xi}_i)^2 + \lambda\|\omega^*\|^2/2,
\end{equation}
where we denote $\widehat{\xi}_i = Y_i - \langle \tilde{\omega}_{\lambda}, X_i\rangle$. Since $\tilde{\omega}_{\lambda}$ by the first order optimality conditions nullifies the gradient of the penalized empirical risk, that is
$$
  0 = \nabla_{\tilde{\omega}_{\lambda}} \left(\sum_{i=1}^{n+1} (Y_{i} - \ip{\tilde{\omega}_{\lambda}}{X_{i}})^{2} + \lambda\norm{\tilde{\omega}_{\lambda}}^{2}\right)
$$
it follows that
\[
\sum\limits_{i = 1}^{n + 1}X_i(\langle\tilde{\omega}_{\lambda}, X_i\rangle- Y_i)+\lambda \tilde{\omega}_{\lambda} = 0.
\]
Taking an inner product with $\tilde{\omega}_{\lambda} - \omega^*$ in the above equality yields
\[
\sum\limits_{i = 1}^{n + 1}(\langle\tilde{\omega}_{\lambda}, X_i\rangle- Y_i)(\langle\tilde{\omega}_{\lambda}, X_i\rangle - \langle\omega^*, X_i\rangle) =  \lambda(\langle \tilde{\omega}_{\lambda} , \omega^*\rangle - \|\tilde{\omega}_{\lambda}\|^2).
\]
The bound \eqref{eq:geometricproperty} follows by combining the last equality together with the following formula for the excess loss $P_n {\cal L}_{\tilde{\omega}_{\lambda}}$ as in the proof of Theorem \ref{thm:main_thm} (cf.\ Equation~\eqref{eq:excess-loss-decomposition}):
\begin{align*}
&\sum\limits_{i = 1}^{n + 1}\left((Y_{i} - \langle \tilde{\omega}_{\lambda}, X_{i}\rangle)^2 - (Y_{i} - \langle \omega^*, X_i\rangle)^2\right) 
\\
&= -\sum\limits_{i = 1}^{n + 1}\inr{\tilde{\omega}_{\lambda} - \omega^*,X_i}^2 + 2\sum\limits_{i = 1}^{n + 1}(\inr{\tilde{\omega}_{\lambda},X_i}-Y_i)\cdot \inr{\tilde{\omega}_{\lambda}-\omega^*,X_i}
\\
&= -\sum\limits_{i = 1}^{n + 1}(\xi_i - \widehat{\xi}_i)^2 + 2\lambda(\langle \tilde{\omega}_{\lambda} , \omega^*\rangle - \|\tilde{\omega}_{\lambda}\|^2)
\\
&\le -\sum\limits_{i = 1}^{n + 1}(\xi_i - \widehat{\xi}_i)^2 + \lambda\|\omega^*\|^2/2,
\end{align*}
where in the last line we apply the Cauchy-Schwarz inequality and the fact that $\|\tilde{\omega}_{\lambda}\|\|\omega^*\| - \|\tilde{\omega}_{\lambda}\|^2 \le \|\omega^*\|^2/4$.

\myparagraph{Completing the proof} We are ready to finish the proof of Theorem \ref{thm:main_thm_second} via the stability-fitting trade-off and inequalities \eqref{eq:averagestability} and \eqref{eq:geometricproperty}. Indeed, optimizing the below quadratic function yields the following inequality:
\[
\alpha\widehat{\xi}_i^2 - (\xi_i - \widehat{\xi}_i)^2 = \widehat{\xi}_i^2(\alpha - 1) + 2\widehat{\xi}_i\xi_i - \xi_i^2 \le \xi_i^2\left(\frac{\alpha}{1 - \alpha}\right),
\]
which holds for any $0 < \alpha < 1$, we have that if $\frac{1 + 3c + 2c^2}{c^2}h_i < 1$, then
\begin{align*}
&\E R(\widehat{\omega}_{\lambda}) - R(\omega^*) 
\\
&\le \frac{1}{n + 1}\E\left(\sum\limits_{i = 1}^{n + 1}\left(\frac{1 + 3c + 2c^2}{c^2}h_i\widehat{\xi}_i^2 -(\xi_i - \widehat{\xi}_i)^2\right) + \lambda\|\omega^*\|^2/2\right)
\\
&\le \frac{1}{n + 1}\E\left(\sum\limits_{i = 1}^{n + 1}\left(\frac{1 + 3c + 2c^2}{c^2}h_i{\xi}_i^2\right)/\left(1 -  \frac{1 + 3c + 2c^2}{c^2}h_i\right) + \lambda\|\omega^*\|^2/2\right).
\end{align*}
Using the inequality on the leverage scores given in \eqref{eq:hj}, a simple computation shows that the choice $c = 3$  guarantees that $\frac{1 + 3c + 2c^2}{c^2}/\left(1 -  \frac{1 + 3c + 2c^2}{c^2}h_i\right) \le 14$, which concludes our proof.

Observe that we never used any specific properties of $\omega^*$ and it can be replaced by $\emph{any}$ vector in $\R^{d}$. In contrast, our proof technique based on localization (cf. Section~\ref{section:erm-proof}) crucially relies on the fact that $\omega^{*} = \omega^{*}_{b}$ minimizes the population risk over all vectors in $\mathcal{W}_{b}$. Finally, note that we have established our results for the leverage scores $h_i$ that are computed on the sample of size $n + 1$. However, it is easy to see that one may decrease the sample size by one so that the original claim of Theorem \ref{thm:main_thm_second} holds.
\qed

\section{Proof of Proposition~\ref{prop:lowerbound}}
\label{sec:prooflowerbound}
Without loss of generality we set $r = 1$ (otherwise we can rescale the covariates introduced below by $r$).
Let $Y = 0$, so that $\omega^*_b = 0$ and $Y = \inr{\omega^*_b, X}$ (i.e., the problem is noise-free). Let $k = k(d, n) \le d$ be an integer to be specified later. Let the covariates $X$ follow a uniform distribution on the set of first $k$ basis vectors $e_{1}, \ldots, e_{k}$. The distribution of the samples $(X, Y)$ hence satisfies $\|X\| \le 1$.

We aim to choose the value of $k$ such that with probability at least $1/2$, at most $k - 1$ out of the $k$ basis vectors are observed in the random sample $X_1, \ldots, X_n$ (i.e., at least one of the basis vectors is not observed). This analysis follows from the coupon collector argument showing that one needs, with probability at least $1/2$, a sample of size at least $ck\log k$ to observe all $k$ vectors where $c$ is a numerical constant. Let $T$ be a random variable that counts the (random) number of trials needed to observe all $k$ basis vectors.  A basic result (see, e.g., \citep*[Section 3.6]{motwani2010randomized}) shows that for any $t > 0$,
\[
\Pr\left(\left|T - kH_{k}\right| \ge tk \right) \le \frac{\pi^2}{6t^2},
\]
where $H_{k}$ is the $k$-th Harmonic number. Since $\log k < H_{k}$ we have, with probability at least $1/2$,
\[
T > k \log k - \frac{\pi}{\sqrt{3}}k \ge \frac{k}{2}\log k,
\]
where a simple computation shows that the last inequality holds provided that $k \ge 38$. In what follows, we choose $k$ to be the smallest integer such that $n \leq \frac{1}{2}k \log k$; the condition $k \ge 38$ can be always be satisfied provided that $n$ is large enough. By the above, with probability at least $1/2$ there is at least one basis vector among $e_{1}, \ldots, e_{k}$ such that it is not included in the sample $X_{1}, \ldots, X_n$. Denote such a (random) basis vector by $e^*$ and the corresponding event by $E$. Observe that on this event the vector $be^* \in \mathcal{W}_{b}$ satisfies $\inr{be^*, X_i} = 0$;  thus it is \emph{one of} the least squares solutions (on the observed sample) in $\mathcal{W}_{b}$ with $R(be^*) = b^2/k$. Let $\widehat{\omega}$ be equal to $be^*$ on $E$ and equal to any linear least squares on the complementary event $\overline{E}$. Using Markov's inequality we have
\[
\E R(\widehat{\omega}) - R(\omega^*_b) = \E R(\widehat{\omega}) \ge \frac{b^2}{k}\Pr\left(R(\widehat{\omega}) \ge \frac{b^2}{k}\right) \ge \frac{b^2}{k}\Pr\left(E\right) \ge \frac{b^2}{2k} \ge \frac{b^2\log n}{10n},
\]
where the last step can be proved as follows: by our choice of $k$, we have
$\frac{1}{2}(k - 1)\log (k - 1) \leq n \leq \frac{1}{2}k\log k \le (k - 1)^2$ which implies $k \leq \frac{2n}{\log (k - 1)} + 1\leq \frac{4n}{\log n} + 1 \le \frac{5n}{\log n}$. This concludes our proof.
\qed

\section{Proof of Proposition~\ref{prop:l4l2}}
\label{sec:proof-of-l4l2}
Let $c \ge 1$ denote the numerical constant that satisfies the assumption $\E\inr{\omega, X}^4 \le c\left(\E\inr{\omega, X}^2\right)^2$.
Since the leverage scores $X^{\mathsf{T}}(n\widehat{\Sigma}_{\lambda})^{-1}X$ are in $[0,1]$,
using $\|\xi\|_{L_\infty} \le m + rb$ we have
\[
\xi^2X^\mathsf{T}\widehat{\Sigma}_{\lambda}^{-1}X \le n(m + rb)^2.
\]
Let $E$ denote the event that $\|\Sigma^{1/2}\widehat{\Sigma}_{\lambda}^{-1}\Sigma^{1/2}\| \le 2$. Combining the above inequality, the Cauchy-Schwarz inequality, and the $L_4$--$L_2$ moment equivalence assumption $\|\xi\|_{L_4}\lesssim \|\xi\|_{L_2} = \sqrt{R(\omega^*_b)} \leq m$, we have 
\begin{align}
    \E\xi^2X^\mathsf{T}\widehat{\Sigma}_{\lambda}^{-1}X
    &=  \E\xi^2 (X^\mathsf{T}\Sigma^{-1/2})(\Sigma^{1/2}\widehat{\Sigma}_{\lambda}^{-1}\Sigma^{1/2})(\Sigma^{-1/2}X)\ind_{E} + \E\xi^2X^\mathsf{T}\widehat{\Sigma}_{\lambda}^{-1}X\ind_{\overline{E}}
    \\
    &\le \E\xi^2\|\Sigma^{-1/2}X\|^2
    \|\Sigma^{1/2}\widehat{\Sigma}_{\lambda}^{-1}\Sigma^{1/2}\|\ind_{E} + n(m + rb)^2\Pr(\overline{E})
    \\
    &\le 2\E\xi^2\|\Sigma^{-1/2}X\|^2 + n(m + rb)^2\Pr(\overline{E})
    \\
    &\le 2\sqrt{\E\xi^4}\sqrt{\E\|\Sigma^{-1/2}X\|^4} + n(m + rb)^2\Pr(\overline{E})
     \\
    \label{eq:l2-l4-proof-intermediate-step}
    &\lesssim 2m^2\sqrt{\E\|\Sigma^{-1/2}X\|^4} + n(m + rb)^2\Pr(\overline{E}).
\end{align}
A direct calculation \citep[Remark 3]{mourtada2019exact} shows that $\E\|\Sigma^{-1/2}X\|^4 \lesssim d^2$ under our assumption. Indeed, fixing $\omega_i = \Sigma^{-1/2} e_i$ the following holds for $i = 1, \ldots, d$:
\[
\E\inr{\Sigma^{-1/2}e_i, X}^4 \le c\left(\E\inr{\Sigma^{-1/2}e_i, X}^2\right)^2 = c.
\]
Therefore, by the Cauchy-Schwarz inequality we have
\begin{align}
    \E\|\Sigma^{-1/2}X\|^4 &= \E\left(\sum\limits_{i = 1}^d\inr{e_i, \Sigma^{-1/2}X}^{2}\right)^2 \nonumber
    \\
    &\le \sum\limits_{i, j}^d\sqrt{\E\left(\inr{e_i, \Sigma^{-1/2}X}\right)^4}\sqrt{\E\left(\inr{e_j, \Sigma^{-1/2}X}\right)^4} \le cd^2. \label{eq:l2-l4-marginals-d-bound}
\end{align}
Under the $L_{4}$--$L_{2}$ moment equivalence assumption on the marginals $\ip{\omega}{X}$, the following lower tail bound given in \citep[Theorem 1.1]{oliveira2016lower} shows that for any $\delta \in (0, 1)$, with probability at least $1 - \delta$, simultaneously for all $v\in \mathbb{R}^d$, it holds that
\begin{equation}
\label{eq:olivreiabound}
v^\mathsf{T}\widehat{\Sigma}_{0}v \ge \left(1 - 9c\sqrt{\frac{d + 2\log \frac{2}{\delta}}{n}}\right)v^\mathsf{T}\Sigma v.
\end{equation}
For $\delta = \frac{1}{n}$ we have $1 - 9c\sqrt{\frac{d + 2\log (2n)}{n}} \ge \frac{1}{2}$, provided that $n \gtrsim d$. Hence, by \eqref{eq:olivreiabound}, with probability at least $1 - \frac{1}{n}$,
\begin{align*}
    \|\Sigma^{1/2}\widehat{\Sigma}_{\lambda}^{-1}\Sigma^{1/2}\|
    &= \left(\inf\limits_{v \in S^{d - 1}}v^\mathsf{T}\Sigma^{-1/2}\widehat{\Sigma}_{\lambda}\Sigma^{-1/2}v\right)^{-1} 
    \\
    &= \left(\inf\limits_{u: u^\mathsf{T}\Sigma u = 1}u^\mathsf{T}\widehat{\Sigma}_{\lambda}u\right)^{-1} \le \left(\inf\limits_{u: u^\mathsf{T}\Sigma u = 1}u^\mathsf{T}\widehat{\Sigma}_{0}u\right)^{-1} \le 2.
\end{align*}
The above inequality implies $\Pr(\overline{E}) \le \frac{1}{n}$ and  combined with inequalities \eqref{eq:l2-l4-proof-intermediate-step} and \eqref{eq:l2-l4-marginals-d-bound} yields
\[
    \E\xi^2X^\mathsf{T}\widehat{\Sigma}_{\lambda}^{-1}X \lesssim dm^2 + (m + rb)^2 \lesssim dm^{2} + r^{2}b^{2}.
\]
The proof is complete.

We remark that one may choose $\delta = \exp(-c_0n)$ in \eqref{eq:olivreiabound}, for some small enough numerical constant $c_0$ and improve the resulting bound. This may be important since as discussed in Section~\ref{sec:corollaries}, a bound on the quadratic term better than $\frac{r^2b^2}{n}$ could be possible under moment equivalence assumptions. Since this is not the main focus of our paper we do not pursue this direction.
\qed

\section{Proofs of Lemmas Supporting Theorem~\ref{thm:erm-ridge-lower-bound}}
\subsection{Proof of Lemma~\ref{lem:first_lemma}}
\label{sec:proof-of-first-lemma}
The proof is split into two steps.
We first compute $\omega^{*}_{\infty} = \inf\limits_{\omega \in \R^{d}} R(\omega)$
and show that $\norm{\omega^{*}_{\infty}} \leq \sqrt{d}/2$
so that $\omega_{\infty}^{*} = \omega^{*}_b$ whenever $b \geq \sqrt{d}/2$.
Next, we show that the lower bound follows via the Bernstein assumption
\eqref{eq:bernstein_eq}.

\myparagraph{Computing $\omega^{*}_b$}

Differentiating $R(\omega)$ with respect to $\omega$ and
applying the first order optimality conditions, we obtain the
following well-known expression for an unconstrained minimizer of the
population risk over $\R^{d}$: $\omega^{*}_{\infty} = \Sigma^{-1}\E XY$,
where $\Sigma = \E XX^{\mathsf{T}}$.
A simple calculation shows that
\begin{align}
  \label{eq:alpha-beta-definition}
  \Sigma = \alpha \mathbf{1}\mathbf{1}^{\mathsf{T}} + \beta I_{d},
  &\quad\text{with}\quad
  \alpha = (1 - d^{-1/2})d^{-2} + (d^{2} + d^{3/2})^{-1}
  \\
  &\quad\text{and}\quad
  \beta = d^{-3/2} - (d^{2} + d^{3/2})^{-1}.
\end{align}
By the Sherman-Morrison formula, we have
$$
  \Sigma^{-1}
  =
  \left(\beta I_{d}\right)^{-1}
  - \frac{
  \left(\beta I_{d}\right)^{-1}
  \alpha\mathbf{1}\mathbf{1}^{\mathsf{T}}
  \left(\beta I_{d}\right)^{-1}
  }{
    1 + \alpha\mathbf{1}^\mathsf{T}
    \left(\beta I_{d}\right)^{-1}
    \mathbf{1}
  }
  =
  \beta^{-1}I_{d}
  - \frac{\alpha\beta^{-2}}{1 + \alpha\beta^{-1}d}
    \mathbf{1}\mathbf{1}^\mathsf{T},
$$
which plugged into the equation $\omega^{*}_{\infty} = \Sigma^{-1}\E XY$ yields
\begin{align}
  \label{eq:w-star-solution}
  \omega^{*}_b
  =
  \left(\beta^{-1} - \frac{\alpha\beta^{-2}d}{1 + \alpha\beta^{-1}d}\right)
  (1-d^{-1/2})d^{-1}\cdot\mathbf{1}
  =\frac{\sqrt{d}-1}{2\sqrt{d} - 1}\cdot\mathbf{1}.
\end{align}
For all $d \geq 1$ we have $0 \leq (\sqrt{d} - 1)/ (2\sqrt{d} - 1) \leq 1/2$
and, in particular, $\norm{\omega^{*}_{\infty}} \leq \sqrt{d}/2 \leq b$.

\myparagraph{Lower bounding the excess risk}
Let $\omega$ denote any parameter vector in $\R^{d}$.
Since we have already shown that $\omega^{*}_{b}$ minimizes $R(\omega)$ over
all of $\R^{d}$, by the Bernstein assumption stated in \eqref{eq:bernstein_eq}
we have
\begin{align*}
  R(\omega) - R(\omega^{*}_{b})
  &\geq \E \ip{X}{\omega - \omega^{*}_{b}}^{2}
  = (\omega - \omega^{*}_{b})^{\mathsf{T}} \Sigma (\omega - \omega^{*}_{b})
  \\
  &= (\omega - \omega^{*}_{b}) (\alpha\mathbf{1}\mathbf{1}^{\mathsf{T}} + \beta
  I_{d})(\omega - \omega^{*}_{b}),
\end{align*}
with the values of $\alpha$ and $\beta$ given in~\eqref{eq:alpha-beta-definition}.
Since $\mathbf{1}\mathbf{1}^{\mathsf{T}}$ is positive semi-definite,
it hence follows that
$$
  R(\omega) - R(\omega^{*}_{b})
  \geq \beta \norm{\omega - \omega^{*}_{b}}^{2}.
$$
Finally, for $d \geq 4$ we have $\beta \geq \frac{1}{2}d^{-3/2}$, which
completes our proof.
\qed

\subsection{Proof of Lemma~\ref{lemma:erm-expressions}}
\label{sec:proof-of-erm-expressions-lemma}
\myparagraph{Computing $\what_{\infty}$}

We set once again $\alpha = (n - \abs{I})d^{-2}$ and
$y = \mathbf{1}^{\mathsf{T}}A^{-1}\mathbf{1}$. Combining~\eqref{eq:unconstrained-erm-solution} and~\eqref{eq:bad-covariance-definition} with $\sum_{i=1}^{n}X_{i}Y_{i} = (n - \abs{I})\mathbf{1}/d$ and the Sherman-Morrison formula we have
\begin{equation}
  \label{eq:w_hat-sherman-morrison}
  \what_{\infty}
  = d\alpha \left(
    A^{-1}\mathbf{1} - \frac{\alpha y A^{-1}\mathbf{1}}{1 + \alpha y}
  \right)
  = \left(
    d\alpha - \frac{d\alpha^{2} y}{1 + \alpha y}
  \right)
  A^{-1}\mathbf{1}
  =
  \frac{d\alpha}{1 + \alpha y}
  A^{-1}\mathbf{1}.
\end{equation}
By~\eqref{eq:A1-equals-v}, we have $A\mathbf{1} =
\abs{I}d^{-1/2}\mathbf{1} + \zeta$. Multiplying both sides by $A^{-1}$ and
rearranging yields
\[
  A^{-1}\mathbf{1} = \abs{I}^{-1}d^{1/2}(\mathbf{1} - A^{-1}\zeta).
\]
Plugging the above into \eqref{eq:w_hat-sherman-morrison} yields
\begin{equation}
  \label{eq:w_hat-final-expression}
  \what_{\infty}
  =
  \frac{d^{3/2}\abs{I}^{-1}\alpha}{1 + \alpha y}
  (\mathbf{1} - A^{-1}\zeta).
\end{equation}

\myparagraph{Computing $\norm{\what_{\infty}}^{2}$}
Using the computations as above, we obtain
\begin{equation}
  \label{eq:what-norm-initial-equation}
  \norm{\what_\infty}^{2}
  = \ip{\what_\infty}{\what_\infty}
  = (n - \abs{I})^{2}d^{-2} \cdot
  \mathbf{1}^{\mathsf{T}} (n\widehat{\Sigma})^{-2} \mathbf{1}.
\end{equation}
To simplify the notation, let $\alpha = (n - \abs{I})d^{-2}$,
$x = \mathbf{1}^{\mathsf{T}}A^{-2}\mathbf{1}$
and $y = \mathbf{1}^{\mathsf{T}}A^{-1}\mathbf{1}$.
Applying the Sherman-Morrison formula together with
\eqref{eq:bad-covariance-definition} we have
\begin{align*}
  &\mathbf{1}^{\mathsf{T}} (n\widehat{\Sigma})^{-2} \mathbf{1}
  \\
  &=
  \mathbf{1}^{\mathsf{T}}
  \left(
   A^{-1}
   -
   \frac{\alpha A^{-1}\mathbf{1}\mathbf{1}^{\mathsf{T}}A^{-1}}
        {1 + \alpha y}
  \right)^{2}
  \mathbf{1} \\
  &=
  \mathbf{1}^{\mathsf{T}}
  \left(
   A^{-2}
   -
   \frac{\alpha A^{-2}\mathbf{1}\mathbf{1}^{\mathsf{T}}A^{-1}}
        {1 + \alpha y}
   -
   \frac{\alpha A^{-1}\mathbf{1}\mathbf{1}^{\mathsf{T}}A^{-2}}
        {1 + \alpha y}
   +
   \frac{\alpha^{2}
         A^{-1}\mathbf{1}\mathbf{1}^{\mathsf{T}}
         A^{-2}
         \mathbf{1}\mathbf{1}^{\mathsf{T}}A^{-1}
        }
        {(1 + \alpha y)^{2}}
  \right)
  \mathbf{1} \\
  &=
   x
   -
   \frac{\alpha x y}{1 + \alpha y}
   -
   \frac{\alpha y x} {1 + \alpha y}
   +
   \frac{\alpha^{2} yxy}{(1 + \alpha y)^{2}} =
  \frac{x}{(1 + \alpha y)^{2}}.
\end{align*}
Plugging the above into \eqref{eq:what-norm-initial-equation} yields
\[
  \norm{\what_\infty}^{2}
  =
    (n - \abs{I})^{2} d^{-2}
    \frac{\mathbf{1}^{\mathsf{T}}A^{-2}\mathbf{1}}
         {\left(1 + (n - \abs{I})d^{-2}
           \mathbf{1}^{\mathsf{T}}A^{-1}{\mathbf{1}}\right)^{2}}
  \leq
  n^{2}d^{-2}\mathbf{1}^{\mathsf{T}}A^{-2}\mathbf{1}.
\]
The claim follows. \qed

\subsection{Proof of Lemma~\ref{lemma:bounds-on-quadratic-forms}}
\label{sec:proof-of-lemma-on-quadratic-forms}
The proof is based on applying the union bound on the probability of several events. Adjusting the constants one may always guarantee that the statement of Lemma~\ref{lemma:bounds-on-quadratic-forms} holds with probability at least $\frac{1}{2}$. By writing that the event holds with \emph{sufficient probability} we mean that it holds with probability at least $\frac{99}{100}$. Additionally, in many places we work conditionally on the size of the set $|I|$.
\myparagraph{Controlling $\abs{I}$}
The result follows from Chebyshev's inequality since $\abs{I}$ follows the Binomial distribution with parameters $n, d^{-1/2}$.

\myparagraph{Bound on $\norm{\zeta}^{2}$}
Recalling \eqref{eq:bad-covariance-definition} and \eqref{eq:c-and-zeta-definitions} we may rewrite $v_i = \sum\limits_{j = 1}^{|I|}v_{i, j}$, where $v_{i, j}$ has a Bernoulli distribution with parameter $d^{-1/2}$. Moreover, for any fixed $i$ we have that $v_{i, 1}, \ldots, v_{i, |I|}$ are independent and for any $j$ it holds that $\sum\limits_{i = 1}^d v_{i, j} = d^{1/2}$.  Combining these facts we have
\begin{align*}
\norm{\zeta}^2 &= \sum\limits_{i = 1}^{d}\left(\sum\limits_{j = 1}^{|I|}\left(v_{i, j} - d^{-1/2}\right)\right)^2 
\\
&= \sum\limits_{i = 1}^{d}\sum\limits_{j = 1}^{|I|}\left(v_{i, j} - d^{-1/2}\right)^2 + \sum\limits_{i = 1}^{d}\sum\limits_{j\neq k}^{|I|}\left(v_{i, j} - d^{-1/2}\right)\left(v_{i, k} - d^{-1/2}\right)
\\
&=d^{1/2}|I| - |I| + \sum\limits_{i = 1}^{d}\sum\limits_{j\neq k}^{|I|}\left(v_{i, j} - d^{-1/2}\right)\left(v_{i, k} - d^{-1/2}\right).
\end{align*}
We proceed with analysis of the zero mean sum $\sum\limits_{i = 1}^{d}\sum\limits_{j\neq k}^{|I|}\left(v_{i, j} - d^{-1/2}\right)\left(v_{i, k} - d^{-1/2}\right)$.
Observe that for any given $j$ the values $v_{1, j}, \ldots, v_{d, j}$ are not independent but are sampled with replacement. However, it is possible to avoid this problem using a direct computation.
First, for $i_1 \neq i_2$ and any $j$ we have
\begin{align*}
\E \left(v_{i_1, j} - d^{-1/2}\right)\left(v_{i_2, j} - d^{-1/2}\right) &= \E v_{i_1, j}v_{i_2, j} - d^{-1} = \frac{d^{1/2}}{d}\cdot\frac{d^{1/2} - 1}{d - 1} - d^{-1}
\\
&= -\frac{1}{d^{3/2} + d}.
\end{align*}
This implies the following correlation identity
\begin{align*}
&\E\left(\sum\limits_{j\neq k}^{|I|}\left(v_{i_1, j} - d^{-1/2}\right)\left(v_{i_1, k} - d^{-1/2}\right)\right)\left(\sum\limits_{j\neq k}^{|I|}\left(v_{i_2, j} - d^{-1/2}\right)\left(v_{i_2, k} - d^{-1/2}\right)\right)
\\
&=\E\left(\sum\limits_{j\neq k}^{|I|}\left(v_{i_1, j} - d^{-1/2}\right)\left(v_{i_1, k} - d^{-1/2}\right)\left(v_{i_2, j} - d^{-1/2}\right)\left(v_{i_2, k} - d^{-1/2}\right)\right)
\\
&= \frac{|I|^2 - |I|}{(d^{3/2} + d)^2}.
\end{align*}
The last identity leads to
\begin{align*}
&\E\left(\sum\limits_{i = 1}^{d}\sum\limits_{j\neq k}^{|I|}\left(v_{i, j} - d^{-1/2}\right)\left(v_{i, k} - d^{-1/2}\right)\right)^2
\\
&=\sum\limits_{i = 1}^{d}\E\left(\sum\limits_{j\neq k}^{|I|}\left(v_{i, j} - d^{-1/2}\right)\left(v_{i, k} - d^{-1/2}\right)\right)^2 + \left(d^2 - d\right)\frac{|I|^2 - |I|}{(d^{3/2} + d)^2}
\\
&= d(|I|^2 - |I|)\left(d^{-1/2}(1 - d^{-1/2})\right)^2 + \left(d^2 - d\right)\frac{|I|^2 - |I|}{(d^{3/2} + d)^2} \le 2|I|^2.
\end{align*}
Finally, using Chebyshev's inequality we have $\norm{\zeta}^2 \sim \abs{I}d^{1/2} \sim n$ with sufficient probability.

\myparagraph{Invertibility of $A$}
Observe that $A$ is a sum $|I|$ independent positive semi-definite random matrixes such that each summand has the operator norm equaling one.
Using the lower tail of the matrix Chernoff bound \citep[Theorem 5.1.1]{tropp15} we have
\[
\Pr(\lambda_{d}(A) \le \lambda_d(\E A)/2) \le d(2e^{-1})^{\abs{I}(d^{-1} + (d^{3/2} + d)^{-1})/2},
\]
which is arbitrary small for large enough $d$ and $n \gtrsim d^{3}\log d$. Finally, observe that 
\[
\lambda_{d}(\E A) = \lambda_{d}\left(\abs{I} \left((d^{-1} - (d^{3/2} + d)^{-1})I_{d} + (d^{3/2} +
  d)^{-1}\mathbf{1}\mathbf{1}^{\mathsf{T}}\right)\right) \sim |I|d^{-1}.
\]
Therefore, $\lambda_{d}(A) > 0$ with sufficient probability and $A$ is invertible.
\myparagraph{A lower bound on $\lambda_{1}(A)$}
By \eqref{eq:c-and-zeta-definitions}
we have
$
  \mathbf{1}^{\mathsf{T}}A\mathbf{1}
  =\mathbf{1}^{\mathsf{T}}(\abs{I}d^{-1/2}\mathbf{1} + \zeta)
  =\abs{I}d^{-1/2}\norm{\mathbf{1}}^{2},
$
which shows that 
\begin{equation}
\label{eq:lambdaonelowerbound}
\lambda_1(A) \geq \abs{I}d^{-1/2}.
\end{equation}

\myparagraph{An upper bound on $\lambda_{2}(A)$}
We need the following bound which states what with sufficient probability
\begin{equation}
\label{eq:lambdatwobound}
\lambda_2(A) \lesssim |I|d^{-1}.
\end{equation}
  By the Courant-Fischer theorem we have
  \[
  \lambda_{2}(A) = \inf\limits_{v}\sup\limits_{x \in S^{d - 1}, \langle x, v\rangle = 0}x^{\mathsf{T}}Ax \le \sup\limits_{x \in S^{d - 1}, \langle x, \mathbf{1}\rangle = 0}x^{\mathsf{T}}Ax.
  \]
  Consider the $d \times d$ partial isometry matrix $R$ defined as follows. Fix an orthonormal basis $w_{1}, \ldots, w_{d}$ in $\mathbb{R}^{d}$ such that $w_{1}$ is proportional to $\mathbf{1}$. The matrix $R$ has its first row equal to zero and its $i$-th row for $i \ge 2$ equal to $w_i$. Observe that $R\; \mathbf{1} = 0$ and for any $v$ such that $\langle u, \mathbf{1}\rangle = 0$ we have $\|Ru\| = \|u\|$ together with $RR^{\mathsf{T}} = I_{d} - e_1e_1^{\mathsf{T}}$. Next, we show
  \begin{equation}
  \label{eq:detailed}
  \sup\limits_{x \in S^{d - 1}, \langle x, \mathbf{1}\rangle = 0}x^{\mathsf{T}}Ax = \sup\limits_{x \in S^{d - 1}}x^{\mathsf{T}}RAR^{\mathsf{T}}x.
  \end{equation}
  Indeed, consider a maximizer $x_0 \in S^{d - 1}$ of the right-hand side. We have that $R^{\mathsf{T}}x_0$ is orthogonal to $\mathbf{1}$ since $\mathbf{1}^{\mathsf{T}}R^{\mathsf{T}}x_0 = (R\; \mathbf{1})^{\mathsf{T}}x_0 = 0$. Finally, we have that for any $x^\prime \in S^{d - 1}$ such that $\langle x^{\prime}, \mathbf{1}\rangle = 0$ there is $x \in S^{d - 1}$ such that $R^{\mathsf{T}}x = x^{\prime}$. This is because $x^{\prime} = \alpha_{2}w_2 + \ldots \alpha_d w_d = R^{\mathsf{T}} x$, where $x^{\mathsf{T}} = (0, \alpha_2, \ldots, \alpha_d) \in S^{d - 1}$. Therefore, \eqref{eq:detailed} follows.

Further, the matrix $RAR^{\mathsf{T}}$ is non-negative semi-definite as well as each
additive term that forms it. We have
\[
RAR^{\mathsf{T}} = \sum_{i \in I} RX_{i}X_{i}^{\mathsf{T}}R^{\mathsf{T}}
\]
and for the operator norm we have
$\|RX_{i}X_{i}^{\mathsf{T}}R^{\mathsf{T}}\| \le \|R\|\|X_iX_i^{\mathsf{T}}\| \|R^{\mathsf{T}}\| =
\norm{R}\norm{R^{\mathsf{T}}}\le 1$.
Note that
\begin{equation}
\E RAR^{\mathsf{T}} = R\E \left(A\right) R^{\mathsf{T}}
= \abs{I}R\left((d^{-1} - (d^{3/2} + d)^{-1})I_{d} + (d^{3/2} +
  d)^{-1}\mathbf{1}\mathbf{1}^{\mathsf{T}}
  \right)R^{\mathsf{T}}.
\end{equation}
Using $R\,\mathbf{1} = 0$, the above simplifies to
\begin{equation}
\E RAR^{\mathsf{T}} = R\E \left(A\right) R^{\mathsf{T}}
= \abs{I}(d^{-1} - (d^{3/2} + d)^{-1})RR^{\mathsf{T}}.
\end{equation}
Since $RR^{\mathsf{T}} = I_{d} - e_{1}e_{1}^{\mathsf{T}}$, we have
$\lambda_{1}(\E RAR^{\mathsf{T}}) = \abs{I}(d^{-1} - (d^{3/2} + d)^{-1})$.
Applying the matrix Chernoff inequality \citep[Theorem 5.1.1]{tropp15}
we obtain
\begin{align}
\Pr\left(
  \lambda_{2}(A) \geq 2\abs{I}(d^{-1} - (d^{3/2} + d)^{-1})
\right)
&\leq
\Pr\left(
  \lambda_{1}(RAR^{\mathsf{T}}) \geq 2\abs{I}(d^{-1} - (d^{3/2} + d)^{-1})
\right)
\\
&\leq
d(e/4)^{\abs{I}(d^{-1} + (d^{3/2} + d)^{-1})}.
\label{eq:upperboundlambdatwo}
\end{align}
The above probability is arbitrary small for large enough $d$ and $n \gtrsim d^{3}\log d$. The bound follows.

\myparagraph{A lower bound on $\zeta^{\mathsf{T}}A^{-1}\zeta$} Let $u_1, \ldots, u_d$ be an orthonormal basis of eigenvectors of $A$. Using the spectral decomposition and $\sum_{i=1}^{d}\ip{u_{i}}{\zeta}^{2} = \|\zeta\|^{2}$, we have
\begin{equation}
  \label{eq:zeta-quadratic-form-lower-bound}
  \zeta^{\mathsf{T}}A^{-1}\zeta
  = \sum_{i=1}^{d} \ip{u_i}{\zeta}^{2}\lambda_i(A)^{-1}
  \geq \lambda_2(A)^{-1}\sum_{i=2}^{d} \ip{u_i}{\zeta}^{2}
  = \lambda_2(A)^{-1}(\norm{\zeta}^{2} - \ip{u_1}{\zeta}^{2}).
\end{equation}
By~\eqref{eq:lambdatwobound} we have
$\lambda_2(A) \lesssim nd^{-3/2}$ and by above computations $\norm{\zeta}^2 \sim n$. Therefore, the claim immediately follows if we prove that $\ip{u_1}{\zeta}^{2} \ll n$. Note that
\[
  \lambda_1(A)\ip{u_1}{\zeta}
  =\ip{Au_1}{\zeta}
  =\ip{(A - \E A)u_1}{\zeta} + \ip{(\E A)u_1}{\zeta}
 \]
  implies
  \begin{equation}
  \label{eq:u1-zeta-upper-bound}
   \ip{u_1}{\zeta}^{2}
  \le \lambda_1(A)^{-2}
    (\norm{A - \E A}\norm{\zeta} + |\ip{(\E A) u_1}{\zeta}|)^{2}.
  \end{equation}
Recall that
\[
  \E A = |I|\left((d^{-1} - (d^{3/2} + d)^{-1})I_{d} + (d^{3/2} + d)^{-1}\mathbf{1}\mathbf{1}^{\mathsf{T}}\right)
  \quad\text{and}\quad
  \ip{\mathbf{1}}{\zeta} = 0.
\]
We have
\[
  |\ip{(\E A) u_{1}}{\zeta}|
  = \abs{I}(d^{-1} + (d^{3/2} + d)^{-1})|\ip{u_1}{\zeta}| \le \abs{I}(d^{-1} + (d^{3/2} + d)^{-1})\|\zeta\|.
\]
Using that $(X_{i}X_{i}^{\mathsf{T}})^{2} = X_{i}X_{i}^{\mathsf{T}}$ for $i \in I$, we have
\[
\left\|\sum_{i=1}^{\abs{I}} \E\left(X_{i}X_{i}^{\mathsf{T}} -
  \E X_{i}X_{i}^{\mathsf{T}}\right)^{2}\right\| \le \left\|\sum_{i=1}^{\abs{I}} \E\left(X_{i}X_{i}^{\mathsf{T}}\right)^{2}\right\| = \norm{\E A} \le 2\abs{I}d^{-1/2}.
\]
Applying the matrix Bernstein inequality
\citep[Theorem 6.6.1]{tropp15} we obtain
\begin{equation}
\label{eq:matrixbernstein}
  \Pr\left(\norm{A - \E A} \geq \abs{I}d^{-1}\right)
  \leq d \exp\left(-\frac{\abs{I}^{2}d^{-2}/2}{2\abs{I}d^{-1/2} + d/3}\right),
\end{equation}
where the above probability is arbitrary small for large enough $d$ and $n \gtrsim d^{3}\log d$.
Hence,~\eqref{eq:u1-zeta-upper-bound} gives with sufficient probability
$$
  \ip{u_1}{\zeta}^{2}
  \leq 2\lambda_1(A)^{-2}(\norm{A - \E A}^{2} + 2\abs{I}^{2}d^{-2})\norm{\zeta}^{2}
  \lesssim \lambda_1(A)^{-2}\abs{I}^{2}d^{-2}\norm{\zeta}^{2} \lesssim  n/d.
$$
The claim follows.

\myparagraph{An upper bound on $\mathbf{1}^{\mathsf{T}}A^{-1}\mathbf{1}$} As before let $u_1, \ldots, u_d$ be an orthonormal basis of eigenvectors of $A$.
Using the lower bound \eqref{eq:lambdaonelowerbound} and $\sum_{i=2}^{d}\ip{u_{i}}{\mathbf{1}}^{2} = d - \ip{u_{1}}{\mathbf{1}}^{2}$, we have
\begin{align}
  \label{eq:quadratic-form-general-expression}
  \mathbf{1}^{\mathsf{T}}A^{-1}\mathbf{1}
  &=
  \sum_{i=1}^{d}\ip{u_{i}}{\mathbf{1}}^{2}\lambda_i(A^{-1}) \nonumber
  \\
  &\le \ip{u_1}{\mathbf{1}}^{2}/\lambda_1(A)
    + (d - \ip{u_1}{\mathbf{1}}^{2})/\lambda_d(A) \le d^{3/2}/|I|+ (d - \ip{u_1}{\mathbf{1}}^{2})/\lambda_d(A).
\end{align}
We want to provide an upper bound on $d - \ip{u_1}{\mathbf{1}}^{2}$. By \eqref{eq:upperboundlambdatwo} we have that with sufficient probability $\lambda_{j}(A) \le 2|I|d^{-1}$ for $j = 2, \ldots, d$ and therefore, for the same values of $j$ we have $\lambda_{j}(A)/\lambda_1(A) \le 2d^{-1/2}$. Using the last fact we have
\[
\frac{\mathbf{1}^{\mathsf{T}}A\mathbf{1}}{\lambda_1(A)}
  = 
  \ip{u_{1}}{\mathbf{1}}^{2} + \sum_{i=2}^{d}\ip{u_{i}}{\mathbf{1}}^{2}\frac{\lambda_i(A)}{\lambda_1(A)}
  \le \ip{u_1}{\mathbf{1}}^{2}+  2d^{-1/2}(d - \ip{u_1}{\mathbf{1}}^{2}).
\]
The last inequality combined with the fact that $\mathbf{1}$ is the first eigenvector of $\E A$ implies for $d \ge 16$,
\begin{equation}
\label{eq:boundondiff}
d - \ip{u_1}{\mathbf{1}}^{2} \le (1 - 2d^{-1/2})^{-1}\left(d - \frac{\mathbf{1}^{\mathsf{T}}A\mathbf{1}}{\lambda_1(A)}\right) \le 2\left(\frac{\mathbf{1}^{\mathsf{T}}\E A\mathbf{1}}{\lambda_1(\E A)} - \frac{\mathbf{1}^{\mathsf{T}}A\mathbf{1}}{\lambda_1(A)}\right)
\end{equation}
Finally, since $|\lambda_{1}(A) - \lambda_{1}(\E A)| \le \|A - \E A\|$ and by the lower bound \eqref{eq:lambdaonelowerbound} we have
\begin{align}
\frac{\mathbf{1}^{\mathsf{T}}\E A\mathbf{1}}{\lambda_1(\E A)} - \frac{\mathbf{1}^{\mathsf{T}}A\mathbf{1}}{\lambda_1(A)} &= \mathbf{1}^{\mathsf{T}}\E A\mathbf{1}\left(\frac{1}{\lambda_1(\E A)} - \frac{1}{\lambda_1(A)}\right) + \frac{\mathbf{1}^{\mathsf{T}}(\E A - A)\mathbf{1}}{\lambda_1(A)}
\\
&\le \frac{2d\|A - \E A\|}{\lambda_{1}(A)} \le \frac{2d^{3/2}\|A - \E A\|}{|I|}.\label{eq:boundondifffinal}
\end{align}
By \eqref{eq:matrixbernstein}, we have that with sufficient probability $\|A - \E A\| \lesssim |I|d^{-1}$. Therefore, combining this with \eqref{eq:boundondiff} we have with sufficient probability 
\[
d - \ip{u_1}{\mathbf{1}}^{2}  \lesssim \sqrt{d}.
\]
Plugging the above inequality into \eqref{eq:quadratic-form-general-expression} and using our lower bound $\lambda_{d}(A) \gtrsim |I|d^{-1}$ we prove the claim.
\myparagraph{An upper bound on $\mathbf{1}^{\mathsf{T}}A^{-2}\mathbf{1}$}
The proof is completely analogous to the case $\mathbf{1}^{\mathsf{T}}A^{-1}\mathbf{1}$. We have
\begin{align*}
  \mathbf{1}^{\mathsf{T}}A^{-2}\mathbf{1}
  &\le \ip{u_1}{\mathbf{1}}^{2}/(\lambda_1(A))^2
    + (d - \ip{u_1}{\mathbf{1}}^{2})/(\lambda_d(A))^2
    \\
    &\le d^{2}/|I|^2+ (d - \ip{u_1}{\mathbf{1}}^{2})/(\lambda_d(A))^2.
\end{align*}
As before, with sufficient probability we have $d^{2}/|I|^2 \lesssim n^{-2}d^3$. The only difficulty is that we need a slightly sharper variant of the upper bound on $\|A - \E A\|$. Recalling the bound \eqref{eq:matrixbernstein}, by the matrix Bernstein inequality \citep[Theorem 6.6.1]{tropp15} we have
\[
 \Pr\left(\norm{A - \E A} \geq \abs{I}d^{-3/2}\right)
 \leq d \exp\left(-\frac{\abs{I}^{2}d^{-3}/2}{4\abs{I}d^{-1/2} + d/3}\right),
\]
which is arbitrarily small provided that $d$ is large enough and $n \gtrsim d^{3}\log d$. Observe that this is the step where we have our strongest requirement on $n$. Note that using that by matrix Chernoff inequality, as shown above in the proof that $A$ is invertible, we have with sufficient probability:
$$
  \lambda_{d}(A) \gtrsim \abs{I}d^{-1}.
$$
Using \eqref{eq:boundondiff}, \eqref{eq:boundondifffinal} and the two inequalities above, we conclude that the following holds with sufficient probability:
\[
    \frac{d - \ip{u_1}{\mathbf{1}}^{2}}{\lambda_d(A)^2}
    \lesssim \frac{d^{3/2}\|A - \E A\| / \abs{I}}{\abs{I}^{2}d^{-2}}
    \lesssim \frac{d^{7/2}\abs{I}d^{-3/2} }{\abs{I}^{3}}
    = \frac{d^{2}}{\abs{I}^{2}}
    \lesssim \frac{d^{3}}{n^{2}}.
\]
The proof of our result is complete.
 \qed

\section{Proof of the Theorem by Forster and Warmuth}
\label{section:proof-of-optimal-bound}

For the sake of completeness, in this section we present the leave-one-out analysis due to \citep{forster2002relative}. Following the notation used in the proof of Theorem~\ref{thm:main_thm_second}, we introduce an additional independent element $(X_{n + 1}, Y_{n + 1})$ distributed according to $P$ to the sample $S_{n+1}$. With a slight abuse of notation, we define for $j = 1, \ldots, n + 1$ the unnormalized and unpenalized (as opposed to \eqref{eq:samplecov}) empirical second moment matrices by
\[
\tilde{\Sigma} = \sum\limits_{i = 1}^{n + 1}X_iX_i^{\mathsf{T}} \quad
\text{and} \quad \tilde{\Sigma}^{(-j)} = \sum\limits_{i = 1, i \neq j}^{n + 1}X_iX_i^{\mathsf{T}}.
\]
Given the sample $S_{n+1}$, fix the minimum $\ell_{2}$ norm ERM defined as follows:
$$
  \what_{\infty} = \tilde{\Sigma}^{\dagger}\Big(\sum_{i=1}^{n+1}Y_{i}X_{i}\Big),
$$
where recall that $\tilde{\Sigma}^{\dagger}$ denotes the Moore-Penrose inverse of the matrix $\tilde{\Sigma}$. For $i=1,\dots, n+1$, let
$h_{i}$ denote the $i$-th leverage score:
$$
    h_{i} = X_{i}^{\mathsf{T}}\tilde{\Sigma}^{\dagger}X_{i}.
$$
Let $\widehat{f}^{(-j)}$ denote the non-linear estimator trained on the $n$ samples $S_{n+1} \setminus \{ (X_{j}, Y_{j}) \}$. By the definition of $\widehat{f}^{(-j)}$, we have
\begin{align}
    \widehat{f}^{(-j)}(X_{j})
    &=
    (1 - h_{j})
    \left\langle \left(\tilde{\Sigma}^{(-j)} + X_{j}X_{j}^{\mathsf{T}}\right)^{\dagger}\left(\sum_{i=1, i\neq     j}^{n+1}Y_{i}X_{i}\right), X_{j}
    \right\rangle \\
    &= (1-h_{j})\left\langle\tilde{\Sigma}^{\dagger}\left(\sum_{i=1}^{n+1}Y_{i}X_{i}\right) - \tilde{\Sigma}^{\dagger}Y_{j}X_{j}, X_{j} \right\rangle \\
    &= (1-h_{j})(\langle \what_{\infty}, X_{j} \rangle - h_{j}Y_{j}). \label{eq:non-linear-predictor-prediction}
\end{align}

The analysis in \citep{forster2002relative} presented below is akin to the one used in our ridge regression proof (cf.\ Appendix~\ref{sec:secondmainthm}), albeit with one simplifying modification. Instead of decomposing the excess risk into the stability and fitting terms, we use the following \emph{leave-one-out} decomposition:
\begin{align}
    &\E_{S_{n+1}} \left(\widehat{f}^{(-(n+1))}(X_{n+1}) - Y_{n+1}\right)^{2}
      - \inf_{\omega \in \R^{d}} R(\omega) \\
    &= \E_{S_{n+1}} \left( \frac{1}{n+1}\sum_{i=1}^{n+1} \left(\widehat{f}^{(-i)}(X_{i}) - Y_{i}\right)^{2} \right)
      - \inf_{\omega \in \R^{d}} \E_{S_{n+1}} \left(\frac{1}{n+1} \sum_{i=1}^{n} \left(\langle \omega, X_{i} \rangle - Y_{i}\right)^{2}\right) \\
    &\leq \E_{S_{n+1}} \left( \frac{1}{n+1}\sum_{i=1}^{n+1} \left(\widehat{f}^{(-i)}(X_{i}) - Y_{i}\right)^{2} \right)
      - \E_{S_{n+1}} \inf_{\omega \in \R^{d}} \left(\frac{1}{n+1} \sum_{i=1}^{n} \left(\langle \omega, X_{i} \rangle - Y_{i}\right)^{2}\right) \\
    &= \E_{S_{n+1}} \left( \frac{1}{n+1}\sum_{i=1}^{n+1} \left(\widehat{f}^{(-i)}(X_{i}) - Y_{i}\right)^{2} \right)
      - \E_{S_{n+1}} \left(\frac{1}{n+1} \sum_{i=1}^{n} \left(\langle \what_{\infty}, X_{i} \rangle - Y_{i}\right)^{2}\right) \\
    &= \E_{S_{n+1}} \left( \frac{1}{n+1}\sum_{i=1}^{n+1} \left(\widehat{f}^{(-i)}(X_{i}) - Y_{i}\right)^{2} 
      - \left(\langle \what_{\infty}, X_{i} \rangle - Y_{i}\right)^{2} \right) \label{eq:loo-decomposition}.
\end{align}
Plugging in \eqref{eq:non-linear-predictor-prediction} into the above summands, we obtain
\begin{align*}
    &\left(\widehat{f}^{(-i)}(X_{i}) - Y_{i}\right)^{2}
      - \left(\langle \what_{\infty}, X_{i} \rangle - Y_{i}\right)^{2} \\
    &=\left((1 - h_{j})\langle \what_{\infty}, X_{j} \rangle - (1 + h_{j} - h_{j}^{2}) Y_{i}\right)^{2}
      - \left(\langle \what_{\infty}, X_{i} \rangle - Y_{i}\right)^{2} \\
    &= \left((1 - h_{j})^{2} - 1 \right)\langle \what_{\infty}, X_{j} \rangle^{2}
     - 2\left((1 - h_{j})(1 + h_{j} - h_{j}^{2}) - 1 \right)\langle \what_{\infty}, X_{j} \rangle Y_{j}
     \\
     &\quad\quad+ ((1 + h_{j} - h_{j}^{2})^{2} - 1)Y_{j}^{2}.
\end{align*}
If $h_{j} = 0$, then the above expression is equal to zero. Assume that $h_{j} > 0$ (hence, $h_{j} \in (0, 1]$).
Then, the coefficient preceding $\langle \what_{\infty}, X_{j} \rangle^{2}$ is negative; optimizing the quadratic equation we have
\begin{align}
    &\left((1 - h_{j})^{2} - 1 \right)\langle \what_{\infty}, X_{j} \rangle^{2}
     - 2\left((1 - h_{j})(1 + h_{j} - h_{j}^{2}) - 1 \right)\langle \what_{\infty}, X_{j} \rangle Y_{j}
      \\
      &\quad\quad+ ((1 + h_{j} - h_{j}^{2})^{2} - 1)Y_{j}^{2}\\
    &\leq \frac{\left((1 - h_{j})(1 + h_{j} - h_{j}^{2}) - 1 \right)^{2}
    }{1 - (1 - h_{j})^{2}}Y_{j}^{2}
    + ((1 + h_{j} - h_{j}^{2})^{2} - 1)Y_{j}^{2} \\
    &= (2h_{j} - h_{j}^{2})Y_{j} \leq 2h_{j}Y_{j}^2.
\end{align}
Plugging the above into \eqref{eq:loo-decomposition} yields:
\begin{align}
    &\E_{S_{n+1}} \left(\widehat{f}^{(-(n+1))}(X_{n+1}) - Y_{n+1}\right)^{2}
      - \inf_{\omega \in \R^{d}} R(\omega) \\
    &\leq \E_{S_{n+1}} \left( \frac{1}{n+1}\sum_{i=1}^{n+1} \left(\widehat{f}^{(-i)}(X_{i}) - Y_{i}\right)^{2} 
      - \left(\langle \what_{\infty}, X_{i} \rangle - Y_{i}\right)^{2} \right) \\
    &\leq \E_{S_{n+1}} \frac{1}{n+1} \sum_{i=1}^{n+1}2h_{i}Y_{i}^{2} \leq \frac{2dm^{2}}{n+1},
\end{align}
where in the last line we used the fact that $\sum\limits_{i = 1}^{n + 1}h_{i} = \operatorname{rank}(\tilde{\Sigma}) \leq d$. The proof is complete. \hfill\qed
\end{appendix}

\end{document}